\documentclass[sn-mathphys]{sn-jnl-id}
\usepackage{latexsym, amsfonts, amsmath, amsthm, amssymb, verbatim, mathrsfs}
%
%
\catcode`\|=12\relax  
\usepackage{tikz, float}

\allowdisplaybreaks

\newcommand{\AC}{{AC}}
\newcommand{\BV}{{BV}}

\newcommand{\Pol}{\mathcal{P}_2}
\newcommand{\Lip}{{\mathrm{Lip}}}

\newcommand{\mN}{\mathbb{N}}
\newcommand{\mR}{\mathbb{R}}
\newcommand{\mC}{\mathbb{C}}
\newcommand{\mT}{\mathbb{T}}
\newcommand{\mZ}{\mathbb{Z}}

\newcommand{\bvreal}{BV_\mR(\sigma)}

\newcommand{\abs}[1]{\left\lvert#1\right\rvert}
\newcommand{\diam}[1]{\mathrm{diam}(#1)}

\newcommand{\vecx}{{\boldsymbol{x}}}
\newcommand{\vecy}{{\boldsymbol{y}}}
\newcommand{\vecz}{{\boldsymbol{z}}}

\newcommand{\vecu}{{\boldsymbol{u}}}
\newcommand{\vecv}{{\boldsymbol{v}}}
\newcommand{\vecw}{{\boldsymbol{w}}}

\newcommand{\norm}[1]{\left\lVert#1\right\rVert}
\newcommand{\norminf}[1]{\left\lVert#1\right\rVert_\infty}
\newcommand{\normbv}[1]{\left\lVert#1\right\rVert_{\BV(\sigma)}}

\renewcommand{\Re}{\mathop{\mathrm{Re}}}
\renewcommand{\Im}{\mathop{\mathrm{Im}}}

\newcommand{\ds}{\displaystyle}
\newcommand{\ts}{\textstyle}
\newcommand\st{\thinspace : \thinspace}
\def\ls[#1,#2]{\overline{\vphantom{\vbox to 1.2 ex{}} #1\, #2}}

\def\smod#1{\Bigl \vert #1 \Bigr \vert}

\def\sparen(#1){\Bigl ( #1 \Bigr )}
\def\ssparen(#1){ (#1) }
\newcommand\plist[1]{\bigl[ #1 \bigr]}

\newcommand{\interior}[1]{\mathop{\mathrm{int}}(#1)}

\DeclareMathOperator*{\myvar}{var}
\DeclareMathOperator{\cvar}{\rm cvar}
\DeclareMathOperator{\vf}{vf}

\DeclareMathOperator{\supp}{supp}

\theoremstyle{thmstyleone}%
 \newtheorem{thm}{Theorem}[section]
 \newtheorem{cor}[thm]{Corollary}
 \newtheorem{lem}[thm]{Lemma}
 \newtheorem{prop}[thm]{Proposition}
  \newtheorem{conj}[thm]{Conjecture}

\theoremstyle{thmstylethree}%
 \newtheorem{defn}[thm]{Definition}

\theoremstyle{thmstyletwo}%
 \theoremstyle{remark}
 \newtheorem{rem}[thm]{Remark}
 \newtheorem{ex}[thm]{Example}

 \numberwithin{equation}{section}

\newcommand{\fmax}{f_{\rm max}}
\newcommand{\fmed}{f_{\rm med}}
\newcommand{\fmin}{f_{\rm min}}
\newcommand{\cmax}{c_{\rm max}}
\newcommand{\cmed}{c_{\rm med}}
\newcommand{\cmin}{c_{\rm min}}

\newcommand{\journalname}[1]{\textrm{#1}}

%
%

%
%

\begin{document}

\title{The Banach algebras $AC(\sigma)$ and $BV(\sigma)$}

\author*[1]{Ian Doust}\email{i.doust@unsw.edu.au}
\author[2]{Michael Leinert}\email{leinert@math.uni-heidelberg.de}
\author[1]{Alan Stoneham}\email{a.stoneham@unsw.edu.au}

\affil*[1]{\orgdiv{School of Mathematics and Statistics},
\orgname{University of New South Wales},
\city{UNSW Sydney} \postcode{2052}, \country{Australia}}

\affil[2]{\orgdiv{Institut f{\"u}r Angewandte Mathematik},
\orgname{Universit{\"a}t Heidelberg},
\orgaddress{Im Neuenheimer Feld 294},
\postcode{D-69120} \city{Heidelberg}
\country{Germany}}

\pacs[MSC Classification]{26B30, 47B40}

\abstract{
The spaces $BV(\sigma)$ and $AC(\sigma)$ were introduced as part of a program to find a general theory which covers both well-bounded operators and trigonometrically well-bounded operators acting on a Banach space. Since their initial appearance it has become clear that the definitions could be simplified somewhat.
 In this paper we give a self-contained exposition of the main properties of these spaces using this simplified approach.}

\maketitle

\section{Introduction}

The classical definition of the variation of a function of one real variable depends strongly on the order of the real line. Different applications have required extensions of this definition to functions of two or more variables, yielding the challenge of how to replace the order property in a suitable way. Already by the 1930s, Clarkson and Adams \cite{CA} had found quite a collection of mutually inequivalent concepts.

In the 1960s, Smart and Ringrose \cite{Sm,Rin1,Rin2} introduced the theory of well-bounded Banach space operators as an analogue of the theory of self-adjoint operators on a Hilbert space, but now allowing operators whose spectral expansions might only converge conditionally. A Banach space operator $T$ is said to be well-bounded if it admits an $AC[a,b]$ functional calculus for some compact interval $[a,b] \subseteq \mR$. It was shown by Cheng and Doust \cite{CD}, for example, that every compact well-bounded operator $T$ has a representation of the form $T = \sum_{j=1}^\infty \lambda_j P_j$ as a norm convergent (but possibly conditionally convergent) combination of projections onto the eigenspaces of $T$. At least on reflexive Banach spaces, general well-bounded operators admit an integral representation reminiscent of that given by the spectral theorem for self-adjoint operators.
The spectral theorem for well-bounded operators on reflexive Banach spaces allows one to extend the $AC[a,b]$ functional calculus to a $BV[a,b]$ functional calculus, in a similar manner to the way in which the $C(\sigma(T))$ functional calculus for a self-adjoint operator can be extended to the algebra of bounded Borel measurable functions on $\sigma(T)$.

A natural problem then was to provide analogues of normal and unitary operators in this context. After some initial work by Ringrose \cite{Rin2}, steps in this direction were taken by Berkson and Gillespie \cite{BG1,BG2} and Wilson \cite{Wil}. Following Hardy and Krause, Berkson and Gillespie defined a Banach algebra $AC(J\times K)$ of `absolutely continuous' functions defined on a rectangle $J \times K \subseteq \mR^2$ and used this to define the class of $AC$ operators. A little later, they defined a corresponding analogue of unitary operators, the class of trigonometrically well-bounded operators based on an algebra $AC(\mT)$ of functions on the unit circle, and used this in work on operator-valued harmonic analysis (see, for example, \cite{BG3,BGM}). It became apparent however that the more general theory of $AC$ operators had some undesirable properties in terms of spectral theory. For example there exist $AC$ operators $T$ such that  $(1+i)T$ is not an $AC$ operator. It is also more natural (even in the case where $\sigma(T) \subseteq \mR$) to look for a functional calculus model based on functions whose domain is $\sigma(T)$ rather than some potentially much larger set.

To address some of these shortcomings, a new measure of variation of a function defined on a compact subset $\sigma$ of the plane was developed by Ashton \cite{Ash} in 2000. This was used to define the Banach algebras $BV(\sigma)$ and $AC(\sigma)$, the spaces of functions of bounded variation and of absolutely continuous functions respectively, and these algebras were used to define the class of $AC(\sigma)$ operators \cite{AD3}. Importantly, this was consistent with the successful theories for well-bounded and trigonometrically well-bounded operators in the cases that $\sigma \subseteq \mR$ and $\sigma \subseteq \mT$.

Since the original development of the theory in \cite{Ash,AD1}, a number of simplifications of the definitions have been found, and several important new observations concerning these definitions have been made.  A number of gaps in some of the earlier arguments were also identified. There is now much more known about the structure of these spaces (see \cite{AS2,AS3,DA,DL}) and new applications have arisen (see, for example, \cite{NM}).
Unfortunately this provides a challenge for readers, as foundational results are somewhat scattered through the literature and were often shown using different (but equivalent) definitions of variation. The aim of this paper is to give a self-contained development of the spaces $BV(\sigma)$ and $AC(\sigma)$, indicating where there are still some gaps in our knowledge about these spaces.

In Section~\ref{preliminaries} we gather together the basic definitions and properties of the Ashton definition of variation in the plane, and the Banach algebra $BV(\sigma)$. This includes some new results on estimating the variation of a function.

Section~\ref{subalgebras} examines some of the important subalgebras of $BV(\sigma)$, including the polynomials in two (real) variables, the Lipschitz functions and the functions which admit a $C^2$ extension to a neighbourhood of $\sigma$. The absolutely continuous functions are defined as the closure of the set of polynomials in two variables. The final section includes some important properties of absolute continuity in this context. In particular it is shown that absolute continuity can be considered to be a `local property' in the sense that a function is absolutely continuous over the whole set $\sigma$ if and only if it is absolutely continuous on a compact neighbourhood of each point of $\sigma$. (This work is continued in \cite{DLS2} where it is shown, for example, that all functions with a $C^1$ extension to a neighbourhood of $\sigma$ lie in $AC(\sigma)$.)

\section{Basic definitions and properties}\label{preliminaries}

For a complex-valued function $f$ defined on a compact interval $[a,b]$, the concept of its variation is well understood. Given a partition $\mathcal{P}=\{a=x_0<x_1<... <x_n=b\}$ of $[a,b]$, the variation of $f$ over $\mathcal{P}$ is defined to be
\begin{equation*} 
   \myvar(f,\mathcal{P})=\sum_{j=1}^n \abs{ f(x_j)-f(x_{j-1})}.
\end{equation*}
The variation of $f$ is then defined as $\myvar(f)=\ds \sup_{\mathcal{P}}\myvar(f,\mathcal{P})$. One proceeds to define the vector space $BV[a,b]$ of functions of bounded variation over $[a,b]$ as
  \[
     BV[a,b]=\{f:[a,b] \to \mC \, : \, \myvar(f)<\infty\}.
  \]
The space $BV[0,1]$ is a large and rich Banach algebra when endowed with the norm $\|f\|_{BV}=\|f\|_\infty+\myvar(f)$. (Note that throughout, $\norm{f}_\infty$ denotes the supremum rather than the essential supremum of $|f|$.)


Suppose now that $\sigma$ is a nonempty compact subset of the plane and that $f: \sigma \to \mC$. The original definition of the variation of $f$ on $\sigma$ in \cite{Ash} and \cite{AD1} involved calculating the variation of $f$ along suitable curves in the plane. Simpler, but equivalent, definitions were subsequently developed which are in many ways more reminiscent of the one-dimensional definition, and we follow that simplified development here. The equivalence of the different definitions is shown in the appendix to \cite{AS3}. The central concept here is that of the variation factor of a list of points.

For the motivating applications in spectral theory, the set $\sigma$ is the spectrum of a bounded operator, and hence is a subset of $\mC$. The definitions however apply equally well to subsets of $\mR^2$ and so we shall freely swap between the two models of the plane.

The \textbf{curve variation of $f$ on a list $S = \plist{\vecx_0,\vecx_1,\dots,\vecx_n}$} of elements of $\sigma$ will be defined to be
\begin{equation*} 
    \cvar(f, S) =  \sum_{i=1}^{n} \abs{f(\vecx_{i}) - f(\vecx_{i-1})}.
\end{equation*}
Unless $f$ is constant, $\cvar(f,S)$ can be made arbitrarily large, so to arrive at a useful measure of variation, one needs to
introduce a quantity called the variation factor of the list $S$. The properties of this quantity are central to the later theory, so the next subsection is devoted to developing these.

\subsection{Variation factors}

Suppose that $S =
\plist{\vecx_0,\vecx_1,\dots,\vecx_n} = [\vecx_j]_{j=0}^n$ is a finite ordered list of points in the plane. 
Note that the elements of such a list do not need to be distinct. To avoid trivialities, we shall for the moment assume that $n \ge 1$.
Let $\gamma_S$ denote the piecewise linear curve joining the points of $S$ in order.

\begin{defn}\label{defn:x-seg}
Suppose that $\ell$ is a line in the plane. We say that $s_j = \ls[\vecx_j,\vecx_{j+1}]$, the line segment joining $\vecx_j$ to $\vecx_{j+1}$, is a \textbf{crossing segment} of $S$ on $\ell$ if any one of the following hold:
\begin{enumerate}
  \item $\vecx_j$ and $\vecx_{j+1}$ lie on (strictly) opposite sides of $\ell$;
  \item $j=0$ and $\vecx_j \in \ell$;
  \item $\vecx_j \notin \ell$ and $\vecx_{j+1} \in \ell$.
\end{enumerate}
\end{defn}

\begin{ex}
Consider the list $S = [\vecx_0,\vecx_1,\dots,\vecx_8]$ and the line $\ell$ shown in Figure~\ref{x-seg}.

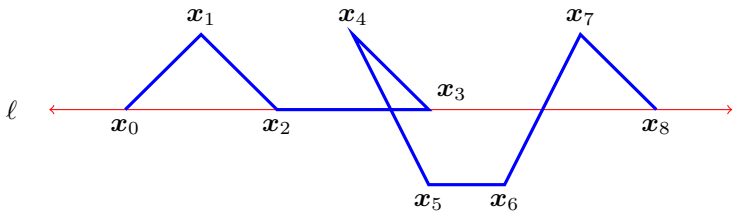
\begin{figure}[h]
\begin{center}
\begin{tikzpicture}
\draw[red,<->] (0,0) -- (9,0);
\draw[blue,very thick] (1,0) -- (2,1) -- (3,0) -- (5,0) -- (4,1) -- (5,-1) --(6,-1) -- (7,1) -- (8,0);
\draw (-0.3,0) node[left] {$\ell$};
\draw (1,0) node[below] {$\vecx_0$};
\draw (2,1) node[above] {$\vecx_1$};
\draw (3,0) node[below] {$\vecx_2$};
\draw (5.3,0) node[above] {$\vecx_3$};
\draw (4,1) node[above] {$\vecx_4$};
\draw (5,-1) node[below] {$\vecx_5$};
\draw (6,-1) node[below] {$\vecx_6$};
\draw (7,1) node[above] {$\vecx_7$};
\draw (8,0) node[below] {$\vecx_8$};
\end{tikzpicture}
\caption{Crossing segments.}\label{x-seg}
\end{center}
\end{figure}

There are five crossing segments of $S$ on $\ell$, namely $\
\ls[\vecx_0,\vecx_1]$ (type 2), $\ls[\vecx_1,\vecx_2]$ (type 3), $\ls[\vecx_4,\vecx_5]$ and $\ls[\vecx_6,\vecx_7]$ (type 1), and $\ls[\vecx_7,\vecx_8]$ (type 3).
\end{ex}
\bigskip

Let $\vf(S,\ell)$ denote the number of crossing segments of $S$ on $\ell$.  An alternative way of calculating $\vf(S,\ell)$ was given in \cite[Appendix]{AS3}. Parameterize  $\gamma_S$ by $[0,1]$ and set $L(S,\ell) = \{t \in [0,1] \st \gamma_S(t) \in \ell\}$. Let $0= t_0 < t_1 < \dots < t_n = 1$ be such that $\gamma_S(t_j) = \vecx_j$ ($j = 0,1,\dots,n$). For each $j$, $\{t \in [t_j,t_{j+1}] \st \gamma_S(t) \in \ell\}$ is either empty, a singleton set, or else is the whole of $[t_j,t_{j+1}]$. Thus, $L(S,\ell)$ can be written as a finite union of disjoint closed connected sets, $L(S,\ell) = \cup_{i=1}^m [b_i,e_i]$. (We allow here that $[b_i,e_i]$ may be a singleton set.)

\begin{prop}\label{vf-component} $\vf(S,\ell)$ is the number of connected components of $L(S,\ell)$.
\end{prop}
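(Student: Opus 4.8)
The plan is to count connected components of $L(S,\ell)$ by walking along the curve $\gamma_S$ segment by segment, and to show that the number of components increases by exactly one precisely at each crossing segment. Recall from the construction before the proposition that $L(S,\ell) = \bigcup_{i=1}^m [b_i,e_i]$ is a disjoint union of closed connected sets listed in increasing order, and that on each parameter interval $[t_j,t_{j+1}]$ (corresponding to the segment $s_j = \ls[\vecx_j,\vecx_{j+1}]$) the set $\{t \in [t_j,t_{j+1}] \st \gamma_S(t) \in \ell\}$ is empty, a singleton, or all of $[t_j,t_{j+1}]$. First I would set up the induction: let $N_k$ be the number of connected components of $L([\vecx_0,\dots,\vecx_k],\ell)$, i.e. the number of components of $L(S,\ell) \cap [0,t_k]$, and let $c_k$ be the number of crossing segments among $s_0,\dots,s_{k-1}$. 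The claim is $N_k = c_k$ for all $k$, and the proposition is the case $k=n$.

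For the base case $k=1$ there is a single segment $s_0 = \ls[\vecx_0,\vecx_1]$, and one checks directly against Definition~\ref{defn:x-seg}: $L$ on $[t_0,t_1]$ is nonempty (one component) exactly when $\vecx_0 \in \ell$, or $\vecx_1 \in \ell$, or $\vecx_0,\vecx_1$ are on opposite sides — and $s_0$ is a crossing segment (type 1, 2, or 3) in exactly those cases, while if $s_0$ is not crossing then neither endpoint is on $\ell$ and the segment does not cross $\ell$, so $L$ is empty. For the inductive step, passing from $[\vecx_0,\dots,\vecx_k]$ to $[\vecx_0,\dots,\vecx_{k+1}]$ adds the segment $s_k$; I would do a case analysis on the position of $\vecx_k$ and $\vecx_{k+1}$ relative to $\ell$. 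The key bookkeeping point is whether the point $t_k$ (where $\gamma_S(t_k) = \vecx_k$) already lies in $L$ of the shorter list: it does iff $\vecx_k \in \ell$. If $\vecx_k \in \ell$, then $k \ge 1$ so $s_k$ is a crossing segment only via type 1 — impossible since $\vecx_k \in \ell$ — so $s_k$ is \emph{not} a crossing segment; and indeed adding $s_k$ either extends the existing rightmost component (if $\vecx_{k+1}\in\ell$, giving a whole interval of new points attached at $t_k$) or adds nothing new to the right of $t_k$ (if $\vecx_{k+1}\notin\ell$, the only point of $s_k$ on $\ell$ is $\vecx_k$ itself, already counted), so $N_{k+1} = N_k = c_k = c_{k+1}$. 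If $\vecx_k \notin \ell$, then $t_k \notin L$ of the old list, so any new points contributed by $s_k$ lie strictly to the right of the old component structure and form at most one new component; this new component is nonempty iff $\vecx_{k+1}\in\ell$ (type 3) or $\vecx_k,\vecx_{k+1}$ on opposite sides (type 1) — exactly the condition that $s_k$ is a crossing segment — giving $N_{k+1} = N_k + 1 = c_k + 1 = c_{k+1}$ in the crossing case and $N_{k+1} = N_k = c_{k+1}$ otherwise.

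The main obstacle, and the only genuinely delicate point, is verifying that a new component contributed by $s_k$ cannot merge with a previously counted component — equivalently, that when $\vecx_k \notin \ell$ the parameter point $t_k$ genuinely separates the old set from the new contribution. This follows because $\gamma_S(t_k) = \vecx_k \notin \ell$ means $t_k \notin L(S,\ell)$, so $L(S,\ell) \cap [0,t_k]$ and $L(S,\ell) \cap [t_k, t_{k+1}]$ are separated by the point $t_k$ and hence their components do not amalgamate; one must also note in the case $\vecx_k \in \ell$, $\vecx_{k+1} \in \ell$ that the newly-added interval $[t_k,t_{k+1}]$ attaches to the component containing $t_k$ without creating a new one, which requires observing that $t_k$ was already in $L$. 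I would also handle the degenerate possibility of repeated consecutive points ($\vecx_j = \vecx_{j+1}$, so $t_j = t_{j+1}$), where the segment is a single point: such a "segment" contributes no new parameter values and is a crossing segment only in the type 2 case $j=0$, $\vecx_0 \in \ell$, which is consistent with the count since that single point is then the first component. Assembling these cases gives $N_n = c_n = \vf(S,\ell)$, which is the assertion.
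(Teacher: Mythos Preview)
Your proof is correct and takes a somewhat different route from the paper's. The paper argues by a direct bijection: it lets $B = \{b_i\}$ be the set of left endpoints of the connected components of $L(S,\ell)$ and observes that $b \in (t_j,t_{j+1})$ exactly when $s_j$ is a type-(1) crossing, $b = t_0$ exactly when $s_0$ is type-(2), and $b = t_{j+1}$ exactly when $s_j$ is type-(3); since each half-open interval $[t_j,t_{j+1})$ (and the singleton $\{t_n\}$) can contain at most one element of $B$, the count matches. Your inductive walk along the list, showing that the component count and crossing count increment together at each segment, is the same case analysis unrolled dynamically. The paper's bijection is more compact; your induction makes the ``no merging'' issue (which you correctly flag as the delicate point) explicit, whereas in the paper it is implicit in the phrase ``left endpoint''.

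One small correction: in the degenerate case $\vecx_j = \vecx_{j+1}$, the paper's parametrization still has $t_j < t_{j+1}$ (the $t_j$ are strictly increasing by construction), so such a segment contributes the whole interval $[t_j,t_{j+1}]$ to $L$ when $\vecx_j \in \ell$, not a single point. This does not disturb your argument --- that interval attaches at $t_j$ to the existing component and creates no new one, exactly as in your Case~1 --- but your parenthetical ``so $t_j = t_{j+1}$'' should be dropped.
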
  

\begin{proof}
With the notation above, let $B = \{b_i\}_{i=1}^m$ be the set of left-hand endpoints of the connected components of $L(S,\ell)$.  Suppose $b \in B$, and that $0 \le j < n$. Then
\begin{enumerate}
\item $b \in (t_j,t_{j+1})$ if and only if $\ls[\vecx_j,\vecx_{j+1}]$ is a crossing segment of type (1).
\item $b = t_0$ if and only if $\ls[\vecx_0,\vecx_{1}]$ is a crossing segment of type (2).
\item $b = t_{j+1}$ if and only if $\ls[\vecx_j,\vecx_{j+1}]$ is a crossing segment of type (3).
\end{enumerate}
Since each interval $[t_j,t_{j+1})$ (and the final point ${t_n}$) can contain at most one element of $B$, this implies that there are as many elements of $B$ (that is, as many connected components of $L(S,\ell)$) as there are crossing segments of $S$ on $\ell$.
\end{proof}

The \textbf{variation factor} of $S$ is defined to be
 \[ \vf(S) = \max_{\ell} \vf(S,\ell). \]
For completeness\footnote{In most later proofs we shall not explicitly consider this possibility. The diligent reader will check that that statements are typically trivially true for lists with just one element.} we include the case $S = [\vecx_0]$ by setting $\vf([\vecx_0],\ell) = 1$ if $\vecx_0 \in \ell$ and zero otherwise, and $\vf([\vecx_0]) = 1$.
Note that $\vf(S)$ is well-defined since $1 \le \vf(S) \le n$. Informally, $\vf(S)$ may be thought of as the maximum number of times any line crosses $\gamma_S$.

It is clear that the variation factor is invariant under rotations or translations of the set $S$, and indeed under any invertible affine transformation of the plane.

It will be important below to see what this means in the case that $S \subseteq \mR$. One can parametrize $\gamma_S$ by arc length, $\gamma_S(t)$, $0 \le t \le T$. If $\ell$ is a horizontal line, then $\vf(S,\ell)$ is either zero or one. Otherwise any line $\ell$ will meet the real axis at a unique point $\vecx$ and $\vf(S,\ell)$ counts the number of solutions to $\gamma_S(t) = \vecx$. Thus $\vf(S)$ is counting the largest number of times $\gamma_S$ visits any single point. This will be needed in the proof of Proposition~\ref{varreal} below.

Given a list of points $S$ and a line $\ell$ in the plane we denote by $S_\ell$ the list comprising the orthogonal projections of these points onto $\ell$.

\begin{prop}\label{vf-proj}
Let $S = \plist{\vecx_0,\vecx_1,\dots,\vecx_n}$ be an ordered list of points and let $\ell$ be a line in the plane. Then $\vf(S_\ell) \le \vf(S)$.
\end{prop}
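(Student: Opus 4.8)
The plan is to use $\vf(S_\ell) = \max_m \vf(S_\ell,m)$ and to show that for every line $m$ there is a line $m'$ with $\vf(S_\ell, m) \le \vf(S, m')$; taking the maximum over $m$ then gives $\vf(S_\ell) \le \vf(S)$. The point is that every element of $S_\ell$ lies on $\ell$, so the side of a line $m$ on which such a point lies is governed only by its position along $\ell$. First I would dispose of the lines $m$ that do not meet $\ell$ transversally in a single point: if $m \parallel \ell$ with $m \ne \ell$ then every point of $S_\ell$ lies strictly on one side of $m$, so $\vf(S_\ell, m) = 0$, while if $m = \ell$ then only a type-(2) crossing segment can occur, so $\vf(S_\ell, m) = 1$; in both subcases $\vf(S_\ell, m) \le 1 \le \vf(S)$. (For a singleton list, $n = 0$, the statement is trivial as both sides equal $1$.)

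For the remaining case, suppose $m$ meets $\ell$ in the single point $\vecx$. Write $\pi$ for the orthogonal projection onto $\ell$, set $P_j = \pi(\vecx_j)$ so that $S_\ell = \plist{P_0,\dots,P_n}$, and let $m'$ be the line through $\vecx$ perpendicular to $\ell$ (that is, the fibre $\pi^{-1}(\vecx)$). The crucial observation I would establish is that $\vecx_j$ and $P_j$ sit in the same position relative to $m'$: coordinatising $\ell$ by signed distance $s$ from $\vecx$, the side of $m'$ containing $\vecx_j$ is determined by the sign of the $\ell$-coordinate of $P_j$ (because $m' \perp \ell$), and this same sign determines the side of $m$ on which $P_j$ lies (because $m$, like $m'$, passes through $\vecx$ and hence separates the two rays of $\ell \setminus \{\vecx\}$); moreover $\vecx_j \in m' \iff P_j = \vecx \iff P_j \in m$. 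Consequently, for each index $j$, the configuration of the pair $(\vecx_j, \vecx_{j+1})$ relative to $m'$ (strictly opposite sides, or on/off the line) matches that of $(P_j, P_{j+1})$ relative to $m$.

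Checking Definition~\ref{defn:x-seg} type by type, this yields: $\ls[P_j,P_{j+1}]$ is a crossing segment of $S_\ell$ on $m$ precisely when $\ls[\vecx_j,\vecx_{j+1}]$ is a crossing segment of $S$ on $m'$, and summing over $j$ gives $\vf(S_\ell, m) = \vf(S, m') \le \vf(S)$, which together with the degenerate cases finishes the proof. I do not expect a real obstacle; the care needed is in the bookkeeping of the three crossing-segment types at the point $\vecx$ (types (2) and (3)) and in not overlooking the degenerate lines $m \parallel \ell$ and $m = \ell$. If one prefers to avoid the type-by-type argument, an alternative is to invoke Proposition~\ref{vf-component}: since $\pi$ is affine one may parametrise $\gamma_{S_\ell} = \pi \circ \gamma_S$ using the same nodes $t_j$, and $\pi(\gamma_S(t)) \in m \iff \gamma_S(t) \in m'$, so $L(S_\ell, m) = L(S, m')$ as subsets of $[0,1]$ and in particular have the same number of connected components.
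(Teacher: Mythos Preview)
Your proof is correct and follows essentially the same geometric idea as the paper's: replace the testing line by one perpendicular to $\ell$ through the same intersection point, and observe that the crossing-segment pattern is preserved because membership and sidedness for points on $\ell$ are determined solely by position along $\ell$. The paper streamlines the argument by first using affine invariance to take $\ell$ to be the real axis and then invoking the earlier remarks about lists in $\mR$ to assume the optimal testing line $\ell_0$ is already perpendicular, so the \emph{same} line $\ell_0$ serves for both $S$ and $S_\ell$; you instead keep $\ell$ arbitrary, treat every transversal $m$ and build the perpendicular companion $m'$, and handle the parallel and coincident cases separately. Your version is a bit more self-contained (and your alternative via Proposition~\ref{vf-component}, using $L(S_\ell,m)=L(S,m')$, is a clean way to avoid the type-by-type check), while the paper's is shorter by leaning on the preceding discussion.
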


\begin{proof}
As the variation factor exhibits affine invariance, we may assume that $\ell$ is the real axis. Denote $S_\ell = \plist{\Re(\vecx_0),\dots,\Re(\vecx_n)} = \plist{\vecy_0,\dots,\vecy_n}$, and let $\ell_0$ be a line such that $\vf(S_\ell) =\vf(S_\ell,\ell_0)$. By the earlier remarks, we may assume that $\ell_0$ is orthogonal to the real axis. Suppose that $\ls[\vecy_j,\vecy_{j+1}]$ is a crossing segment of $S_\ell$ on $\ell_0$. Observing that $\vecx_j \in \ell_0$ if and only if $\vecy_j \in \ell_0$, it follows that if $\ls[\vecy_j,\vecy_{j+1}]$ is a crossing segment, then so is $\ls[\vecx_j,\vecx_{j+1}]$. Thus $\vf(S) \ge \vf(S,\ell_0) = \vf(S_\ell,\ell_0) = \vf(S_\ell)$.
\end{proof}

It is worth recording a few simple properties of the variation factor which will be needed later. The first of which shows that if we reverse the order of the points in $S$ to form the list $S'$, the variation factor is unchanged. This follows immediately from Proposition~\ref{vf-component}.

\begin{lem}\label{vf-order}
Let $S=[\vecx_0,\vecx_1,\dots,\vecx_n]$ be a list of points in $\mathbb{C}$ and $S'$ the list obtained by reversing the order of points in $S$ \textup{(\textit{that is $S'=[\vecx_n,\vecx_{n-1},\dots,\vecx_0]$})}. Then for any line $\ell$ in the plane, \textup{vf}$(S,\ell)=\textup{vf}(S',\ell)$, and hence $\vf(S)=\vf(S')$.
\end{lem}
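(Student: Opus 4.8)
The plan is to deduce the lemma from Proposition~\ref{vf-component}, which identifies $\vf(S,\ell)$ with the number of connected components of the set $L(S,\ell) = \{t \in [0,1] \st \gamma_S(t) \in \ell\}$. The point is that the piecewise linear curve $\gamma_{S'}$ associated to the reversed list is simply $\gamma_S$ traversed backwards, so the two sets $L(S,\ell)$ and $L(S',\ell)$ will differ only by the reflection $t \mapsto 1-t$ of $[0,1]$, which preserves the number of connected components.

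In detail, I would first fix an arbitrary line $\ell$ and parametrize $\gamma_S$ on $[0,1]$ exactly as in the paragraph preceding Proposition~\ref{vf-component}, with nodes $0 = t_0 < t_1 < \dots < t_n = 1$ and $\gamma_S(t_j) = \vecx_j$. Next I would observe that $t \mapsto \gamma_S(1-t)$ is again an admissible parametrization of a piecewise linear curve joining points of $\sigma$ in order, namely $\gamma_{S'}$: it is continuous and affine on each subinterval, and at the nodes $t'_j := 1 - t_{n-j}$ (which satisfy $0 = t'_0 < \dots < t'_n = 1$) it takes the values $\gamma_S(t_{n-j}) = \vecx_{n-j}$, which are precisely the successive entries of $S' = [\vecx_n, \vecx_{n-1}, \dots, \vecx_0]$. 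Computing $L(S',\ell)$ with this parametrization then gives $L(S',\ell) = \{t \st \gamma_S(1-t) \in \ell\}$, which is the image of $L(S,\ell)$ under the homeomorphism $t \mapsto 1-t$ of $[0,1]$. Since a homeomorphism carries connected components to connected components, $L(S,\ell)$ and $L(S',\ell)$ have the same number of them, and Proposition~\ref{vf-component} yields $\vf(S,\ell) = \vf(S',\ell)$. Taking the maximum over all lines $\ell$ then gives $\vf(S) = \vf(S')$; the degenerate case $n=0$ is immediate since there $S' = S$.

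I do not anticipate a real obstacle. The only point that deserves a sentence of care is that Proposition~\ref{vf-component} is stated for one specific parametrization of $\gamma_S$, so one should note explicitly that the component count is unchanged under reparametrization by a homeomorphism of $[0,1]$ (equivalently, that the proof of Proposition~\ref{vf-component} applies verbatim to the reversed parametrization). A direct combinatorial argument, tracking how crossing segments of types (1)--(3) transform under reversal, is also possible but is fiddlier at the endpoints and along maximal runs of points lying on $\ell$ --- a type-(2) segment at the start trades places with a type-(3) segment at the end --- so the component-counting route is the clean one.
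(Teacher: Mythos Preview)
Your proposal is correct and is exactly the approach the paper takes: the paper's entire proof is the single sentence ``This follows immediately from Proposition~\ref{vf-component},'' and you have simply written out the details of that immediate deduction. Your care about the parametrization is appropriate but not a genuine obstacle, since any two piecewise-linear parametrizations of $\gamma_S$ with the prescribed nodes differ by a homeomorphism of $[0,1]$, so the component count in Proposition~\ref{vf-component} is well defined.
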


The next result states that if one starts with a list of points $S$ and then removes a point from $S$, one cannot increase the variation factor.

\begin{prop}\label{vf-lemma}
Let $\sigma \subseteq \mC$ be nonempty and compact, let $S$ be an ordered list of elements of $\sigma$, and let $S^{+}$ be a list formed by adding an additional
element into the list at some point. Then for any line in the plane $\vf(S,\ell) \le \vf(S^{+},\ell)$ and hence $\vf(S) \le \vf(S^{+})$. 
\end{prop}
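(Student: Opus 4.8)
The plan is to work directly with the crossing-segment description of $\vf(\cdot,\ell)$ from Definition~\ref{defn:x-seg}, comparing the crossing segments of $S$ with those of $S^+$ for a fixed line $\ell$ and then taking a maximum over $\ell$. I would first dispose of two easy cases. If $S$ has only one element the assertion is immediate (as in the footnote above). If $\vecy$ is \emph{appended} at the end of $S$, then every crossing segment of $S$ is still a crossing segment of $S^+$: for a segment $\ls[\vecx_i,\vecx_{i+1}]$, whether any of the three conditions of Definition~\ref{defn:x-seg} holds depends only on $\vecx_i$, $\vecx_{i+1}$, $\ell$ and (for type~(2)) on whether the segment is the first in the list --- none of which is affected by appending; hence $\vf(S,\ell)\le\vf(S^+,\ell)$ and so $\vf(S)\le\vf(S^+)$. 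Reversing both lists and applying Lemma~\ref{vf-order} then reduces the case where $\vecy$ is \emph{prepended} to the case where it is appended. What remains is the case where $\vecy$ is inserted strictly between two consecutive entries, say $S^+=[\vecx_0,\dots,\vecx_m,\vecy,\vecx_{m+1},\dots,\vecx_n]$ with $0\le m\le n-1$.

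In that case every segment $\ls[\vecx_i,\vecx_{i+1}]$ of $S$ with $i\ne m$ occurs again in $S^+$ with the same endpoints, and it is the first segment of $S^+$ precisely when it is the first segment of $S$ (here it is used that we have already reduced to the non-prepended case). Since whether a segment is a crossing segment on $\ell$ depends only on its endpoints, on $\ell$, and on whether it is the first segment of its list, such a segment is a crossing segment of $S$ exactly when it is one of $S^+$. The only segment of $S$ not covered this way is $\ls[\vecx_m,\vecx_{m+1}]$, which in $S^+$ is replaced by the two segments $\ls[\vecx_m,\vecy]$ and $\ls[\vecy,\vecx_{m+1}]$. Hence it suffices to show: if $\ls[\vecx_m,\vecx_{m+1}]$ is a crossing segment of $S$ on $\ell$, then at least one of $\ls[\vecx_m,\vecy]$ and $\ls[\vecy,\vecx_{m+1}]$ is a crossing segment of $S^+$ on $\ell$. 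Given this, comparing the numbers of crossing segments yields $\vf(S,\ell)\le\vf(S^+,\ell)$, and taking the maximum over all lines $\ell$ gives $\vf(S)\le\vf(S^+)$.

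The remaining step is a brief case analysis on the type of $\ls[\vecx_m,\vecx_{m+1}]$. If it is of type~(1), so $\vecx_m$ and $\vecx_{m+1}$ lie strictly on opposite sides of $\ell$, then $\vecy$ lies strictly on one of those two sides or on $\ell$; in the first two cases one of $\ls[\vecx_m,\vecy]$, $\ls[\vecy,\vecx_{m+1}]$ is of type~(1), and when $\vecy\in\ell$ the segment $\ls[\vecx_m,\vecy]$ is of type~(3). If it is of type~(2), then $m=0$ and $\vecx_0\in\ell$, and then $\ls[\vecx_0,\vecy]$ is the first segment of $S^+$ and has $\vecx_0\in\ell$, hence is of type~(2). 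If it is of type~(3), so $\vecx_m\notin\ell$ and $\vecx_{m+1}\in\ell$, then $\ls[\vecx_m,\vecy]$ is of type~(3) when $\vecy\in\ell$, and $\ls[\vecy,\vecx_{m+1}]$ is of type~(3) when $\vecy\notin\ell$. I do not anticipate any real obstacle; the only delicate point is keeping the bookkeeping of the three types straight, in particular tracking which segment is the first one in a list after an insertion, and that is precisely what the preliminary reduction via Lemma~\ref{vf-order} is meant to streamline. An alternative route would be to use Proposition~\ref{vf-component}, comparing the connected components of $L(S,\ell)$ and of $L(S^+,\ell)$ --- which differ only over the parameter subinterval corresponding to the replaced segment --- but the crossing-segment count seems the more transparent.
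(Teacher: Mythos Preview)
Your proof is correct and follows essentially the same route as the paper's: both argue directly with the crossing-segment count of Definition~\ref{defn:x-seg}, dispose of the endpoint-insertion cases using Lemma~\ref{vf-order}, and then do a finite case analysis for an interior insertion. The only differences are organisational: you handle appending first (where all existing segments are trivially preserved) and reduce prepending to it, whereas the paper does prepending first; and for the interior case you split according to the crossing \emph{type} of the removed segment $\ls[\vecx_m,\vecx_{m+1}]$, while the paper splits according to whether $\vecx_m$ and $\vecx_{m+1}$ lie on~$\ell$. Your organisation is arguably a touch cleaner, since by focussing only on the case where $\ls[\vecx_m,\vecx_{m+1}]$ \emph{is} a crossing segment you avoid having to remark separately that nothing is lost when it is not.
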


\begin{proof}
Let $S = \plist{\vecx_0,\dots,\vecx_n}$ and let $\ell$ be any line in the plane. Suppose that $\vecw\in\sigma$ is added to $S$ to form $S^+$.
Suppose first that $S^{+} = \plist{\vecw,\vecx_0,\dots,\vecx_n}$. If $\vecx_0 \not\in \ell$, or if $\vecx_0 \in \ell$ and $\vecw \not\in \ell$, then each of the original line segments in $S$ retains its original status as either a crossing or non-crossing segment of $S^{+}$ on $\ell$. If $\vecx_0,\vecw \in \ell$, then $\ls[\vecw, \vecx_0]$  replaces $\ls[\vecx_0,\vecx_1]$ as a crossing segment. In either case, the number of crossing segments is not decreased.

The case that $S^{+} = \plist{\vecx_0,\dots,\vecx_n,\vecw}$ follows from Lemma~\ref{vf-order} combined with the previous case. The remaining case, where the additional point $\vecw$ is added between the $j$ and $(j+1)$th elements of $S$, involves checking a few additional cases.

We consider the four possibilities determined by combinations of $\vecx_j$ and $\vecx_{j+1}$ lying on $\ell$ or not. Suppose that $\vecx_j,\vecx_{j+1}\notin\ell$. Then if $\ls[\vecx_j,\vecx_{j+1}]$ is a crossing segment (necessarily of type (1)), then either $\ls[\vecx_j,\vecw]$ or $\ls[\vecw,\vecx_{j+1}]$ will be a replacement crossing segment. If $\ls[\vecx_j,\vecx_{j+1}]$ is not a crossing segment, then the addition of $\vecw$ will maintain or increase the variation factor.

If $\vecx_j,\vecx_{j+1}\in\ell$, then adding $\vecw$ will maintain or increase the variation factor when $\vecw\in\ell$ or $\vecw\notin\ell$ respectively.

If $\vecx_j\notin\ell$ and $\vecx_{j+1}\in\ell$, then $\ls[\vecx_j,\vecx_{j+1}]$ is a crossing segment of type (3). If $\vecw$ is on the opposite side of $\ell$ to $\vecx_{j}$, then an additional crossing segment is introduced. Otherwise, the variation factor is unchanged. In either case, $\vf(S,\ell)\leq\vf(S^+,\ell)$.

For the final case of $\vecx_j\in\ell$ and $\vecx_{j+1}\notin\ell$, one can reverse the order of $S$. By Lemma~\ref{vf-order}, the variation factor is unchanged, and the problem reduces to the previous case. Thus in all cases, $\vf(S,\ell)\leq\vf(S^+,\ell)$, and hence $\vf(S)\leq\vf(S^+)$.
\end{proof}

Inductively, one deduces that adding any finite number of points to $S$ will either maintain or increase the variation factor.

A final point is that the definition allows that a list $S$ may contain a point $\vecx$ more than once. In the case that $\vecx$ appears twice in succession, the corresponding line segment $\ls[\vecx_j,\vecx_{j+1}]$ is degenerate, and clearly cannot be a crossing segment of type (1) or type (3) on any line. Indeed, it is not hard to check that if $S$ is any list of points, and $S'$ is the sublist formed by removing any point which is equal to its predecessor in $S$, then $\vf(S') = \vf(S)$.

\subsection{Variation in the plane}

Suppose that $\sigma$ is a nonempty compact subset of the plane, that $f: \sigma \to \mC$, and that $S = \plist{\vecx_0,\dots,\vecx_n}$ is a list of elements of $\sigma$.
We define the \textbf{curve variation of $f$ on the list $S$} to be
\begin{equation*} 
    \cvar(f, S) =  \begin{cases}
    \sum_{i=1}^{n} \abs{f(\vecx_{i}) - f(\vecx_{i-1})}
                       & \text{if $n \ge 1$,} \\
    0                 & \text{if $n = 0$.}
       \end{cases}
\end{equation*}
The \textbf{two-dimensional variation} of a function $f : \sigma
\rightarrow \mC$ is defined to be
\begin{equation*}
    \myvar(f, \sigma) = \sup_{S}
        \frac{ \cvar(f, S)}{\vf(S)},
\end{equation*}
where the supremum is taken over all finite ordered lists of elements of $\sigma$.
Letting $\|f\|_\infty = \sup\{|f(z)|\,:\, z\in\sigma\}$, the \textbf{variation norm} is
  \[ \normbv{f} = \norm{f}_\infty + \myvar(f,\sigma) \]
and this is used to define the set of functions of bounded variation on $\sigma$,
  \[ \BV(\sigma) = \{ f: \sigma \to \mC \st \normbv{f} < \infty\}. \]

Our first task is to show that $\normbv{\cdot}$ is indeed a norm, and that $\BV(\sigma)$ is always a Banach algebra. Perhaps, just as importantly, we shall shortly show that $\BV(\sigma)$ always contains all sufficiently nice functions, and in the case that $\sigma = [a,b] \subseteq \mR$, the above definition just gives the usual space met in real analysis.

We begin with a simple lemma which just requires straightforward uses of the triangle inequality.

\begin{lem} \label{lem:bvc:5540}
Let $\sigma \subseteq \mC$ be a nonempty compact set and suppose that  $f, g : \sigma \rightarrow \mC$. Then
\begin{enumerate}
    \item $\myvar(f + g, \sigma) \leq \myvar(f, \sigma) + \myvar(g, \sigma)$,
    \item $\myvar(f g, \sigma) \leq \norminf{f} \myvar(g, \sigma) + \norminf{g} \myvar(f, \sigma)$,
    \item $\myvar(\alpha f, \sigma) = \abs{\alpha} \myvar(f, \sigma)$,
    \item $|\myvar(f,\sigma) - \myvar(g,\sigma)| \le \myvar(f-g,\sigma)$,
    \item $\myvar(|f|,\sigma) \leq \myvar(f,\sigma)$.
\end{enumerate}
\end{lem}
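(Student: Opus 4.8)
The plan is to verify each of the five inequalities directly from the definition $\myvar(f,\sigma) = \sup_S \cvar(f,S)/\vf(S)$, by showing the desired estimate holds for $\cvar(\cdot,S)$ with a \emph{fixed} list $S$ (so the common denominator $\vf(S)$ divides through) and then taking the supremum. For any fixed list $S = [\vecx_0,\dots,\vecx_n]$ with $n\geq 1$, each of $\cvar(f+g,S)\le\cvar(f,S)+\cvar(g,S)$, $\cvar(\alpha f,S)=|\alpha|\,\cvar(f,S)$, and $\cvar(|f|,S)\le\cvar(f,S)$ is an immediate consequence of the triangle inequality $|a+b|\le|a|+|b|$, homogeneity, and the reverse triangle inequality $\bigl||a|-|b|\bigr|\le|a-b|$ applied termwise to the sum $\sum_{i=1}^n|\cdot(\vecx_i)-\cdot(\vecx_{i-1})|$; the $n=0$ case is trivial. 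Dividing by $\vf(S)\ge 1$ and taking $\sup_S$ gives (1), (3), and (5). For (2), the key pointwise identity is
\[
  f(\vecx_i)g(\vecx_i)-f(\vecx_{i-1})g(\vecx_{i-1})
  = f(\vecx_i)\bigl(g(\vecx_i)-g(\vecx_{i-1})\bigr)
  + \bigl(f(\vecx_i)-f(\vecx_{i-1})\bigr)g(\vecx_{i-1}),
\]
so that $\cvar(fg,S)\le\norminf{f}\,\cvar(g,S)+\norminf{g}\,\cvar(f,S)$; again divide by $\vf(S)$ and take the supremum.

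For (4), I would derive it from (1) by the standard reverse-triangle argument: applying (1) to $f = (f-g) + g$ gives $\myvar(f,\sigma)\le\myvar(f-g,\sigma)+\myvar(g,\sigma)$, and symmetrically $\myvar(g,\sigma)\le\myvar(g-f,\sigma)+\myvar(f,\sigma)$; since $\myvar(f-g,\sigma)=\myvar(g-f,\sigma)$ by (3) with $\alpha=-1$, combining the two inequalities yields $|\myvar(f,\sigma)-\myvar(g,\sigma)|\le\myvar(f-g,\sigma)$. One should note that this manipulation is only valid when the quantities involved are finite; if $\myvar(f-g,\sigma)=\infty$ there is nothing to prove, and otherwise finiteness of one of $\myvar(f,\sigma)$, $\myvar(g,\sigma)$ forces finiteness of the other, so the subtraction makes sense.

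There is no real obstacle here — every step is an application of the triangle inequality or its consequences, exactly as the excerpt anticipates. The only mild subtlety worth a sentence is the bookkeeping around possibly infinite values (ensuring (4) is stated and proved so as to hold vacuously when the right-hand side is infinite), and the observation that because the same list $S$ appears on both sides of each pointwise estimate, the denominators $\vf(S)$ cancel cleanly and one never has to compare variation factors of different lists.
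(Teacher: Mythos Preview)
Your proposal is correct and follows essentially the same approach as the paper: the paper declares (1), (3), (5) ``straightforward'' uses of the triangle inequality, gives exactly your termwise decomposition for (2), and derives (4) by writing $f=(f-g)+g$ and invoking (1) and (3). Your added remark about handling possibly infinite values in (4) is a nice bit of care that the paper omits.
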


\begin{proof} The first, third and fifth statements are straightforward.
For the second statement, we have
\begin{align*}
\myvar(fg,\sigma)	&	=\sup_{S} \frac{\cvar(fg,S)}{\vf(S)} \\
&	=\sup_S \frac{1}{\vf(S)} \sum_{i=1}^{n}|f(\vecx_i)g(\vecx_i)-f(\vecx_{i-1})g(\vecx_{i-1})| \\
&	\leq \sup_S\frac{1}{\vf(S)}\sum_{i=1}^{n}\big(|f(\vecx_i)||g(\vecx_i)-g(\vecx_{i-1})|+|f(\vecx_i)-f(\vecx_{i-1})||g(\vecx_{i-1})| \big) \\
&	\leq \sup_S \frac{\|f\|_{\infty} \cvar(g,S)}{\vf(S)}+\sup_S\frac{\|g\|_{\infty} \cvar(f,S)}{\vf(S)} \\
&	=\|f\|_\infty \myvar(g,\sigma)+\|g\|_\infty\myvar(f,\sigma).
\end{align*}
The fourth statement follows by writing $f = (f-g)+g$ and using (1) and (3).
\end{proof}

A simple consequence of the above lemma is that
 $BV(\sigma)$ is indeed a normed algebra.

 \begin{lem} \label{lem:bvc2}
Let $\sigma \subseteq \mC$ be nonempty compact. Let $f, g : \sigma
\rightarrow \mC$. Then
\begin{enumerate}
    \item $\normbv{f+g} \leq \normbv{f} + \normbv{g}$,
    \item $\normbv{f g} \leq \normbv{f} \normbv{g}$,
    \item $\normbv{\alpha f} = \abs{\alpha} \normbv{f}$.
\end{enumerate}
\end{lem}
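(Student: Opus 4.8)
The plan is to derive all three statements directly from Lemma~\ref{lem:bvc:5540} together with the elementary properties of the supremum norm; no further machinery is needed.

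First I would dispose of statements (1) and (3). For (1), the triangle inequality for $\norminf{\cdot}$ gives $\norminf{f+g} \le \norminf{f} + \norminf{g}$, while Lemma~\ref{lem:bvc:5540}(1) gives $\myvar(f+g,\sigma) \le \myvar(f,\sigma) + \myvar(g,\sigma)$; adding these two inequalities yields $\normbv{f+g} \le \normbv{f} + \normbv{g}$. Statement (3) is even quicker: $\norminf{\alpha f} = \abs{\alpha}\norminf{f}$ and, by Lemma~\ref{lem:bvc:5540}(3), $\myvar(\alpha f,\sigma) = \abs{\alpha}\myvar(f,\sigma)$, so $\normbv{\alpha f} = \abs{\alpha}\normbv{f}$.

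The one statement with any content is (2). Here I would start from $\norminf{fg} \le \norminf{f}\,\norminf{g}$ (valid for the genuine supremum) together with Lemma~\ref{lem:bvc:5540}(2), which bounds $\myvar(fg,\sigma)$ by $\norminf{f}\myvar(g,\sigma) + \norminf{g}\myvar(f,\sigma)$. Adding these gives
\[
  \normbv{fg} \le \norminf{f}\norminf{g} + \norminf{f}\myvar(g,\sigma) + \norminf{g}\myvar(f,\sigma).
\]
On the other hand, expanding the product of norms gives
\[
  \normbv{f}\,\normbv{g} = \norminf{f}\norminf{g} + \norminf{f}\myvar(g,\sigma) + \norminf{g}\myvar(f,\sigma) + \myvar(f,\sigma)\myvar(g,\sigma),
\]
which exceeds the previous bound by the nonnegative quantity $\myvar(f,\sigma)\myvar(g,\sigma)$. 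Hence $\normbv{fg} \le \normbv{f}\normbv{g}$.

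Finally I would note that these three inequalities also show $BV(\sigma)$ to be closed under addition, scalar multiplication and multiplication, so that it is genuinely an algebra, while positivity and definiteness of $\normbv{\cdot}$ are immediate from the corresponding properties of $\norminf{\cdot}$ since $\myvar(f,\sigma) \ge 0$. There is no real obstacle here; the only point needing a moment's care is that every quantity appearing above is a priori finite, which is exactly what the displayed inequalities guarantee once $f, g \in BV(\sigma)$.
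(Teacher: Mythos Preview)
Your proof is correct and is exactly the approach the paper has in mind: the paper does not spell out a proof at all, merely remarking that the lemma is ``a simple consequence of the above lemma'' (Lemma~\ref{lem:bvc:5540}), and your argument is precisely the routine verification that makes this explicit.
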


%
It is easy to check that $f \in BV(\sigma)$ if and only if both $\Re f$ and $\Im f$ lie in $BV(\sigma)$. We shall denote the real normed algebra of real-valued functions of bounded variation by $\bvreal$.

Before continuing it is worth confirming that unless $\sigma$ is a singleton set, $BV(\sigma)$ always contains more than just the constant functions. Given $U \subseteq \sigma$, let $\chi_U$ denote the characteristic function of $U$ (as a function from $\sigma$ to $\mC$).

\begin{lem}\label{char-fns}
Suppose that $\sigma \subseteq \mC$ is a nonempty compact set. Then $\chi_{\{\vecz\}} \in BV(\sigma)$ for all $\vecz \in \sigma$.
\end{lem}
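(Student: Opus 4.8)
The plan is to show directly that $\myvar(\chi_{\{\vecz\}},\sigma) \le 2$; since $\norminf{\chi_{\{\vecz\}}} = 1$, this gives $\normbv{\chi_{\{\vecz\}}} \le 3 < \infty$ and hence $\chi_{\{\vecz\}} \in \BV(\sigma)$. Write $f = \chi_{\{\vecz\}}$ and fix an arbitrary finite list $S = \plist{\vecx_0,\dots,\vecx_n}$ of elements of $\sigma$; the task is to bound $\cvar(f,S)/\vf(S)$ by $2$.

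I would first get an upper bound on $\cvar(f,S)$. Group the list into maximal runs of consecutive entries equal to $\vecz$, and let $k$ be the number of such runs. If $k = 0$ then $\vecz$ does not occur in $S$, so $\cvar(f,S) = 0$ and there is nothing to prove; assume therefore that $k \ge 1$. A summand $\abs{f(\vecx_i) - f(\vecx_{i-1})}$ is $1$ precisely when exactly one of $\vecx_{i-1},\vecx_i$ equals $\vecz$, that is, precisely when $i \ge 1$ is the first index of a run, or $i \le n$ is one more than the last index of a run. As each run supplies at most two such indices, $\cvar(f,S) \le 2k$.

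The substantive step, and the one I expect to need the most care, is the matching lower bound $\vf(S) \ge k$. The idea is that the finitely many points $\vecx_i$ that differ from $\vecz$ determine only finitely many lines through $\vecz$, so one may fix a line $\ell$ through $\vecz$ passing through none of them. Parametrise $\gamma_S$ over $[0,1]$ with $\gamma_S(t_i) = \vecx_i$, as in the discussion preceding Proposition~\ref{vf-component}. On the parameter subinterval corresponding to a run of $\vecz$'s, $\gamma_S$ is constantly equal to $\vecz \in \ell$, so each of the $k$ runs contributes a subinterval of $L(S,\ell)$; and between two consecutive runs there is an index $i$ with $\vecx_i \ne \vecz$ (the entry immediately after a maximal run differs from $\vecz$), so that $\gamma_S(t_i) = \vecx_i \notin \ell$ and $t_i$ lies strictly between the corresponding two subintervals and outside $L(S,\ell)$. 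Hence $L(S,\ell)$ has at least $k$ connected components, and Proposition~\ref{vf-component} gives $\vf(S) \ge \vf(S,\ell) \ge k$.

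Combining the two estimates yields $\cvar(f,S)/\vf(S) \le 2k/k = 2$ for every finite list $S$, so $\myvar(f,\sigma) \le 2$, completing the proof. When writing this out carefully, the point that must not be glossed over is that $\ell$ really does push $\gamma_S$ off $\ell$ \emph{strictly between} the run-subintervals: this is exactly why $\ell$ is chosen to avoid every point of $S$ other than $\vecz$, and why one uses that consecutive maximal runs are separated by an entry different from $\vecz$. The degenerate cases — for instance $S$ having a single entry, or consisting only of copies of $\vecz$, in which case $k = 1$ and $\cvar(f,S) = 0$ — are trivial and can be disposed of in a sentence.
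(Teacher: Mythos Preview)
Your argument is correct and follows essentially the same idea as the paper's proof: choose a line $\ell$ through $\vecz$ that misses every other point of $S$, and use this to bound $\vf(S,\ell)$ from below against the curve variation. The only organizational difference is that the paper first invokes Proposition~\ref{vf-lemma} to reduce to lists where consecutive entries alternate between $\vecz$ and non-$\vecz$ (so $\cvar(f,S)=n$ and a direct crossing-segment count gives $\vf(S,\ell)\ge n/2$), whereas you avoid that reduction by grouping into $k$ maximal $\vecz$-runs and appealing to Proposition~\ref{vf-component} to get $\vf(S,\ell)\ge k$ directly; both routes yield the same bound $\myvar(f,\sigma)\le 2$.
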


\begin{proof} The result is clear if $\sigma$ is a singleton set, so assume that $\sigma$ has at least two elements.
Fix $\vecz \in \sigma$ and let $f = \chi_{\{\vecz\}}$. Let $S = [\vecx_0,\dots,\vecx_n]$ be a list of points in $\sigma$ for which $\cvar(f,S) \ne 0$. Since, by Proposition~\ref{vf-lemma}, omitting points in $\sigma$ cannot increase the variation factor, we may assume that $|f(\vecx_j)-f(\vecx_{j+1})| = 1$ for all $j$, and so $\cvar(f,S) = n$. Now choose a line $\ell$ which passes through $\vecz$ but none of the other elements of $S$. Note that if a segment $\ls[\vecx_{j-1},\vecx_j]$ is not a crossing segment of $S$ on $\ell$, then $\ls[\vecx_j,\vecx_{j+1}]$ must be and hence $\vf(S) \ge \vf(S,\ell) \ge n/2$. It follows that $\cvar(f,S)/\vf(S) \le 2$ and hence $\myvar(f,\sigma) \le 2$.
\end{proof}

Easy consequences of this lemma are recorded in the following proposition.

\begin{prop}\label{bv-notc}
Suppose that $\sigma$ is an infinite compact subset of $\mC$. Then
\begin{enumerate}
\item $BV(\sigma)$ is infinite dimensional,
\item there exists a discontinuous function in $BV(\sigma)$.
\end{enumerate}
\end{prop}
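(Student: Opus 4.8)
The plan is to derive both statements as immediate corollaries of Lemma~\ref{char-fns}. Since $\sigma$ is infinite, we may pick a countably infinite sequence of distinct points $\vecz_1, \vecz_2, \dots \in \sigma$. By Lemma~\ref{char-fns}, each $\chi_{\{\vecz_k\}}$ lies in $BV(\sigma)$, and these functions are manifestly linearly independent (their supports are disjoint nonempty sets), so $BV(\sigma)$ is infinite dimensional. This handles part (1).

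For part (2), I would exhibit a single discontinuous element of $BV(\sigma)$. Since $\sigma$ is infinite and compact, it has an accumulation point; pick a convergent sequence of distinct points $\vecz_k \to \vecz_\infty$ in $\sigma$. The natural candidate is $g = \sum_{k=1}^\infty 2^{-k} \chi_{\{\vecz_k\}}$. Each summand has $\normbv{\cdot} \le 3$ by (the proof of) Lemma~\ref{char-fns}, so the series converges absolutely in $BV(\sigma)$ once we know $BV(\sigma)$ is a Banach space — and indeed $\|\cdot\|_{BV(\sigma)}$-completeness is something we would want to have established before this point (it follows from Lemma~\ref{lem:bvc2} together with a standard Cauchy-sequence argument, and the excerpt flags that ``$\BV(\sigma)$ is always a Banach algebra'' is about to be proved). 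Thus $g \in BV(\sigma)$. The function $g$ is discontinuous at $\vecz_\infty$: we have $g(\vecz_k) \ge 2^{-k} > 0$ for each $k$ while $g(\vecz_\infty) = 0$ (assuming, as we may by passing to a subsequence, that $\vecz_\infty$ is not among the $\vecz_k$), so $g(\vecz_k) \not\to g(\vecz_\infty)$ even though $\vecz_k \to \vecz_\infty$.

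Alternatively, and perhaps more cleanly if one wants to avoid invoking completeness, one can take a simpler finite-type construction: let $\vecz_k \to \vecz_\infty$ be distinct points of $\sigma$ as above and set $g = \chi_{U}$ where $U = \{\vecz_1, \vecz_2, \dots\}$. One then checks directly, using the variation-factor machinery, that $\myvar(\chi_U,\sigma) < \infty$ — but this is essentially a repetition of the argument in Lemma~\ref{char-fns} with a bit more bookkeeping (choosing a line through the relevant points), so the series approach via completeness is the more economical route.

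The only real obstacle is a logical/ordering one rather than a mathematical one: the clean proof of (2) uses that $BV(\sigma)$ is complete, which is asserted but not yet proved at this point in the exposition. Since the statement explicitly calls these ``easy consequences'' of Lemma~\ref{char-fns}, I expect the intended argument is in fact the direct one — namely observe that a characteristic function of a single point is already discontinuous when that point is an accumulation point of $\sigma$ is \emph{not} quite right (it can be continuous only if the point is isolated, and $\chi_{\{\vecz_\infty\}}$ with $\vecz_\infty$ an accumulation point is indeed discontinuous at $\vecz_\infty$). This is the slick resolution: simply take $g = \chi_{\{\vecz_\infty\}}$ for any accumulation point $\vecz_\infty$ of $\sigma$; this is discontinuous at $\vecz_\infty$, and lies in $BV(\sigma)$ by Lemma~\ref{char-fns}, with no appeal to completeness needed. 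That is the proof I would write.
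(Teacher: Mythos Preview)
Your final argument is correct and is exactly what the paper intends: it records the proposition as an ``easy consequence'' of Lemma~\ref{char-fns} without further proof, and the intended reading is precisely that the $\chi_{\{\vecz_k\}}$ are linearly independent (giving (1)) and that $\chi_{\{\vecz_\infty\}}$ for an accumulation point $\vecz_\infty$ is a discontinuous element of $BV(\sigma)$ (giving (2)). The detours through the series $\sum 2^{-k}\chi_{\{\vecz_k\}}$ and through $\chi_U$ are unnecessary and, as you yourself note, the series argument would be logically premature here since completeness is only established in the section immediately following this proposition; just write the two-line direct argument and drop the exploratory discussion.
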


Of course, if $\sigma$ is a finite set, then every function $f: \sigma \to \mC$ is of bounded variation.

\subsection{$BV(\sigma)$ is a Banach algebra}

\begin{thm}
 Let $\sigma \subseteq \mC$ be nonempty and compact. Then $BV(\sigma)$ is a Banach algebra.
\end{thm}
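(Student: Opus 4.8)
The plan is to show that $BV(\sigma)$ is complete; together with Lemma~\ref{lem:bvc2}, which already establishes that it is a normed algebra, this gives the result. So let $(f_k)$ be a Cauchy sequence in $(BV(\sigma),\normbv{\cdot})$. Since $\norminf{f_k - f_m} \le \normbv{f_k - f_m}$, the sequence is uniformly Cauchy, and as $\sigma$ is compact, $f_k \to f$ uniformly for some bounded $f: \sigma \to \mC$; in particular $\norminf{f_k - f} \to 0$. It remains to check that $f \in BV(\sigma)$ and that $\myvar(f_k - f, \sigma) \to 0$.

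The key step is the following standard argument. Fix $\varepsilon > 0$ and choose $N$ so that $\myvar(f_k - f_m, \sigma) < \varepsilon$ for all $k, m \ge N$. Fix any finite list $S = [\vecx_0,\dots,\vecx_n]$ of elements of $\sigma$. Then for $k, m \ge N$,
\[
  \frac{\cvar(f_k - f_m, S)}{\vf(S)} \le \myvar(f_k - f_m, \sigma) < \varepsilon.
\]
Since $\cvar(f_k - f_m, S) = \sum_{i=1}^n |(f_k - f_m)(\vecx_i) - (f_k - f_m)(\vecx_{i-1})|$ is a finite sum and $f_m \to f$ pointwise, letting $m \to \infty$ gives $\cvar(f_k - f, S)/\vf(S) \le \varepsilon$ for every $k \ge N$. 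Taking the supremum over all lists $S$ yields $\myvar(f_k - f, \sigma) \le \varepsilon$ for all $k \ge N$. This simultaneously shows $f_k - f \in BV(\sigma)$ (so $f = f_N - (f_N - f) \in BV(\sigma)$, using Lemma~\ref{lem:bvc:5540}(1)) and that $\myvar(f_k - f, \sigma) \to 0$. Combining with the uniform convergence, $\normbv{f_k - f} \to 0$, so $BV(\sigma)$ is complete.

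I expect the proof to be essentially routine; there is no serious obstacle, since all the algebra-norm inequalities are already in hand from Lemma~\ref{lem:bvc2} and the only real content is the interchange of a pointwise limit with a finite sum inside $\cvar$, followed by a supremum over lists. The one point requiring a little care is the order of the limiting operations: one must fix the list $S$ \emph{first}, pass to the limit in $m$ while $S$ is held fixed (so that only finitely many point evaluations are involved), and only then take the supremum over $S$; attempting to pass $\myvar(f_k - f_m,\sigma) = \sup_S \cvar(f_k-f_m,S)/\vf(S)$ to the limit before fixing $S$ would beg the question. One should also note at the outset that $\vf(S) \ge 1$ for every list, so the quotient is always well-defined.
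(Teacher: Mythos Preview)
Your proof is correct and follows essentially the same approach as the paper's: both reduce completeness to the observation that for a fixed finite list $S$ the quantity $\cvar(\cdot,S)$ is continuous under pointwise limits, so one may pass from the Cauchy estimate $\cvar(f_k-f_m,S)/\vf(S)<\varepsilon$ to $\cvar(f_k-f,S)/\vf(S)\le\varepsilon$ and then take the supremum over $S$. The paper carries this out by explicitly choosing, for each list $S$, an index $N_S\ge N$ with $f_{N_S}$ uniformly close to $f$ on the points of $S$, whereas you simply let $m\to\infty$ inside the finite sum; your version is slightly more streamlined but the underlying idea is identical.
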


\begin{proof}
Lemma 2.7 shows that $BV(\sigma)$ is a normed algebra, so it remains to show that $BV(\sigma)$ is complete.

Suppose then that $(f_k)_{k=1}^\infty$ is a Cauchy sequence in $BV(\sigma)$. By the definition of the variation norm, this sequence certainly converges uniformly to a function $f$.

Fix $\varepsilon > 0$. Choose $N$ such that for all $j,k \ge N$, $\normbv{f_j - f_k} < \frac{\varepsilon}{2}$. Suppose now that $S = [\vecx_0,\dots,\vecx_n]$ is a list of points in $\sigma$. As $f_k \to f$ uniformly we can choose $N_S \ge N$ such that $\max\limits_{0\leq i\leq n} |f(\vecx_i)-f_{N_S}(\vecx_i)| < \frac{\varepsilon}{4n}$. Then whenever $k\ge N$ 
  \begin{align*}
   \cvar(f - f_k  , S)  &= \cvar(f - f_{N_S}+f_{N_S}-f_k,S) \\
        & \le  \cvar(f - f_{N_S},S) + \cvar(f_{N_S}-f_k,S) \\
        &= \sum_{i=1}^n | (f(\vecx_i)-f_{N_S}(\vecx_i))  -  (f(\vecx_{i-1})-f_{N_S}(\vecx_{i-1}))| \\
        & \hskip 1cm  +   \cvar(f_{N_S}-f_k,S) \\
        &\le \sum_{i=1}^n | f(\vecx_i)-f_{N_S}(\vecx_i)| + \sum_{i=1}^n |f(\vecx_{i-1})-f_{N_S}(\vecx_{i-1})|\\
                &\hskip 1cm  +  \cvar(f_{N_S}-f_k,S) \\ 
         &<  \frac{\varepsilon}{2} + \cvar(f_{N_S} - f_k,S).
  \end{align*}
Recalling that $\vf(S) \ge 1$,
  \[ \frac{\cvar(f - f_k, S)}{\vf(S)} < \frac{\varepsilon}{2\vf(S)} + \frac{\cvar(f_{N_S} - f_k,S)}{\vf(S)}
   < \frac{\varepsilon}{2} + \norm{f_{N_S}-f_k}_{BV(\sigma)} < \varepsilon
  \]
and so  $\myvar(f-f_k,\sigma) \le \varepsilon$.
Hence $f\in BV(\sigma)$ and $f_k \to f$ in $BV(\sigma)$.
\end{proof}

One of the `design criteria' for the definition of variation in the plane is that it should be invariant under translations and rotations. Since all of the quantities involved in the definition of variation are invariant under affine transformations of $\sigma$ it is easy to see that this is the case.

\begin{thm}\label{aff-inv}
 Suppose that $\alpha,\beta \in \mC$, where $\alpha \ne 0$, and let $\phi(z) = \alpha z + \beta$.
 Suppose that $\sigma$ is a nonempty compact subset of $\mC$, and let ${\hat \sigma} = \phi(\sigma)$.
 For $f: \sigma \to \mC$ let ${\hat f}: {\hat \sigma} \to \mC$ be defined by ${\hat f} = f \circ \phi^{-1}$.
 Then the map $f\to {\hat f}$ is an isometric Banach algebra isomorphism from $BV(\sigma)$ to $BV({\hat \sigma})$.
\end{thm}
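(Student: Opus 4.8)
The plan is to verify directly that the map $f \mapsto \hat f$ is a well-defined linear bijection which preserves the norm; the Banach algebra structure then follows automatically. First I would observe that $\phi$ is a bijection from $\sigma$ onto $\hat\sigma$, so $f \mapsto f \circ \phi^{-1}$ is a linear bijection from the space of all functions $\sigma \to \mC$ onto the space of all functions $\hat\sigma \to \mC$, with inverse $g \mapsto g \circ \phi$. It clearly preserves multiplication, since $(fg) \circ \phi^{-1} = (f \circ \phi^{-1})(g \circ \phi^{-1})$, and it preserves constants. So the only real content is the claim that $\normbv{\hat f}_{BV(\hat\sigma)} = \normbv{f}$, which in turn (given the definition of the norm) reduces to showing $\norm{\hat f}_\infty = \norm{f}_\infty$ and $\myvar(\hat f,\hat\sigma) = \myvar(f,\sigma)$.

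The supremum-norm equality is immediate, since $\phi$ is a bijection between $\sigma$ and $\hat\sigma$ and $\hat f(\hat\sigma) = f(\sigma)$. For the variation, the key point is that $\phi$ induces a bijection between finite ordered lists in $\sigma$ and finite ordered lists in $\hat\sigma$: if $S = [\vecx_0,\dots,\vecx_n]$ is a list in $\sigma$, let $\phi(S) = [\phi(\vecx_0),\dots,\phi(\vecx_n)]$, a list in $\hat\sigma$, and this correspondence is a bijection. Then $\cvar(\hat f, \phi(S)) = \sum_{i=1}^n \abs{\hat f(\phi(\vecx_i)) - \hat f(\phi(\vecx_{i-1}))} = \sum_{i=1}^n \abs{f(\vecx_i) - f(\vecx_{i-1})} = \cvar(f, S)$. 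The remaining ingredient is $\vf(\phi(S)) = \vf(S)$: this was already noted in the excerpt, immediately after the definition of $\vf(S)$, where it is stated that the variation factor is invariant under any invertible affine transformation of the plane. (If one wanted to be fully explicit, one checks that $\phi$, viewed as a real-affine map of $\mR^2$, sends lines to lines and preserves the three types of crossing segment, so $\vf(\phi(S),\phi(\ell)) = \vf(S,\ell)$ for every line $\ell$, and then takes the maximum over $\ell$.) Combining these, $\cvar(\hat f,\phi(S))/\vf(\phi(S)) = \cvar(f,S)/\vf(S)$ for every list $S$, and taking suprema over all lists (using that $\phi(S)$ ranges over all lists in $\hat\sigma$ as $S$ ranges over all lists in $\sigma$) gives $\myvar(\hat f,\hat\sigma) = \myvar(f,\sigma)$.

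Putting it together: $\normbv{\hat f}_{BV(\hat\sigma)} = \normbv{f}$, so $f \mapsto \hat f$ is an isometry; in particular it maps $BV(\sigma)$ onto $BV(\hat\sigma)$ (a function has bounded variation iff its image under the map does), and being a norm-preserving linear bijection that is also multiplicative and unital, it is an isometric Banach algebra isomorphism. I do not expect any genuine obstacle here — the whole argument is bookkeeping built on the affine invariance of $\vf$, which the excerpt has already granted. The only point requiring a moment's care is making explicit that $S \mapsto \phi(S)$ is a bijection on lists so that the supremum defining $\myvar$ is taken over the same index set on both sides; once that is said, everything is a one-line substitution.
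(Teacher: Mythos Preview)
Your proposal is correct and matches the paper's approach exactly: the paper does not give a detailed proof but simply remarks that ``all of the quantities involved in the definition of variation are invariant under affine transformations of $\sigma$'' and that the result ``is easy to see,'' relying on the earlier observation that $\vf(S)$ is invariant under any invertible affine transformation of the plane. Your write-up just spells out the bookkeeping the paper leaves to the reader.
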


Affine transformations of $\mR^2$ are of course slightly more general, and one could just as easily rewrite the theorem in that setting.

Another more or less immediate consequence of the definition is that restrictions of functions of bounded variation are also of bounded variation.

\begin{thm}\label{bv-restrict}
Suppose that $\sigma_1,\sigma$ are nonempty compact subsets of the plane with $\sigma_1 \subseteq \sigma$. If $f \in BV(\sigma)$ then $f|\sigma_1 \in BV(\sigma_1)$, with $\norm{f|\sigma_1}_{BV(\sigma_1)} \le \norm{f}_{BV(\sigma)}$.
\end{thm}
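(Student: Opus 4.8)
The statement to prove is Theorem~\ref{bv-restrict}: restrictions of $BV$ functions are $BV$ with no increase in norm.

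\textbf{Approach.} The plan is to work directly from the definition of $\myvar(\cdot,\sigma)$ as a supremum of $\cvar(f,S)/\vf(S)$ over finite ordered lists $S$, exploiting the simple observation that a list of points drawn from $\sigma_1$ is \emph{a fortiori} a list of points drawn from $\sigma$. The key point is that both quantities appearing in the ratio, $\cvar(f,S)$ and $\vf(S)$, depend only on the list $S$ (and, for the numerator, on the values of $f$), not on which ambient compact set $\sigma \supseteq S$ we regard the points as living in. So the supremum defining $\myvar(f|\sigma_1,\sigma_1)$ is a supremum over a \emph{smaller} collection of lists than the supremum defining $\myvar(f,\sigma)$.

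\textbf{Key steps.} First I would let $S = [\vecx_0,\dots,\vecx_n]$ be an arbitrary finite ordered list of elements of $\sigma_1$. Since $\sigma_1 \subseteq \sigma$, this is also a finite ordered list of elements of $\sigma$. Second, I would note that $\cvar(f|\sigma_1, S) = \sum_{i=1}^n \abs{f(\vecx_i) - f(\vecx_{i-1})} = \cvar(f,S)$ since $f$ and $f|\sigma_1$ agree at every point of $S$, and that $\vf(S)$ is defined purely geometrically from the list $S$, hence is the same whether computed in the context of $\sigma_1$ or $\sigma$. Therefore $\cvar(f|\sigma_1,S)/\vf(S) = \cvar(f,S)/\vf(S) \le \myvar(f,\sigma)$. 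Taking the supremum over all finite ordered lists $S$ of elements of $\sigma_1$ gives $\myvar(f|\sigma_1,\sigma_1) \le \myvar(f,\sigma)$. Third, since $\sigma_1 \subseteq \sigma$ we trivially have $\norm{f|\sigma_1}_\infty = \sup\{|f(z)| : z \in \sigma_1\} \le \sup\{|f(z)| : z \in \sigma\} = \norm{f}_\infty$. Adding these two inequalities yields $\norm{f|\sigma_1}_{BV(\sigma_1)} \le \norm{f}_{BV(\sigma)}$; in particular the left-hand side is finite, so $f|\sigma_1 \in BV(\sigma_1)$.

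\textbf{Main obstacle.} Honestly, there is no substantial obstacle here — this is a monotonicity-of-supremum-over-a-subset argument, and the only thing to be careful about is the (trivial) verification that neither $\cvar$ nor $\vf$ secretly depends on the ambient set. One minor point worth a sentence is the edge case where $\sigma_1$ is a singleton or where lists have a single element, but by the footnote convention in the paper these cases are covered by the definition $\vf([\vecx_0]) = 1$, and the argument above goes through verbatim.
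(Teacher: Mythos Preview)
Your proposal is correct and follows essentially the same argument as the paper: the paper's proof simply observes that the set of finite lists of points in $\sigma_1$ is contained in the set of finite lists of points in $\sigma$, whence $\myvar(f|\sigma_1,\sigma_1) \le \myvar(f,\sigma)$, and then adds the trivial sup-norm inequality. Your write-up is a slightly more explicit version of the same idea, making clear that neither $\cvar$ nor $\vf$ depends on the ambient set.
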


\begin{proof}
Suppose that $\sigma,\sigma_1$ are nonempty compact subsets of $\mC$, $\sigma_1\subseteq\sigma$and that $f\in BV(\sigma)$. Since the set of finite lists of points in $\sigma_1$ is contained in the set of finite lists of points in $\sigma$, we obtain that $\myvar(f|\sigma_1,\sigma_1)\leq\myvar(f,\sigma)$, and the result follows since $\|f|\sigma_1\|_\infty\leq\|f\|_\infty$.
\end{proof}

\subsection{The case $\sigma \subseteq \mR$}

The space of functions of bounded variation on an interval $[a,b]$ is one of the classical spaces of real analysis. Indeed there is no great complication in considering more general subsets of the real line and this was already considered in Saks' classic 1930's text \cite{S}. In order to justify our use of the term `bounded variation' for functions defined on more general subsets of the plane, we should at this point check that in the case that $\sigma \subseteq \mR$, the definition above does just reduce to the classical one.

Suppose then that $\sigma$ is a nonempty compact subset of $\mR$ and that $f: \sigma \to \mC$. Suppose that $\mathcal{P} = \{\vecx_0 < \vecx_1 < \dots < \vecx_n\} \subseteq \sigma$ is a partition. Any such partition has a corresponding list $S = [\vecx_0,\vecx_1,\dots,\vecx_n]$ which has variation factor $1$, so it is immediate from the classical definition of variation that $\myvar(f) \le \myvar(f,\sigma)$.


\begin{prop}\label{varreal}
Suppose that $\sigma$ and $f$ are as above. Then $\myvar(f) = \myvar(f,\sigma)$.
\end{prop}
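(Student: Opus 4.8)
The plan is to establish the reverse inequality $\myvar(f,\sigma) \le \myvar(f)$, since the excerpt has already observed that $\myvar(f) \le \myvar(f,\sigma)$. Unwinding the definition of $\myvar(f,\sigma)$, it suffices to show that
\[
  \cvar(f,S) \le \vf(S)\,\myvar(f)
\]
for every finite list $S = [\vecx_0,\dots,\vecx_n]$ of points of $\sigma$ (the cases $n=0$, or where all the $\vecx_i$ coincide, being trivial since then $\cvar(f,S)=0$).

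First I would introduce the sorted list $y_0 < y_1 < \cdots < y_m$ of the distinct values occurring among $\vecx_0,\dots,\vecx_n$, and for each $l$ with $0\le l<m$ let $c_l$ be the number of indices $i\in\{1,\dots,n\}$ for which $y_l$ and $y_{l+1}$ both lie in $[\min(\vecx_{i-1},\vecx_i),\max(\vecx_{i-1},\vecx_i)]$; informally, $c_l$ counts the segments of $\gamma_S$ that run the full length of the edge $[y_l,y_{l+1}]$. The crucial point — which is the concrete content of the remark, made just before the statement, that $\vf(S)$ counts the largest number of times $\gamma_S$ visits a single point — is that $c_l \le \vf(S)$ for every $l$. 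To see this, fix any $x$ strictly between $y_l$ and $y_{l+1}$ and let $\ell$ be the vertical line through $x$. No $\vecx_i$ lies on $\ell$, so no crossing segment of $S$ on $\ell$ can be of type (2) or (3), and $\ls[\vecx_{i-1},\vecx_i]$ is a crossing segment of type (1) exactly when $\vecx_{i-1}$ and $\vecx_i$ lie on opposite sides of $x$ — which, since $y_l,y_{l+1}$ are the consecutive values flanking $x$, is precisely the condition defining $c_l$. Hence $\vf(S,\ell)=c_l$, so $c_l\le\vf(S)$.

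Next I would bound $\cvar(f,S)$ edge by edge. For fixed $i$, the edges $[y_l,y_{l+1}]$ contained in $[\vecx_{i-1},\vecx_i]$ form a consecutive chain joining $\vecx_{i-1}$ to $\vecx_i$, so the triangle inequality gives $\abs{f(\vecx_i)-f(\vecx_{i-1})} \le \sum \abs{f(y_{l+1})-f(y_l)}$, the sum being over exactly those $l$. Summing over $i$ and interchanging the order of summation yields
\[
  \cvar(f,S) \le \sum_{l=0}^{m-1} c_l\,\abs{f(y_{l+1})-f(y_l)}
              \le \vf(S)\sum_{l=0}^{m-1}\abs{f(y_{l+1})-f(y_l)}
              \le \vf(S)\,\myvar(f),
\]
the last step using that $\{y_0<\cdots<y_m\}$ is a partition of points of $\sigma$. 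Dividing by $\vf(S)\ge1$ and taking the supremum over $S$ gives $\myvar(f,\sigma)\le\myvar(f)$, and combined with the reverse inequality this proves the proposition.

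I expect the only step needing genuine care to be the identity $\vf(S,\ell)=c_l$: one must check that for a vertical line through an interior point of $[y_l,y_{l+1}]$ no crossing segment of type (2) or (3) can occur, and that the type-(1) crossing segments are exactly the segments of $\gamma_S$ spanning that edge. Everything else is the triangle inequality together with an interchange of summation. A slightly different but equivalent packaging would be to first refine $S$ by inserting all the $y_j$ into each segment in order: this changes neither the image of $\gamma_S$ nor, by Proposition~\ref{vf-component}, the variation factor, while it can only increase $\cvar(f,\cdot)$, so one is reduced to a ``walk'' on $y_0,\dots,y_m$ that traverses each edge at most $\vf(S)$ times, for which the bound is immediate.
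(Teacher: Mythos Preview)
Your proof is correct and follows essentially the same approach as the paper: sort the distinct values of $S$, count for each consecutive pair $(y_l,y_{l+1})$ how many segments of $\gamma_S$ span that edge, bound that count by $\vf(S)$ via a line through an interior point, and then use the triangle inequality to conclude. The paper packages this via the ``alternative'' route you mention at the end --- it explicitly forms the refined list $\hat S$ by inserting the sorted values into each segment and then reads off the counts $c_k$ --- but the substance is identical.
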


\begin{proof}
From the remark above, we only need to show that
$\myvar(f) \geq \myvar(f,\sigma)$.
Suppose that $S = \plist{\vecx_0,\dots,\vecx_n}$ is a list of points in $\sigma$. The list $S$ generates a partition $\mathcal{P} = \{t_0,\dots,t_m\}$ by ordering the distinct elements of $S$. Now form a  list ${\hat S}$ by starting with the list $S$ and, for each $j$, inserting between $\vecx_j$ and $\vecx_{j+1}$ all of the points of $\mathcal{P}$ that are passed on the line segment $s_j = \ls[\vecx_j,\vecx_{j+1}]$. (By way of an example, if $S = [2,4,1,1,3,2]$ then $\mathcal{P} = \{1,2,3,4\}$ and ${\hat S} = [2,3,4,3,2,1,1,2,3,2]$.)

By the triangle inequality,
  \[ \cvar(f,S) = \sum_{j=1}^n |f(\vecx_j) - f(\vecx_{j-1})|
        \le \cvar(f,{\hat S})
        = \sum_{k=1}^m c_k |f(t_k) - f(t_{k-1})|
  \]
where $c_k$ counts the number of times the term $|f(t_k) - f(t_{k-1})|$ appears in the sum for $\cvar(f,{\hat S})$. (In the example $(c_1,c_2,c_3) = (2,4,2)$.) Recall that $\gamma_S$ denotes the piecewise linear curve joining the points of $S$ in order. Note that the number of times that $\gamma_S$ crosses any point in the interior of the line segment $\ls[t_{k-1},t_k]$ is  $c_k$, and so $\vf(S) \ge \max\limits_kc_k$. It follows that
  \[ \cvar(f,S) \le \vf(S) \sum_{k=1}^m |f(t_k) - f(t_{k-1})| \le \vf(S) \cdot \myvar f \]
and hence that
  \[ \myvar(f,\sigma) \le \myvar(f) \]
which completes the proof.
\end{proof}

\begin{cor}
If $\sigma \subseteq \mR$ is nonempty and compact, then $BV(\sigma)$ is the classical space of functions of bounded variation on $\sigma$.
\end{cor}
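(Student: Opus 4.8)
The plan is to read this corollary off directly from Proposition~\ref{varreal}, since all the substantive work has already been done there. Recall that for a nonempty compact $\sigma \subseteq \mR$, the classical space of functions of bounded variation (in the sense of Saks \cite{S}) is $\{f : \sigma \to \mC \st \myvar(f) < \infty\}$, equipped with the norm $\|f\|_\infty + \myvar(f)$, where $\myvar(f)$ denotes the classical variation computed using the order on $\mR$. Proposition~\ref{varreal} asserts precisely that $\myvar(f,\sigma) = \myvar(f)$ for every $f : \sigma \to \mC$.

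Given this, the argument is immediate. The condition $\normbv{f} = \|f\|_\infty + \myvar(f,\sigma) < \infty$ defining membership in $BV(\sigma)$ coincides with the condition $\|f\|_\infty + \myvar(f) < \infty$ defining membership in the classical space, so the two sets of functions are equal. Moreover, since $\myvar(f,\sigma) = \myvar(f)$, the two norms agree exactly, so the identification is isometric; it is trivially an algebra isomorphism as well, both products being pointwise multiplication. There is no genuine obstacle in this step: the only point requiring any care is the (entirely routine) bookkeeping check that the classical definition one wishes to match is indeed the order-based variation with exactly this norm, rather than some inequivalent variant of the sort catalogued by Clarkson and Adams \cite{CA}.
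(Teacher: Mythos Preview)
Your proposal is correct and matches the paper's approach exactly: the paper states this corollary immediately after Proposition~\ref{varreal} with no further argument, since once $\myvar(f,\sigma) = \myvar(f)$ is established, the equality of the two spaces (and of their norms) is automatic. Your write-up simply spells out this bookkeeping explicitly.
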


By the affine invariance of these spaces (Theorem~\ref{aff-inv}) essentially the same is true whenever $\sigma$ is contained in any line in the plane.

\subsection{Functions constant on lines}

Calculating $\myvar(f,\sigma)$ for a function $f$ is generally quite challenging. One class of functions which is easier to deal with are the functions which only vary in one direction. In this section we shall make that idea precise and then use this fact to show that all polynomials in two real variables are of bounded variation.

Suppose that $\sigma$ is a nonempty compact subset of $\mC$. Let $\sigma_\mR$ denote the projection of $\sigma$ on to the real axis and suppose that $u: \sigma_\mR \to \mC$. The function $u$ can be extended to all of $\sigma$ by setting
  \[ {\hat u}(x+iy) = u(x), \qquad x+iy \in \sigma, \]
so that $\hat u$ is constant on all vertical lines.

\begin{thm}\label{ext-1var}
Suppose that $\sigma$ and $u$ are as above. If $u \in BV(\sigma_\mR)$ then ${\hat u} \in BV(\sigma)$ and $\normbv{{\hat u}} \le \norm{u}_{BV(\sigma_\mR)}$.
\end{thm}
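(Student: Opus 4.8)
The plan is to reduce the two-dimensional variation of $\hat u$ on $\sigma$ to the one-dimensional variation of $u$ on $\sigma_\mR$ by comparing, for each finite list $S$ in $\sigma$, the curve variation $\cvar(\hat u, S)$ against $\vf(S) \cdot \myvar(u)$. Since $\hat u$ only depends on the real part of its argument, $\cvar(\hat u, S)$ is already a sum of the form $\sum_i |u(x_i) - u(x_{i-1})|$ where $x_j = \Re(\vecx_j)$; that is, $\cvar(\hat u, S) = \cvar(u, S_\ell)$ where $\ell$ is the real axis and $S_\ell$ is the list of real-part projections of $S$. So the inequality I need is
  \[ \cvar(u, S_\ell) \le \vf(S) \cdot \myvar(u). \]

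First I would invoke Proposition~\ref{vf-proj}: $\vf(S_\ell) \le \vf(S)$. Next, since $S_\ell$ is a list of points on the real line, I can apply the machinery already developed for $\sigma \subseteq \mR$ — specifically the argument inside the proof of Proposition~\ref{varreal}. That proof shows precisely that for a list $T$ of points in a compact subset of $\mR$, $\cvar(f, T) \le \vf(T) \cdot \myvar(f)$, by passing to the refined list $\hat T$, collecting repeated edges with multiplicities $c_k$, and observing $\vf(T) \ge \max_k c_k$. Applying this with $T = S_\ell$ and $f = u$ gives $\cvar(u, S_\ell) \le \vf(S_\ell) \cdot \myvar(u) \le \vf(S) \cdot \myvar(u)$. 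Dividing by $\vf(S)$ and taking the supremum over all $S$ yields $\myvar(\hat u, \sigma) \le \myvar(u)$. Combined with the obvious $\|\hat u\|_\infty = \|u\|_\infty$ (the range of $\hat u$ is exactly the range of $u$, since $\sigma_\mR$ is the projection of $\sigma$), this gives $\normbv{\hat u} \le \norm{u}_{BV(\sigma_\mR)}$, and in particular $\hat u \in BV(\sigma)$ whenever $u \in BV(\sigma_\mR)$.

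The one point requiring a little care — and the only real obstacle — is making rigorous that $\cvar(\hat u, S) = \cvar(u, S_\ell)$ is genuinely the curve variation of $u$ along the \emph{list} $S_\ell$ in the sense needed to quote Proposition~\ref{varreal}'s internal estimate; this is immediate from the definitions since $\hat u(\vecx_j) = u(\Re \vecx_j)$ by construction. A cleaner alternative, which avoids re-deriving anything, is to observe that $\myvar(u, \sigma_\mR) = \myvar(u)$ by the Corollary to Proposition~\ref{varreal}, and then to note that for any list $S$ in $\sigma$ we have $\cvar(\hat u, S) = \cvar(u, S_\ell)$ with $S_\ell$ a list in $\sigma_\mR$, so $\cvar(\hat u, S)/\vf(S) \le \cvar(u, S_\ell)/\vf(S_\ell) \le \myvar(u,\sigma_\mR)$, using Proposition~\ref{vf-proj} for the middle inequality and the definition of $\myvar(u,\sigma_\mR)$ as a supremum for the last. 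Either route is short; I would present the second, since it reuses rather than repeats the projection and refinement arguments.
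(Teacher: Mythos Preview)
Your proposal is correct, and the second route you describe---computing $\cvar(\hat u,S)=\cvar(u,S_\mR)$, applying Proposition~\ref{vf-proj} to get $\vf(S_\mR)\le\vf(S)$, and then bounding by $\myvar(u,\sigma_\mR)$---is exactly the paper's proof. Your first route is a minor variant that passes through the classical $\myvar(u)$ via Proposition~\ref{varreal} rather than going directly to $\myvar(u,\sigma_\mR)$, but since these coincide it is not a genuinely different argument.
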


\begin{proof}
Suppose that $S = \plist{\vecx_0,\dots,\vecx_n}$ is a list of points in $\sigma$ and let $S_\mR = \plist{x_0,\dots,x_n}$ be the list of the  real parts of these points (in the same order). By Proposition~\ref{vf-proj}, $\vf(S_\mR) \le \vf(S)$ and so
  \[ \frac{\cvar({\hat u},S)}{\vf(S)} = \frac{\cvar(u,S_\mR)}{\vf(S)} \le \frac{\cvar(u,S_\mR)}{\vf(S_\mR)} \le \myvar(u,\sigma_\mR).\]
Taking the supremum over all lists $S$ and adding $\norminf{\hat u} = \norminf{u}$ completes the proof.
\end{proof}

Again, by the affine invariance of the variation, one can easily extend this result by replacing the real line with any other line $L$ in the plane.

It is worth noting that in the above theorem, one does not get equality in general.

\begin{ex}
Let $\sigma = \{(-1,1),(0,0),(1,1)\}$, so $\sigma_\mR = \{-1,0,1\}$, and $u(x) =|x|$. Then we have that $\myvar(u,\sigma_{\mR}) = 2$ but $\myvar({\hat u},\sigma) = 1$.

To see this, note that taking $S = [(-1,1),(0,0)]$ shows that $\myvar({\hat u},\sigma) \ge 1$. Also, we could consider ${\hat u}$ to be the extension function ${\hat u}(x+iy) = v(y)$ where $v \in BV(\{0,1\})$, $v(y) = y$, and so by the theorem, $\myvar({\hat u},\sigma) \le \myvar(v,\{0,1\}) = 1$.
\end{ex}

\subsection{The case $\sigma=\mT$}

In \cite{BG2}, Berkson and Gillespie defined a notion of variation for functions defined on the complex unit circle $\mT$. It was a rather natural definition in the sense that it used the standard parametrisation of the circle to represent a function defined on $\mT$ as a function defined on $[0,2\pi]$. To be explicit, for a function $f:\mT\to\mC$, its variation in the sense of Berkson and Gillespie is defined to be $\myvar_{\mT}(f)=~\myvar_{[0,2\pi]}f\big(e^{i(\cdot)}\big)$. The set of all functions on the circle satisfying $\myvar_{\mT}(f)<\infty$ forms a Banach algebra when endowed with the norm $\|f\|=\|f\|_{\infty}+\myvar_{\mT}(f)$. We shall denote this algebra by $BV_{BG}(\mT)$. This definition was used then to define the class of trigonometrically well-bounded operators. As part of the aim of introducing our current definition was to generalize the theories of well-bounded and trigonometrically well-bounded operators, it was important that in fact $BV(\mT)=BV_{BG}(\mT)$. As was shown in \cite{AD2}, the two algebras are isomorphic, but not isometrically. Instead, we obtain an equivalence of norms. The proof in \cite{AD2} was given using the original definition of variation, so we give here a proof in line with the current development.

\begin{lem}\label{bv-T1}
For all $f\in BV(\mT)$, $\myvar_{\mT}(f)\leq 2\myvar(f,\mT)$.
\end{lem}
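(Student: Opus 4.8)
The plan is to realise each of the sums approximating $\myvar_\mT(f)=\myvar_{[0,2\pi]}\bigl(f(e^{i(\cdot)})\bigr)$ as a curve variation of $f$ along a list of points of $\mT$, and then to estimate it using the definition of $\myvar(f,\mT)$. Write $g(t)=f(e^{it})$ and fix a partition $\mathcal P=\{0=t_0<t_1<\dots<t_n=2\pi\}$; then $\myvar(g,\mathcal P)=\sum_{j=1}^n|g(t_j)-g(t_{j-1})|=\cvar(f,S)$ for $S=[e^{it_0},e^{it_1},\dots,e^{it_n}]$. Since $e^{it_0}=e^{it_n}=1$, the curve $\gamma_S$ is a closed polygon inscribed in $\mT$, and a secant line through the repeated vertex $1$ picks out three crossing segments, so $\vf(S)=3$ and this list only delivers the bound $3\,\myvar(f,\mT)$. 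The point of the proof is to \emph{open} the loop at a point where $f$ is continuous, replacing $S$ by a list whose vertices run monotonically through \emph{less} than a full revolution of $\mT$ and which therefore has variation factor only $2$.

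Two preliminary reductions. First, $f\in BV(\mT)$ is continuous off a countable subset of $\mT$: if $z_1,\dots,z_N$ are distinct points at which $f$ oscillates by at least $\varepsilon$, pick $w_i$ so close to $z_i$ (on the short arc from $z_i$ towards the next $z$) that $|f(w_i)-f(z_i)|\ge\varepsilon/2$; then $[z_1,w_1,z_2,w_2,\dots,z_N,w_N]$ has its vertices in convex position and in angular order over an arc of length $<2\pi$, hence (by the geometric fact below) variation factor at most $2$, while its curve variation is at least $N\varepsilon/2$, so $N\le 4\,\myvar(f,\mT)/\varepsilon$. Thus we may fix $\tau\in(0,2\pi)$ with $f$ continuous at $e^{i\tau}$, and after refining $\mathcal P$ to contain $\tau$ (refinement only increases $\myvar(g,\mathcal P)$) we may — since $\myvar(g,\mathcal P)$ is a sum around the cycle $e^{it_0}\to\dots\to e^{it_n}=e^{it_0}$ and so is independent of where one starts — just as well assume $f$ is continuous at $e^{it_0}=1$.

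Now for small $\delta>0$ put $S_\delta=[e^{i\delta},e^{it_1},\dots,e^{it_{n-1}},e^{i(2\pi-\delta)}]$ (with $\delta$ small enough that $\delta<t_1$ and $2\pi-\delta>t_{n-1}$). Its curve variation agrees with $\sum_{j=1}^n|g(t_j)-g(t_{j-1})|$ except that $e^{i\delta}$ and $e^{i(2\pi-\delta)}$ replace $e^{it_0}=e^{it_n}=1$ in the first and last terms, so by continuity of $f$ at $1$, $\cvar(f,S_\delta)\to\myvar(g,\mathcal P)$ as $\delta\to0^+$. The crux is that $\vf(S_\delta)\le2$: the vertices of $S_\delta$ are distinct, lie in convex position on $\mT$, and are listed in angular order over an arc of length $2\pi-2\delta<2\pi$, so $\gamma_{S_\delta}$ is a simple polygonal arc equal to the boundary $\partial K$ of the convex hull $K$ of those vertices minus the relative interior of the single chord joining its endpoints. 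For any line $\ell$, $\ell\cap\partial K$ is at most two points, or a single segment lying in one edge of $\partial K$; intersecting with $\gamma_{S_\delta}$ and using that $\gamma_{S_\delta}$ maps $[0,1]$ homeomorphically onto its image, the set $\{t:\gamma_{S_\delta}(t)\in\ell\}$ has at most two connected components, whence $\vf(S_\delta,\ell)\le2$ by Proposition~\ref{vf-component}. Consequently $\cvar(f,S_\delta)\le2\,\myvar(f,\mT)$ for all small $\delta$; letting $\delta\to0^+$ gives $\myvar(g,\mathcal P)\le2\,\myvar(f,\mT)$, and taking the supremum over $\mathcal P$ yields the lemma.

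The hard part is the geometric estimate $\vf(S_\delta)\le2$ — the statement that a list of points of $\mT$ traversed monotonically through less than one full turn has variation factor $2$; the reduction to a point of continuity of $f$ (and the countability of the discontinuity set that underpins it) is routine, but genuinely needed, since one really is stuck with the constant $3$ if the loop is not opened.
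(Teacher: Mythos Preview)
Your proof is correct, and it takes a genuinely different route from the paper's argument. The paper fixes a partition $\{0=\theta_0<\dots<\theta_n=2\pi\}$ and forms the list $S_N$ that starts at $e^{i\theta_0}$ and then repeats the sublist $[e^{i\theta_1},\dots,e^{i\theta_n}]$ a total of $N$ times, so that $\cvar(f,S_N)=N\sum_j|f(e^{i\theta_j})-f(e^{i\theta_{j-1}})|$ while $\vf(S_N)=2N+1$; letting $N\to\infty$ kills the extra $+1$ and yields the factor $\tfrac12$. In other words, the paper \emph{amortises} the nuisance crossing at the repeated vertex $1$ by winding around many times, whereas you \emph{remove} it by opening the loop at a point of continuity and letting the gap close. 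The paper's trick is slicker in that it needs no regularity of $f$ whatsoever and no limiting argument in $\delta$; your approach is more geometric and explains directly why the constant is $2$ rather than $3$, at the cost of the auxiliary (but easy) step showing that a $BV(\mT)$ function has a point of continuity. Both arguments rest on the same geometric fact --- that points on $\mT$ listed in angular order over an arc of length $<2\pi$ give a list of variation factor at most $2$ --- so the essential content is the same; only the device for handling the closure of the loop differs.
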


\begin{proof}
Suppose that
$f\in BV(\mT)$ and $\{0=\theta_0 < \theta_1 < \dots < \theta_n=2\pi\}$
is a partition of $[0,2\pi]$ with $n \ge 2$. For $N\in\mZ^+$, define $S_N$ to be the list of points obtained by starting at $e^{i\theta_0}$ and then repeating $N$ times the sublist $S=[e^{i\theta_1},\dots , e^{i\theta_n}]$. Let $\ell$ be the line which passes through the circle at $e^{i\theta_0}$ and the midpoint on the circle between $e^{i\theta_1}$ and $e^{i\theta_2}$. Then $\vf(S_N) \ge \vf(S_N,\ell) = 2N+1$.
Hence
\begin{align*}
\myvar(f,\mT) &	= \sup_S\frac{\cvar(f,S)}{\vf(S)}\\
&	 \geq \sup_N \frac{\cvar(f,S_N)}{\vf(S_N)}\\
&	 = \sup_N \frac{N}{2N+1} \sum_{j=1}^{n} |f(e^{i\theta_j})-f(e^{i\theta_{j-1}})|,
\end{align*}
from which it follows that $\myvar_{\mT}(f)\leq 2\myvar(f,\mT)$.
\end{proof}

It follows that $BV(\mT)\subseteq BV_{BG}(\mT)$. Establishing that the reverse inclusion holds requires some work. Before that, it is worth noting that the bound given in Lemma~\ref{bv-T1} is sharp, since if $f$ is taken to be the characteristic function of $\{1\}$, then $\myvar_{\mT}(f)=2$, and $\myvar(f,\mT)$ can be calculated to be 1 as follows. By taking a simple list, such as $S=[1,i]$, one sees that $\myvar(f,\mT)\geq1$. For the reverse inequality, take an arbitrary finite list $S=[e^{i\theta_0}, \dots ,e^{i\theta_n}]$ of points in $\mT$. Moreover, we can assume that $e^{i\theta_j}=1$ for some $j$. Then
\begin{align*}
\cvar(f,S)=\big| \{j  \,:\, |f(e^{i\theta_j})-f(e^{i\theta_{j-1}})|=1\} \big|,
\end{align*}
that is the number of times the piecewise linear curve $\gamma_S$ leaves or returns to 1. Consider any vertical line $\ell$ such that the only point of $S$ lying to the right of $\ell$ is 1 and all other points of $S$ lie to the left of $\ell$. Then one finds that $\cvar(f,S)=\vf(S,\ell)\leq\vf(S)$. This establishes the reverse inequality $\myvar(f,\mT) \leq 1$, and consequently $\myvar_{\mT}(f) = 2\myvar(f,\mT)$.

We now move on to the task of showing $BV_{BG}(\mT)\subseteq BV(\mT)$. We shall say that any continuous, orientation preserving bijection $\tau:\mT\to\mT$ is a \textbf{reparametrisation} of~$\mT$. For any $f:\mT\to\mC$, let $f_\tau=f\circ\tau$.

\begin{lem}\label{var-reparam}
Suppose that $f:\mT\to\mC$ and that $\tau$ is a reparametrisation of $\mT$. For a list of points $S=[\vecz_0,\dots,\vecz_n]$, let $S_\tau=\tau(S)=[\tau(\vecz_0),\dots,\tau(\vecz_n)]$. Then
\begin{enumerate}
\item $\myvar_{\mT}(f)=\myvar_{\mT}(f_\tau)$;
\item $\vf(S)=\vf(S_\tau)$;
\item $\myvar(f,S)=\myvar(f_\tau,S_\tau)$.
\end{enumerate}
\end{lem}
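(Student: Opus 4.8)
The plan is to prove the three statements of Lemma~\ref{var-reparam} essentially in reverse order of dependence, observing that (1) is actually a special case of (3) once (3) is established, and that the real content lies in (2).

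First I would dispose of statement (2). The key observation is that a reparametrisation $\tau$ need not be an affine map, so the affine invariance of $\vf$ (used repeatedly above) does not apply directly. Instead I would use Proposition~\ref{vf-component}, which characterises $\vf(S,\ell)$ as the number of connected components of $L(S,\ell) = \{t \in [0,1] : \gamma_S(t) \in \ell\}$. The subtlety is that $\gamma_S$ and $\gamma_{S_\tau}$ are genuinely different curves in the plane (the piecewise-linear segments joining $\vecz_{j-1}$ to $\vecz_j$ are mapped by $\tau$ to arcs of the circle, not to the segments joining $\tau(\vecz_{j-1})$ to $\tau(\vecz_j)$), so one cannot simply track crossings segment-by-segment along a fixed line. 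My approach would be to fix a line $\ell$ and replace it by the more flexible notion of the preimage $\tau^{-1}(\ell \cap \mT)$, which is a set of at most two points on $\mT$; a crossing of $\gamma_S$ by $\ell$ corresponds combinatorially to the list $S$ "separating" these one or two arc-points in the circular order, and this separation data is preserved under the orientation-preserving homeomorphism $\tau$. Concretely: since $\tau$ is an orientation-preserving bijection of $\mT$, for any two points $p,q \in \mT$ and any $\vecz_{j-1},\vecz_j \in S$, the pair $\vecz_{j-1},\vecz_j$ lies on opposite (closed/open as appropriate) arcs determined by $\{p,q\}$ if and only if $\tau(\vecz_{j-1}),\tau(\vecz_j)$ does; translating Definition~\ref{defn:x-seg}'s three cases into this circular-separation language, $s_j$ is a crossing segment of $S$ on $\ell$ exactly when $s_j^\tau$ is a crossing segment of $S_\tau$ on the (possibly different) line $\ell'$ chosen so that $\ell' \cap \mT = \tau(\ell \cap \mT)$. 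This gives a bijection between crossing segments, hence $\vf(S,\ell) = \vf(S_\tau, \ell')$, and taking the maximum over $\ell$ yields $\vf(S) = \vf(S_\tau)$.

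Statement (3) is then nearly immediate: by definition $\cvar(f_\tau, S_\tau) = \sum_{i=1}^n |f_\tau(\tau(\vecz_i)) - f_\tau(\tau(\vecz_{i-1}))| = \sum_{i=1}^n |f(\vecz_i) - f(\vecz_{i-1})| = \cvar(f, S)$, so with the caveat that the statement really should read $\cvar(f,S) = \cvar(f_\tau, S_\tau)$ (matching the notation $\cvar$ rather than $\myvar$ for a list), the equality holds trivially. Then statement (1) follows because $\myvar_\mT(f) = \myvar_{[0,2\pi]}(f(e^{i(\cdot)}))$, and choosing $\tau$ to be the identity is too weak — rather, one notes that $\myvar(f,\mT) = \sup_S \cvar(f,S)/\vf(S) = \sup_S \cvar(f_\tau, S_\tau)/\vf(S_\tau) = \myvar(f_\tau, \mT)$ using (2) and (3) together with the fact that $S \mapsto S_\tau$ is a bijection on finite lists of points in $\mT$; combined with the already-established norm equivalence $\myvar_\mT \asymp \myvar(\cdot,\mT)$ this is not quite $\myvar_\mT(f) = \myvar_\mT(f_\tau)$ on the nose, so instead I would prove (1) directly from the classical one-variable theory: $\tau$ induces an increasing homeomorphism of $[0,2\pi]$ (after unwrapping), and classical $BV$ on an interval is invariant under increasing homeomorphic change of variable, which gives $\myvar_\mT(f) = \myvar_\mT(f_\tau)$ immediately.

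The main obstacle will be the careful bookkeeping in statement (2): matching up the three types of crossing segment in Definition~\ref{defn:x-seg} under a map that bends straight segments into circular arcs, and in particular handling the degenerate cases (a point of $S$ lying exactly on $\ell$, a whole segment lying along $\ell$ — which for a chord of the circle can only happen if the segment is a single point). I expect the cleanest route is to avoid segment-bending entirely by working on the parameter interval: write $S$ as a list of parameter values $0 \le s_0, s_1, \dots, s_n \le 2\pi$ (lifting to $[0,2\pi]$ via the standard parametrisation), note that $\tau$ lifts to a strictly increasing homeomorphism $\tilde\tau$ of $[0,2\pi]$, and check that the combinatorial "crossing pattern" of the list relative to the at-most-two points $\{\ell \cap \mT\}$ depends only on the cyclic order of $s_0, \dots, s_n$ and those points — data manifestly preserved by $\tilde\tau$. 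Once that reduction is set up correctly, the rest is routine.
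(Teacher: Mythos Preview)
Your approach is essentially the paper's: for (2) you replace $\ell$ by the chord $\ell'$ through $\tau(\ell\cap\mT)$ and match crossing segments type by type---this is exactly the paper's $\ell_\tau$, with strict convexity of $\mT$ doing the work you phrase as ``circular-order preservation''; for (3) both you and the paper simply observe $\cvar$ is preserved, and for (1) the paper just says ``clear'' where you supply the one-variable change-of-variable argument. One small slip: your identity $f_\tau(\tau(\vecz_i))=f(\vecz_i)$ would require $f_\tau=f\circ\tau^{-1}$ rather than $f\circ\tau$ as defined in the paper, but the paper's own proof asserts the same equality $\cvar(f,S)=\cvar(f_\tau,S_\tau)$, so this is a notational inconsistency in the source rather than a defect in your argument.
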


\begin{proof}
It should be clear that (1) holds. For (2), suppose that $\ell$ is a line in $\mC$ such that $\vf(S,\ell)=\vf(S)$. The cases where $\ell$ is disjoint from $\mT$ or where $\ell$ only touches $\mT$ at a single point are easily dealt with, so we will assume that $\ell$ intersects $\mT$ at two distinct points, say $\vecx$ and $\vecy$. Define $\ell_\tau$ to be the line passing through $\tau(\vecx)$ and $\tau(\vecy)$. Since $\mT$ is strictly convex, it is then simple to check that if $\ls[ \vecz_j,\vecz_{j+1}]$ is a crossing segment of $S$ on $\ell$, then $\ls[ \tau(\vecz_j),\tau(\vecz_{j+1})]$ is a crossing segment of $S_\tau$ on $\ell_\tau$ of the same type. Thus $\vf(S)\leq\vf(S_\tau)$. A similar argument establishes the reverse inequality, and so $\vf(S_\tau) = \vf(S)$. Since $\cvar(f,S)=\cvar(f_\tau,S_\tau)$, this also proves (3).
\end{proof}

\begin{thm}\label{bv-T2}
For all $f\in BV_{BG}(\mT)$, $\myvar(f,\mT)\leq\myvar_{\mT}(f)$.
\end{thm}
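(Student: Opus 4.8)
The goal is to bound $\myvar(f,\mT)$ by $\myvar_\mT(f)$, i.e.\ to control the curve variation of $f$ over an \emph{arbitrary} finite list $S$ of points of $\mT$, divided by $\vf(S)$, by the one-dimensional variation of $\theta \mapsto f(e^{i\theta})$ on $[0,2\pi]$. The difficulty, as contrasted with Lemma~\ref{bv-T1}, is that an arbitrary list $S$ on the circle can wind around many times and in either direction, so $\gamma_S$ is not the graph of a monotone parametrisation of an interval; a line $\ell$ meeting $\mT$ in two points can be crossed far fewer times than the naive "number of arcs traversed" count, so $\vf(S)$ can be much smaller than the number of summands in $\cvar(f,S)$. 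The plan is to reduce to the already-understood real-interval situation by straightening the circle out, using the reparametrisation invariance (Lemma~\ref{var-reparam}) together with Proposition~\ref{varreal} applied to an interval.

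First I would fix a list $S = [e^{i\theta_0}, \dots, e^{i\theta_n}]$ of points of $\mT$, with each $\theta_j$ chosen in $[0,2\pi)$. The idea is to replace the embedding of $\mT$ in the plane by an embedding in which $\mT$ "becomes" (a scaled copy of) the interval $[0,2\pi]$ as far as crossing segments are concerned — but since we cannot literally do that ($\mT$ is a closed curve), the cleaner route is: use Lemma~\ref{var-reparam} to assume the $\theta_j$ are in "general position" if convenient, then relate $\cvar(f,S)$ and $\vf(S)$ directly. Concretely, I would compare $S$ to the list $T = [\theta_0, \theta_1, \dots, \theta_n]$ of real numbers (the angles themselves, in $[0,2\pi)$) viewed as a list of points in the real interval $\sigma' = $ (the closure of) $\{\theta_0,\dots,\theta_n\} \subseteq [0,2\pi]$. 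By construction $\cvar(f,S) = \cvar(g, T)$ where $g(\theta) = f(e^{i\theta})$. By Proposition~\ref{varreal}, $\cvar(g,T) \le \vf(T)\,\myvar(g) \le \vf(T)\,\myvar_\mT(f)$, where $\vf(T)$ is the variation factor of $T$ as a list of \emph{real} points, which (by the discussion preceding Proposition~\ref{varreal}) counts the largest number of times $\gamma_T$ revisits a single point of the real line. So the whole problem reduces to the combinatorial claim
\[
  \vf(T) \;\le\; \vf(S),
\]
i.e.\ if the piecewise-linear path through the angles $\theta_0,\dots,\theta_n$ in $[0,2\pi]$ passes through some point $\phi \in [0,2\pi]$ a total of $k$ times, then some line in the plane crosses $\gamma_S$ at least $k$ times.

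The heart of the argument is therefore this geometric claim, and that is the step I expect to be the main obstacle. Here is the approach I would take: suppose the real path $\gamma_T$ crosses the level $\phi \in (0,2\pi)$ exactly $k$ times — meaning there are $k$ indices $j$ with $\phi$ strictly between $\theta_{j-1}$ and $\theta_j$ (plus possibly endpoint contributions, which I would handle by the general-position reduction via Lemma~\ref{var-reparam}, moving $\phi$ slightly off all the $\theta_j$ and the endpoints). Each such crossing corresponds to the arc of $\gamma_S$ from $e^{i\theta_{j-1}}$ to $e^{i\theta_j}$ — a chord of the circle — passing "over" the radial direction at angle $\phi$, i.e.\ the chord separates the point $e^{i\phi} \in \mT$ from... hmm, a single line won't obviously see all of these. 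The cleaner device: pick the line $\ell$ through $e^{i\phi}$ and $e^{i(\phi+\pi)}$ — no; better, use a line tangent-ish near $e^{i\phi}$, or observe that a chord $\overline{e^{i\alpha}e^{i\beta}}$ crosses the line $\ell_\phi$ through $e^{i\phi}$ perpendicular to the radius at $e^{i\phi}$ precisely when $\phi$ lies (mod $2\pi$) in the shorter... this needs care because a chord subtending more than $\pi$ is traversed. I would instead argue: after applying Lemma~\ref{var-reparam} with a reparametrisation $\tau$ that compresses all the points $e^{i\theta_0},\dots,e^{i\theta_n}$ into a short arc $A$ of $\mT$ around $e^{i\phi}$ \emph{while preserving their cyclic order and the value of $\phi$'s position among them} — so that now every relevant chord is short and "below" the tangent line to $\mT$ at the far point — then the tangent line to $\mT$ at the antipode, or rather any line separating $A$ from the rest of $\mT$ and positioned to cut $A$ at $e^{i\phi}$, will cross $\gamma_{S_\tau}$ once for each of the $k$ sub-chords that straddle $\phi$; since by Lemma~\ref{var-reparam}(2) $\vf(S) = \vf(S_\tau)$, this gives $\vf(S) \ge k$. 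Taking the supremum over $\phi$ yields $\vf(S) \ge \vf(T)$, and combining with the reduction above gives $\cvar(f,S)/\vf(S) \le \myvar_\mT(f)$ for every $S$, hence $\myvar(f,\mT) \le \myvar_\mT(f)$, as required. The delicate points to nail down are the general-position reduction and the precise choice of separating line so that each straddling sub-chord is genuinely a crossing segment of the required type; everything else is bookkeeping with Lemma~\ref{var-reparam} and Proposition~\ref{varreal}.
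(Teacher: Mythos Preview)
Your plan is correct and close in spirit to the paper's proof, but the execution differs in one key place. Both arguments reduce to a one-dimensional variation bound via the reparametrisation invariance of Lemma~\ref{var-reparam}; the difference is in how the variation-factor inequality is established.

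The paper reparametrises $S$ so that all points lie in the upper semicircle $\mT^+$, and then projects orthogonally onto the real axis. Since $\hat{f_\tau}(x)=f_\tau(x+i\sqrt{1-x^2})$ takes the same values as $f_\tau$, one has $\cvar(f,S)=\cvar(\hat{f_\tau},(S_\tau)_\mR)$, and Proposition~\ref{vf-proj} gives $\vf((S_\tau)_\mR)\le\vf(S_\tau)=\vf(S)$ in one stroke; Proposition~\ref{varreal} then finishes the job. Your route instead keeps the original angle list $T=[\theta_0,\dots,\theta_n]\subset[0,2\pi)$ and aims to prove the combinatorial inequality $\vf(T)\le\vf(S)$ by a separate ``compress into a short arc'' reparametrisation. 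That works, but it is more laborious than necessary, and your sketch is a little loose about why the linear order of the $\theta_j$ in $[0,2\pi)$ (which is what governs the level crossings of $\gamma_T$) survives the compression: a reparametrisation preserves only the \emph{cyclic} order, so you must also ensure $\tau(1)$ lands outside the short arc $A$, which you do not say explicitly.

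In fact the compression is avoidable even within your framework. If $\phi\in(0,2\pi)$ is a level crossed $k$ times by $\gamma_T$ (in general position), take $\ell$ to be the chord of $\mT$ through $1$ and $e^{i\phi}$. The two open arcs of $\mT\setminus\{1,e^{i\phi}\}$ are $\{e^{i\theta}:0<\theta<\phi\}$ and $\{e^{i\theta}:\phi<\theta<2\pi\}$, lying on opposite sides of $\ell$ by strict convexity; hence whenever $\theta_{j-1},\theta_j$ straddle $\phi$ in $[0,2\pi)$, the segment $\ls[e^{i\theta_{j-1}},e^{i\theta_j}]$ is a type-(1) crossing segment on $\ell$, giving $\vf(S)\ge\vf(S,\ell)\ge k$. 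This is exactly the content packaged by the paper's use of Proposition~\ref{vf-proj} after moving the points into a semicircle.
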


\begin{proof}
Fix a list of points $S=[\vecz_0,\dots,\vecz_n]$ in $\mT$ and choose a paramterisation $\tau$ such that $\text{Im}(\vecz_j)>0$ for each $j$. By Lemma~\ref{var-reparam}, we have that $\myvar(f,S)=\myvar(f_\tau,S_\tau)$.
The map $h(x) = x+i\sqrt{1-x^2}$, $x \in [-1,1]$ has range $\mT^+$, the upper half of $\mT$.  Define $\hat{f_\tau}:[-1,1] \to \mC$ by $\hat{f_\tau}(x)=f_\tau(h(x))$. By Proposition~\ref{varreal} and Theorem~\ref{ext-1var}, we have that
\begin{align*}
\frac{\cvar(f,S)}{\vf(S)}
  = \frac{\cvar(f_\tau,S_\tau)}{\vf(S_\tau)}
  \leq \frac{\cvar\bigl(\hat{f_\tau},(S_\tau)_{\mR}\bigr)}{\vf\big((S_\tau)_{\mR}\big)} \leq \myvar_{[-1,1]}\bigl(\hat{f_\tau}\bigr).
\end{align*}
Noting that
\begin{align*}
\myvar_{[-1,1]}\bigl(\hat{f_\tau}\bigr)
   =\myvar_{[0,\pi]}(f_\tau(e^{i(\cdot)}))
   \leq {\ts \myvar_\mT(f_\tau)}
   = {\ts \myvar_\mT(f)},
\end{align*}
on taking the supremum over all lists $S$ we conclude that $\myvar(f,\mT) \leq \myvar_\mT(f)$.
\end{proof}

Combining Lemma~\ref{bv-T1} with Theorem~\ref{bv-T2} yields the following corollary.

\begin{cor}\label{bg-bv}
$BV(\mT)$ is isomorphic to $BV_{BG}(\mT)$ as Banach algebras.
\end{cor}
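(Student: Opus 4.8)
The plan is to obtain the two-sided norm estimate directly from the variation inequalities already established. By Lemma~\ref{bv-T1}, every $f \in BV(\mT)$ satisfies $\myvar_\mT(f) \le 2\myvar(f,\mT)$, so $BV(\mT) \subseteq BV_{BG}(\mT)$ with $\|f\|_{BV_{BG}(\mT)} = \|f\|_\infty + \myvar_\mT(f) \le \|f\|_\infty + 2\myvar(f,\mT) \le 2\normbv{f}$. Conversely, by Theorem~\ref{bv-T2}, every $f \in BV_{BG}(\mT)$ satisfies $\myvar(f,\mT) \le \myvar_\mT(f)$, so $BV_{BG}(\mT) \subseteq BV(\mT)$ with $\normbv{f} = \|f\|_\infty + \myvar(f,\mT) \le \|f\|_\infty + \myvar_\mT(f) = \|f\|_{BV_{BG}(\mT)}$. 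Hence $BV(\mT) = BV_{BG}(\mT)$ as sets, and the identity map $\iota : BV(\mT) \to BV_{BG}(\mT)$ satisfies $\|\iota f\|_{BV_{BG}(\mT)} \le 2\normbv{f}$ and $\|\iota^{-1} g\|_{BV(\mT)} \le \|g\|_{BV_{BG}(\mT)}$, which is precisely an isomorphism of normed spaces.

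Next I would check that $\iota$ respects the algebra structure. This is immediate: $\iota$ is literally the identity on the common underlying set of functions $\mT \to \mC$, so it is linear and multiplicative by definition, and the preceding paragraph shows it is bounded with bounded inverse. Since both $BV(\mT)$ (by the theorem that $BV(\sigma)$ is a Banach algebra) and $BV_{BG}(\mT)$ (as noted in the text) are complete, $\iota$ is an isomorphism of Banach algebras. I might remark explicitly that the isomorphism is not isometric, referring back to the sharpness discussion following Lemma~\ref{bv-T1}: for $f = \chi_{\{1\}}$ one has $\myvar_\mT(f) = 2 = 2\myvar(f,\mT)$, so the constant $2$ in Lemma~\ref{bv-T1} cannot be improved, and the two norms genuinely differ.

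There is essentially no obstacle here — the corollary is a bookkeeping consequence of Lemma~\ref{bv-T1} and Theorem~\ref{bv-T2}, which together pin down $\myvar(f,\mT)$ and $\myvar_\mT(f)$ to within a factor of $2$ of each other. The only point requiring a word of care is that an isomorphism of Banach algebras need not be isometric; the equivalence of norms established above is exactly what is being claimed, and the example $\chi_{\{1\}}$ confirms that isometry genuinely fails. So the proof reduces to writing:

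\begin{proof}
By Lemma~\ref{bv-T1}, if $f \in BV(\mT)$ then $\myvar_\mT(f) \le 2\myvar(f,\mT) < \infty$, so $f \in BV_{BG}(\mT)$ and
  \[ \|f\|_{BV_{BG}(\mT)} = \norm{f}_\infty + \myvar_\mT(f) \le \norm{f}_\infty + 2\myvar(f,\mT) \le 2\normbv{f}. \]
Conversely, by Theorem~\ref{bv-T2}, if $f \in BV_{BG}(\mT)$ then $\myvar(f,\mT) \le \myvar_\mT(f) < \infty$, so $f \in BV(\mT)$ and
  \[ \normbv{f} = \norm{f}_\infty + \myvar(f,\mT) \le \norm{f}_\infty + \myvar_\mT(f) = \|f\|_{BV_{BG}(\mT)}. \]
Thus $BV(\mT)$ and $BV_{BG}(\mT)$ consist of the same functions, and the identity map between them is an algebra isomorphism satisfying $\normbv{f} \le \|f\|_{BV_{BG}(\mT)} \le 2\normbv{f}$ for all $f$. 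Since both spaces are complete, this is an isomorphism of Banach algebras. It is not in general isometric: for $f = \chi_{\{1\}}$ one has $\myvar(f,\mT) = 1$ and $\myvar_\mT(f) = 2$, as computed above.
\end{proof}
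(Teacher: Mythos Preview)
Your proof is correct and follows exactly the approach the paper intends: the corollary is stated immediately after Theorem~\ref{bv-T2} with the remark that it follows by combining Lemma~\ref{bv-T1} with Theorem~\ref{bv-T2}, and your argument simply makes that combination explicit via the two-sided norm estimate and the observation that the identity map is multiplicative. Your additional remark on non-isometry via $\chi_{\{1\}}$ is also consistent with the paper's earlier discussion.
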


\begin{thm}\label{conv-curve}
If $\sigma$ is a strictly convex closed curve, then $BV(\sigma)$ is isometrically isomorphic to $BV(\mT)$ as Banach algebras.
\end{thm}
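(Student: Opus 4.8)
The plan is to realise the isomorphism concretely as composition with a homeomorphism $\psi:\mT\to\sigma$ and to check that this homeomorphism preserves every quantity appearing in the variation norm. Since $\sigma$ is a strictly convex closed curve it is the boundary of a compact strictly convex region; fixing a point $p_0$ in the interior of that region, each ray from $p_0$ meets $\sigma$ in exactly one point, so radial projection from $p_0$ is a continuous bijection $\sigma\to\mT$, hence (compact to Hausdorff) a homeomorphism. Let $\psi:\mT\to\sigma$ denote its inverse and, for $f:\sigma\to\mC$, put $\Phi(f)=f\circ\psi$. Then $\Phi$ is visibly a bijective algebra homomorphism from the functions on $\sigma$ onto the functions on $\mT$ with $\norminf{\Phi(f)}=\norminf{f}$; moreover lists $S=[\vecz_0,\dots,\vecz_n]$ in $\mT$ correspond bijectively, via $S\mapsto\psi(S)=[\psi(\vecz_0),\dots,\psi(\vecz_n)]$, to lists in $\sigma$, and $\cvar(\Phi(f),S)=\cvar(f,\psi(S))$. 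So the whole theorem reduces to the purely geometric identity
\[ \vf(\psi(S))=\vf(S)\qquad\text{for every list }S\text{ of points in }\mT. \]

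To prove this I would argue exactly as in the proof of Lemma~\ref{var-reparam}, the key input being the elementary fact that a line meeting a strictly convex curve does so either in exactly one point, where it is a supporting line, or in exactly two points, where it is a secant, since every point of a strictly convex body is an extreme point. Choose a line $\ell$ with $\vf(S)=\vf(S,\ell)$; as $\vf\ge 1$, such an $\ell$ is not disjoint from $\mT$. If $\ell$ is tangent to $\mT$ at a point $\vecx$, let $\ell'$ be a supporting line of $\sigma$ at $\psi(\vecx)$; if $\ell$ meets $\mT$ at two points $\vecx,\vecy$, let $\ell'$ be the secant line of $\sigma$ through $\psi(\vecx)$ and $\psi(\vecy)$. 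In either case $\ell'$ meets $\sigma$ in precisely the $\psi$-image of $\ell\cap\mT$, so $\vecz_j\in\ell$ if and only if $\psi(\vecz_j)\in\ell'$; and in the secant case the two points of $\ell\cap\mT$ split $\mT$ into two open arcs, the two points of $\ell'\cap\sigma$ split $\sigma$ into two open arcs, and the homeomorphism $\psi$ carries the former pair onto the latter pair, so $\vecz_j$ and $\vecz_{j+1}$ lie strictly on opposite sides of $\ell$ exactly when $\psi(\vecz_j)$ and $\psi(\vecz_{j+1})$ lie strictly on opposite sides of $\ell'$. Checking Definition~\ref{defn:x-seg} case by case, $\ls[\vecz_j,\vecz_{j+1}]$ is then a crossing segment of $S$ on $\ell$ of a given type if and only if $\ls[\psi(\vecz_j),\psi(\vecz_{j+1})]$ is a crossing segment of $\psi(S)$ on $\ell'$ of the same type, whence $\vf(S)=\vf(S,\ell)=\vf(\psi(S),\ell')\le\vf(\psi(S))$. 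Running the same argument with $\psi^{-1}$ in place of $\psi$ gives the reverse inequality, so $\vf(\psi(S))=\vf(S)$.

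Granting the identity, $\myvar(f,\sigma)=\sup_S\cvar(f,\psi(S))/\vf(\psi(S))=\sup_S\cvar(\Phi(f),S)/\vf(S)=\myvar(\Phi(f),\mT)$, and therefore $\normbv{f}=\norm{\Phi(f)}_{BV(\mT)}$ for every $f:\sigma\to\mC$; in particular $\Phi$ restricts to an isometric Banach algebra isomorphism of $BV(\sigma)$ onto $BV(\mT)$. The step needing genuine care is the case-by-case matching of crossing segments in the geometric identity — in particular the boundary types (2) and (3) of Definition~\ref{defn:x-seg}, and degenerate segments where consecutive points of $S$ coincide — but this is routine once the extreme-point property of strictly convex curves is in hand, and it is already implicit in the proof of Lemma~\ref{var-reparam}.
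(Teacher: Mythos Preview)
Your proof is correct and follows essentially the same approach as the paper's: both build the isomorphism via a radial bijection between $\mT$ and $\sigma$ (the paper first applies affine invariance to put the origin inside $\sigma$ and then uses the angular parametrisation $\theta\mapsto r(\theta)e^{i\theta}$, which is exactly your radial projection), and both prove the key identity $\vf(\psi(S))=\vf(S)$ by sending a line meeting one curve to the line through the corresponding pair of points on the other and matching crossing segments using strict convexity. Your treatment is slightly more explicit about the tangent case and the type-by-type matching of Definition~\ref{defn:x-seg}, whereas the paper dismisses the one-point case without loss of generality, but the substance is the same.
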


\begin{proof}
By affine invariance, we may assume that the origin lies within the interior of $\sigma$. As $\sigma$ is a convex closed curve, it can be parametrized by the angle from the positive real axis: $\gamma_\sigma:[0,2\pi]\to\mC$ where $\gamma_\sigma(\theta)=r(\theta)e^{i\theta}$ for some continuous $r:[0,2\pi]\to[0,\infty)$. We can then define a mapping $\Phi:BV(\mT)\to BV(\sigma)$ by $\Phi(f)(\gamma_\sigma(\theta))=f(e^{i\theta})$. This mapping is an isomorphism of algebras, and is an isometry. To see this, we note that for any finite list of points $S=[e^{i\theta_0},\dots,e^{i\theta_n}]$ of $\mT$, write $\hat{S}=[r(\theta_0)e^{i\theta_0},\dots,r(\theta_n)e^{i\theta_0}]$. Then it should be clear that $\cvar(f,S)=\cvar(\Phi(f),\hat{S})$. Now let $\ell$ be a line such that $\vf(S)=\vf(S,\ell)$. Without loss, $\ell$ intersects $\mT$ at two distinct points, say $e^{i\alpha}$ and $e^{i\beta}$. Let $\ell_\sigma$ be the line joining $r(\alpha)e^{i\alpha}$ and $r(\beta)e^{i\beta}$. Then it can be checked that due to the strict convexity of $\mT$ and $\sigma$ that every crossing segment of $\ell$ on $S$ corresponds to a crossing segment the same type of $\ell_\sigma$ on $\hat{S}$. Hence $\vf(S)\leq\vf(\hat{S},\ell_\sigma)\leq\vf(\hat{S})$. Similarly, $\vf(\hat{S})\leq\vf(S)$. Hence $\myvar(f,S)=\myvar(\Phi(f),\sigma)$ and $\|f\|_{BV(\mT)}=\|\phi(f)\|_{BV(\sigma)}$.
\end{proof}

\subsection{Finite sets}

Even in the case that the set $\sigma$ is finite, the definition of the variation of a function involves a supremum over an infinite family of lists of elements of the set. Of course when $\sigma$ is a finite subset of $\mR$ this supremum is achieved by taking the list $S$ to contain all the points of $\sigma$ in order, and hence is easily calculated.
 It would be desirable to have a `finitely computable' formula which applied for more general finite sets. In general, no such formula is currently known, but some information is available in special cases.

If we say that $\sigma = \{\vecz_i\}_{i=1}^m$ consists of the vertices of a convex polygon $C_\sigma$ we shall assume that the points $\vecz_1,\dots,\vecz_m$ are listed so that they are consecutive vertices if the boundary of $C_\sigma$ is followed in an anticlockwise manner.

\begin{thm}\label{finite-convex}
Suppose that $\sigma = \{\vecz_i\}_{i=1}^m$ and ${\hat \sigma} = \{\vecw_i\}_{i=1}^m$ each consist of the vertices of convex polygons $C_\sigma$ and $C_{\hat \sigma}$. Define $h: \sigma \to {\hat \sigma}$ by $h(\vecz_i) = \vecw_i$, $i=1,\dots,m$, and define $\Phi: BV(\sigma) \to BV({\hat \sigma})$ by $\Phi(f) = f \circ h^{-1}$. Then $\Phi$ is an isometric Banach algebra isomorphism.
\end{thm}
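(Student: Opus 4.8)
The plan is to reduce the statement to the analogous fact for convex curves that has essentially been established in the proof of Theorem~\ref{conv-curve}, by showing that the only data that matters for computing $\myvar(f,\sigma)$ when $\sigma$ consists of the vertices of a convex polygon is the \emph{cyclic order} of the points together with the function values, and in particular that the precise location of the vertices is irrelevant. Concretely, define $h(\vecz_i)=\vecw_i$ and $\Phi(f)=f\circ h^{-1}$; this is visibly a unital algebra isomorphism with $\norminf{\Phi(f)}=\norminf{f}$, so everything comes down to proving $\myvar(f,\sigma)=\myvar(\Phi(f),\hat\sigma)$. By symmetry it suffices to prove $\myvar(\Phi(f),\hat\sigma)\le\myvar(f,\sigma)$, i.e. that for every list $\hat S=[\vecw_{i_0},\dots,\vecw_{i_n}]$ of points of $\hat\sigma$ there is a list $S=[\vecz_{i_0},\dots,\vecz_{i_n}]$ of points of $\sigma$ (using the \emph{same} index sequence) with $\cvar(\Phi(f),\hat S)=\cvar(f,S)$ and $\vf(\hat S)\le\vf(S)$.

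The equality of curve variations is immediate from the definition of $\Phi$, so the heart of the matter is the inequality $\vf(\hat S)\le\vf(S)$ for lists built from the same index sequence. First I would handle the case where all the indices $i_0,\dots,i_n$ are distinct (a genuine subsequence of the vertices in cyclic order): then $\gamma_S$ and $\gamma_{\hat S}$ are both simple polygonal arcs whose vertices sit in convex position in the same cyclic order, so I can choose a strictly convex closed curve (say a circle) through points placed in that same cyclic order and compare $\vf$ across all three configurations using exactly the crossing-segment bookkeeping already carried out in the proofs of Lemma~\ref{var-reparam} and Theorem~\ref{conv-curve}: given a line $\ell$ achieving $\vf(\hat S,\ell)$, replace it by a line $\ell'$ through the two corresponding points of the reference circle and then by the line through the two corresponding vertices of $C_\sigma$, checking that strict convexity makes each crossing segment of a given type transfer to a crossing segment of the same type. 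For a general list $\hat S$ (with repeats and with the index sequence not monotone in the cyclic order) I would reduce to the distinct-index case: by Proposition~\ref{vf-lemma} inserting points only helps on the $\sigma$ side, and the cyclic structure means any list decomposes into monotone ``cyclic runs'' around the polygon; alternatively, one can appeal directly to Proposition~\ref{vf-component}, since $\vf(S,\ell)$ is the number of connected components of $\{t:\gamma_S(t)\in\ell\}$, a quantity that for polygonal curves inscribed in a convex body depends only on the combinatorics of how $\ell$ separates the vertices in cyclic order.

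The main obstacle I anticipate is precisely this last point: making the transfer of crossing-segment \emph{types} (1)--(3) fully rigorous when a line passes exactly through one or two vertices, or when the list revisits a vertex, since Definition~\ref{defn:x-seg} is sensitive to whether endpoints lie on $\ell$ and to the position of $\vecx_0$. The cleanest route is probably to argue that for \emph{each fixed} line $\ell$ through $C_\sigma$ one can perturb to a line $\ell_0$ through none of the vertices of $C_\sigma$ with $\vf(S,\ell)\le\vf(S,\ell_0)$ (pushing $\ell$ slightly so that each vertex on $\ell$ moves to the ``outgoing'' side), reducing everything to lines in general position, where $\vf(\cdot,\ell_0)$ literally counts sign changes of the sequence of half-plane indicators and this is manifestly determined by the cyclic order of the indices alone. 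Once the general-position case is settled, the perturbation lemma and Proposition~\ref{vf-component} glue the pieces together, and the isometry statement follows; the algebra-isomorphism and norm claims are then routine and can be stated without further comment.
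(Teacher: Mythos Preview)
Your proposal follows the same core idea as the paper---reduce the isometry to showing $\vf(S)=\vf(\hat S)$ for lists built from the same index sequence, then transfer lines via the cyclic-order combinatorics exactly as in Theorem~\ref{conv-curve}---and indeed the paper's proof is nothing more than a one-line reference to that theorem, asserting that for any list $S$ and its image $\hat S=h(S)$ one checks $\vf(S)=\vf(\hat S)$ and $\cvar(f,S)=\cvar(\Phi(f),\hat S)$ just as there.

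Where you diverge is in execution, and you overcomplicate. The detour through ``distinct indices'' is both unnecessary and mis-stated: distinct indices $i_0,\dots,i_n$ do \emph{not} force $\gamma_S$ to be a simple arc unless they happen to occur in cyclic order (consider $[\vecz_1,\vecz_3,\vecz_2,\vecz_4]$), so that case gives you nothing special. The subsequent ``cyclic runs'' reduction via Proposition~\ref{vf-lemma} is also murky, since inserting points into $S$ destroys the index correspondence with $\hat S$ that the whole argument rests on.

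The direct route you relegate to an ``alternatively'' is the right one and is what the paper is pointing at: for \emph{any} list $S$ and any line $\ell$, the quantity $\vf(S,\ell)$ is determined entirely by the sequence of signs recording which side of $\ell$ each $\vecz_{i_j}$ lies on; since the $\vecz_i$ are in convex position these signs on $\vecz_1,\dots,\vecz_m$ form at most two cyclic blocks, and convexity of $C_{\hat\sigma}$ lets one choose $\hat\ell$ realising the identical sign pattern on $\vecw_1,\dots,\vecw_m$. This handles every list at once with no case split. Your perturbation idea (nudging $\ell$ off the vertices to reduce to general position) is a perfectly legitimate way to make the sign-pattern transfer explicit, and is arguably more careful than the paper, which leaves those edge cases to the reader.
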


\begin{proof}
The proof is almost identical to that of Theorem~\ref{conv-curve}. Again it is clear that $\Phi$ is an algebra isomorphism. Given $f \in BV(\sigma)$, a list $S = [\vecx_0,\dots,\vecx_n]$ of points in $\sigma$, and a corresponding set of points ${\hat S} = [h(\vecx_0),\dots,h(\vecx_n)]$, then as in that proof, one can show that $\vf(S) = \vf({\hat S})$ and $\cvar(f,S) = \cvar(\Phi(f),{\hat S})$. This is enough to show that $\myvar(f,\sigma) = \myvar(\Phi(f),{\hat \sigma})$ and hence that $\norm{f}_{BV(\sigma)} = \norm{\Phi(f)}_{BV(\hat \sigma)}$.
\end{proof}

A consequence of this theorem is that for finite sets which comprise the vertices of a convex set, one may, for the purposes of calculating the variation, assume that the points lie on the unit circle.

\begin{thm}\label{conv-bounds-thm}
Suppose that $\sigma = \{\vecz_i\}_{i=1}^m$ consists of the vertices of a convex polygon, and that $f:\sigma \to \mC$. Then
  \begin{equation}\label{convex-bounds}
  \frac{1}{2} \Bigl(|f(\vecz_m) - f(\vecz_1)|
     + \sum_{i=1}^{m-1} |f(\vecz_i) - f(\vecz_{i+1})| \Bigr)
       \le \myvar(f,\sigma)
       \le \sum_{i=1}^{m-1} |f(\vecz_i) - f(\vecz_{i+1})|.
   \end{equation}
\end{thm}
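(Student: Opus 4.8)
The plan is to prove the two inequalities separately, using Theorem~\ref{finite-convex} to reduce to a convenient configuration. By that theorem, $\myvar(f,\sigma)$ depends only on the cyclic combinatorial structure of the vertices, so without loss of generality we may assume $\sigma = \{\vecz_1,\dots,\vecz_m\}$ consists of $m$ points in convex position, listed anticlockwise (one could even take them on the unit circle, but this is not needed).

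For the upper bound, the natural candidate list that should be close to optimal is $S_0 = [\vecz_1,\vecz_2,\dots,\vecz_m]$, the vertices in their natural cyclic order, which has $\cvar(f,S_0) = \sum_{i=1}^{m-1}|f(\vecz_i)-f(\vecz_{i+1})|$ and $\vf(S_0) = 1$ (a line meets the boundary of a convex polygon in at most two points, and tracing the vertices in convex order the curve $\gamma_{S_0}$ crosses any line at most once since it is a ``convex arc''). So the content of the upper bound is that \emph{every} list $S$ satisfies $\cvar(f,S) \le \vf(S)\sum_{i=1}^{m-1}|f(\vecz_i)-f(\vecz_{i+1})|$. First I would reduce, using Proposition~\ref{vf-lemma} and the triangle-inequality observation used in Lemma~\ref{char-fns}, to the case where $S$ consists of vertices with no two consecutive equal. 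Then the key geometric fact is: for a list $S$ of points in convex position, $\cvar(f,S) \le \vf(S) \cdot \Sigma$ where $\Sigma = \sum_{i=1}^{m-1}|f(\vecz_i)-f(\vecz_{i+1})|$ is the ``total variation around the open polygon''. To see this, think of each step $\vecx_{k-1}\to\vecx_k$ of the list as a jump between two vertices; bounding $|f(\vecx_k)-f(\vecx_{k-1})|$ by the sum of the increments $|f(\vecz_i)-f(\vecz_{i+1})|$ along one of the two arcs of the polygon boundary connecting them, one wants to choose arcs so that each ``edge'' $|f(\vecz_i)-f(\vecz_{i+1})|$ of the \emph{open} chain is used at most $\vf(S)$ times in total. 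Equivalently: erect the perpendicular bisector-type line $\ell_i$ separating vertex side from vertex side across the edge $\ls[\vecz_i,\vecz_{i+1}]$; each time a step of the list ``uses'' that edge it must cross $\ell_i$, and $\vf(S,\ell_i) \le \vf(S)$ bounds the number of crossings, hence the multiplicity of that edge. The edge $\ls[\vecz_m,\vecz_1]$ (the ``missing'' edge of the open chain) is the one we never have to use because for any two vertices the two boundary arcs between them, one avoids $\ls[\vecz_m,\vecz_1]$ — this is exactly where convexity and the open-chain structure enter.

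For the lower bound, I would exhibit a single list whose ratio $\cvar/\vf$ achieves (or exceeds) the left-hand side. The natural choice is $S_1 = [\vecz_1,\vecz_2,\dots,\vecz_m,\vecz_1]$, the full cyclic tour returning to the start. Then $\cvar(f,S_1) = |f(\vecz_m)-f(\vecz_1)| + \sum_{i=1}^{m-1}|f(\vecz_i)-f(\vecz_{i+1})|$, which is twice the left-hand side of \eqref{convex-bounds}. It remains to check $\vf(S_1) \le 2$: since $\gamma_{S_1}$ traces the boundary of the convex polygon $C_\sigma$ once (a simple closed convex curve), any line in the plane crosses it at most twice, so $\vf(S_1,\ell)\le 2$ for every $\ell$ — with a small care for lines through a vertex or containing an edge, which is handled just as in Proposition~\ref{vf-component}/the $\mathbb{T}$ arguments above. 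Dividing gives $\myvar(f,\sigma) \ge \cvar(f,S_1)/2$, which is the desired lower bound.

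The main obstacle is the upper bound, specifically making precise the ``each edge of the open chain is used at most $\vf(S)$ times'' bookkeeping. The clean way is: for each $i \in \{1,\dots,m-1\}$ choose a line $\ell_i$ that strictly separates $\{\vecz_1,\dots,\vecz_i\}$ from $\{\vecz_{i+1},\dots,\vecz_m\}$ — such a line exists by convexity and the anticlockwise ordering, since these two vertex-sets are the vertex sets of two ``consecutive arcs'' and hence are separated (their convex hulls meet only along the edge $\ls[\vecz_i,\vecz_{i+1}]$, and one can take $\ell_i$ through neither vertex by perturbing). For a step $\vecx_{k-1}=\vecz_a \to \vecx_k=\vecz_b$ of the list with $a\le i<b$ or $b\le i<a$, that step crosses $\ell_i$; summing the telescoped bound $|f(\vecz_a)-f(\vecz_b)| \le \sum |f(\vecz_j)-f(\vecz_{j+1})|$ over the arc from $a$ to $b$ \emph{not} through the edge $m\!\to\!1$, the number of steps contributing the term $|f(\vecz_i)-f(\vecz_{i+1})|$ is at most the number of steps of $S$ crossing $\ell_i$, which is $\vf(S,\ell_i) \le \vf(S)$. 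Hence $\cvar(f,S) \le \sum_{i=1}^{m-1}(\#\{\text{steps crossing }\ell_i\})|f(\vecz_i)-f(\vecz_{i+1})| \le \vf(S)\,\Sigma$, and taking the supremum over $S$ finishes it. A couple of edge-case annoyances (the curve passing through a vertex, choosing $\ell_i$ to miss all vertices of $S$, the single-point list) are routine and handled exactly as in the earlier proofs in this section.
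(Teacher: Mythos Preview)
Your upper bound argument is correct and is genuinely different from the paper's. The paper moves the vertices (via Theorem~\ref{finite-convex}) onto the upper half of the unit circle, so that the $\vecz_j$ are ordered by their real parts, and then simply invokes Theorem~\ref{ext-1var} to bound $\myvar(f,\sigma)$ by the one-dimensional variation of the projected function on $\sigma_\mR$, which is exactly $\sum_{i=1}^{m-1}|f(\vecz_i)-f(\vecz_{i+1})|$. Your separating-line argument accomplishes the same thing intrinsically, without projecting: it is really the same counting, since once the points are on the upper semicircle your $\ell_i$ can be taken to be the vertical line between $\cos\theta_i$ and $\cos\theta_{i+1}$. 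Both approaches are short; the paper's buys brevity by reusing existing machinery, yours makes the combinatorics explicit. (Two small remarks: your motivational claim $\vf(S_0)=1$ is false for $m\ge 4$ --- for the unit square the open path $[\vecz_1,\vecz_2,\vecz_3,\vecz_4]$ is crossed twice by a vertical line --- but this plays no role in your actual proof. And your statement that the two convex hulls ``meet only along the edge $\ls[\vecz_i,\vecz_{i+1}]$'' is not quite right; they are in fact disjoint, which is what you need.)

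Your lower bound, however, has a real gap. For the closed cycle $S_1=[\vecz_1,\dots,\vecz_m,\vecz_1]$ the claim $\vf(S_1)\le 2$ is false. Take any line $\ell$ through $\vecz_1$ and the interior of the polygon (say through the midpoint of some opposite edge): then $\ls[\vecz_1,\vecz_2]$ is a crossing segment of type~(2) (since $\vecx_0=\vecz_1\in\ell$), the edge containing the other intersection point is type~(1), and $\ls[\vecz_m,\vecz_1]$ is type~(3) (since $\vecz_m\notin\ell$, $\vecz_1\in\ell$). That gives $\vf(S_1,\ell)=3$; equivalently, by Proposition~\ref{vf-component}, $L(S_1,\ell)=\{0,t^*,1\}$ has three components because the parametrisation visits $\vecz_1$ at both endpoints. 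So your single cycle only yields $\myvar(f,\sigma)\ge\frac{1}{3}\sum$, not $\frac{1}{2}\sum$. The fix is exactly the limiting argument the paper points to from Lemma~\ref{bv-T1}: take $S_N$ tracing the cycle $N$ times, so that $\cvar(f,S_N)=N\sum$ while $\vf(S_N)\le 2N+1$, and let $N\to\infty$. This is not a technicality you can wave away --- the ``extra'' crossing at the base point is unavoidable for any single closed tour, and only washes out in the limit.
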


\begin{proof}
The left hand inequality follows exactly as in the proof of Lemma~\ref{bv-T1}. For the right hand inequality, we can use Theorem~\ref{finite-convex} to move the points into the upper half of the unit circle so that $\vecz_j = e^{i \theta_j}$ with $0 \le \theta_1 < \theta_2 < \dots < \theta_m \le \pi$. Then $\sigma_\mR = \{\cos \theta_j\}_{j=1}^m$ and, if one sets $g(\theta_j) = f(\vecz_j)$, $j=1,\dots,m$, then by Theorem~\ref{ext-1var}, $\myvar(f,\sigma) \le \myvar(g,\sigma_\mR)$. But $\myvar(g,\sigma_\mR)$ is just the expression on the right-hand side of (\ref{convex-bounds}).
\end{proof}

In the case that $\sigma = \{\vecz_j\}_{j=1}^3$ comprises three non-collinear points we can say a little more.

\begin{thm}\label{var-triangles}
	Suppose $\sigma=\{\vecz_1,\vecz_2,\vecz_3\}$ is a set of three non-collinear points in $\mC$ and that $f: \sigma \to \mC$. Then
	\begin{align*}
		\myvar(f,\sigma) = \frac{1}{2}\sum_{i<j} |f(\vecz_i)-f(\vecz_j)|.
	\end{align*}
\end{thm}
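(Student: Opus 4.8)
The plan is to sandwich $\myvar(f,\sigma)$ between the two bounds given in Theorem~\ref{conv-bounds-thm}, and then observe that when $\sigma$ consists of exactly three points these two bounds coincide. Since the three points $\vecz_1,\vecz_2,\vecz_3$ are non-collinear, they are automatically the vertices of a (non-degenerate) convex polygon, namely a triangle, so Theorem~\ref{conv-bounds-thm} applies directly with $m=3$. The left-hand inequality of \eqref{convex-bounds} reads
  \[ \tfrac{1}{2}\bigl(|f(\vecz_3)-f(\vecz_1)| + |f(\vecz_1)-f(\vecz_2)| + |f(\vecz_2)-f(\vecz_3)|\bigr) \le \myvar(f,\sigma), \]
and the right-hand inequality reads
  \[ \myvar(f,\sigma) \le |f(\vecz_1)-f(\vecz_2)| + |f(\vecz_2)-f(\vecz_3)|. \]

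The main point to check is therefore that these two bounds are in fact equal, which amounts to showing that the upper bound does not depend on the cyclic ordering of the three vertices. Whichever vertex we designate as the ``middle'' one $\vecz_2$, the sum of the two corresponding edge-lengths equals half of the total perimeter $|f(\vecz_1)-f(\vecz_2)|+|f(\vecz_2)-f(\vecz_3)|+|f(\vecz_3)-f(\vecz_1)|$ only if... — but wait, that is precisely the content of the lower bound, not of the upper bound. So the cleanest route is: the lower bound is always $\tfrac12\sum_{i<j}|f(\vecz_i)-f(\vecz_j)|$, and to get the matching upper bound I need to apply the right-hand inequality of Theorem~\ref{conv-bounds-thm} once for each of the three choices of which vertex plays the role of $\vecz_1$ (equivalently, which edge is omitted). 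Each such application gives $\myvar(f,\sigma)\le$ (sum of two of the three quantities $|f(\vecz_i)-f(\vecz_j)|$). Adding the three resulting inequalities gives $3\myvar(f,\sigma)\le 2\sum_{i<j}|f(\vecz_i)-f(\vecz_j)|$, hence $\myvar(f,\sigma)\le \tfrac23\sum_{i<j}$, which is weaker than what I want; so averaging is not the right move.

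Instead I would argue as follows. The lower bound from Theorem~\ref{conv-bounds-thm} already gives $\myvar(f,\sigma)\ge\tfrac12\sum_{i<j}|f(\vecz_i)-f(\vecz_j)|$, regardless of labelling. For the reverse inequality, relabel the three points so that the quantity $|f(\vecz_i)-f(\vecz_j)|$ is \emph{largest} for the pair $\{i,j\}=\{1,3\}$; this is legitimate since the three points of a triangle can be presented in any cyclic order as consecutive vertices. With this labelling the right-hand side of \eqref{convex-bounds} is $|f(\vecz_1)-f(\vecz_2)|+|f(\vecz_2)-f(\vecz_3)|$, and since $|f(\vecz_1)-f(\vecz_3)|$ is the largest of the three, we have $|f(\vecz_1)-f(\vecz_2)|+|f(\vecz_2)-f(\vecz_3)| \le \tfrac12\sum_{i<j}|f(\vecz_i)-f(\vecz_j)| + \tfrac12\bigl(|f(\vecz_1)-f(\vecz_2)|+|f(\vecz_2)-f(\vecz_3)| - |f(\vecz_1)-f(\vecz_3)|\bigr)$, and one checks the parenthesised term is at most... hmm, this still needs the triangle inequality $|f(\vecz_1)-f(\vecz_3)|\le|f(\vecz_1)-f(\vecz_2)|+|f(\vecz_2)-f(\vecz_3)|$, which goes the wrong way.

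Let me restart the reverse direction more carefully, since this is the genuine obstacle. Write $a=|f(\vecz_2)-f(\vecz_3)|$, $b=|f(\vecz_1)-f(\vecz_3)|$, $c=|f(\vecz_1)-f(\vecz_2)|$, so the target is $\myvar(f,\sigma)=\tfrac12(a+b+c)$. The lower bound of Theorem~\ref{conv-bounds-thm} gives $\myvar(f,\sigma)\ge\tfrac12(a+b+c)$ directly. For the upper bound I want $\myvar(f,\sigma)\le\tfrac12(a+b+c)$, and Theorem~\ref{conv-bounds-thm}, applied with the labelling that omits the edge of ``$f$-length'' $b$, only gives $\myvar(f,\sigma)\le a+c$, which exceeds $\tfrac12(a+b+c)$ precisely when $b<a+c$ — always true. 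So Theorem~\ref{conv-bounds-thm} alone is \emph{not} strong enough for the upper bound, and I expect the real work to be a direct estimate: take an arbitrary list $S=[\vecx_0,\dots,\vecx_n]$ of points among $\{\vecz_1,\vecz_2,\vecz_3\}$, reduce (using Proposition~\ref{vf-lemma}, reversal invariance Lemma~\ref{vf-order}, and removal of immediate repetitions) to the case where $S$ alternates among the three values, and then carefully bound $\cvar(f,S)/\vf(S)$. The key combinatorial fact will be that for such an alternating list, a line $\ell$ through one vertex of the triangle separating it from the other two realizes $\vf(S,\ell)$ roughly equal to the number of ``excursions'' in $S$, and bookkeeping the contributions of the three edge-values $a,b,c$ against this count yields the factor $\tfrac12$. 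This edge-counting argument — showing $\cvar(f,S)\le\tfrac12(a+b+c)\,\vf(S)$ for every alternating list $S$ — is where the bulk of the proof lies, and it will likely proceed by letting $p,q,r$ denote the number of times the transitions with $f$-values $a,b,c$ respectively occur in $S$, noting $\cvar(f,S)=pa+qb+rc$ wait this isn't right either since a single transition in $S$ between two vertices contributes the $f$-difference of those vertices; so $\cvar(f,S) = (\text{number of } \vecz_2\!-\!\vecz_3 \text{ steps})\cdot a + \cdots$, and then one must show the worst line gives $\vf(S)\ge$ the appropriate combination. I would carry this out by choosing, among the three lines each isolating one vertex, the one that maximizes the crossing count, and checking in each of the finitely many patterns of consecutive-vertex transitions that $\vf(S)$ is at least twice the ratio $\cvar(f,S)/(a+b+c)$; the non-collinearity hypothesis is exactly what guarantees such separating lines exist. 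That case analysis is the main obstacle; everything else is a direct appeal to the already-established Theorem~\ref{conv-bounds-thm} for the lower bound.
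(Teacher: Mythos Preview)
Your lower bound via Theorem~\ref{conv-bounds-thm} is fine, and you are right that the upper bound in that theorem is genuinely too weak to finish: $a+c\ge\tfrac12(a+b+c)$ always, so no choice of labelling rescues it. You also land on the correct skeleton for the upper bound: for a list $S$ with no immediate repeats, let $c_{12},c_{13},c_{23}$ count the transitions of each type, so that $\cvar(f,S)=\sum c_{ij}\,|f(\vecz_i)-f(\vecz_j)|$, and bound $\vf(S)$ from below using a line that isolates one vertex from the other two. But you stop at ``that case analysis is the main obstacle'' without carrying it out, and in fact the case analysis you anticipate is not needed.

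The paper's proof supplies the two missing pieces cleanly. First, if $c_{jk}$ is the smallest of the three counts and $i$ is the remaining index, a line separating $\vecz_i$ from $\vecz_j,\vecz_k$ (this is where non-collinearity enters) is crossed by every transition involving $\vecz_i$, so $\vf(S)\ge c_{ij}+c_{ik}=\cmax+\cmed$, the sum of the two largest counts. Second, writing $\fmax\ge\fmed\ge\fmin$ for the three values $|f(\vecz_i)-f(\vecz_j)|$ in decreasing order, the rearrangement inequality gives $\cvar(f,S)\le\cmax\fmax+\cmed\fmed+\cmin\fmin$, while the ordinary triangle inequality in $\mC$ gives $\fmed+\fmin\ge\fmax$. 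Expanding the trivially nonnegative quantity $(\cmax-\cmed)(\fmed+\fmin-\fmax)+2\fmin(\cmed-\cmin)$ and rearranging yields exactly
\[
\frac{\cmax\fmax+\cmed\fmed+\cmin\fmin}{\cmax+\cmed}\le\frac{\fmax+\fmed+\fmin}{2},
\]
which is the required bound on $\cvar(f,S)/\vf(S)$. So the obstacle you flagged dissolves into one geometric observation and one two-line algebraic identity; your outline had the right ingredients but not the recipe.
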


\begin{proof} Theorem~\ref{conv-bounds-thm} for $m=3$ says that the expression given is a lower bound for $\myvar(f,\sigma)$, so it just remains to show that it is also an upper bound.

Let $S = [\vecx_0,\dots,\vecx_n]$ be a list of points in $\sigma$. (We may safely assume that the list does not contain the same point in consecutive positions in the list, and that $n \ge 1$.) Each term in $\cvar(f,S)$ is of the form $f_{ij} := |f(\vecz_i) - f(\vecz_j)|$ with $i < j$. Let $c_{ij}$ denote the number of times that $f_{ij}$ occurs in the sum for $\cvar(f,S)$. Let $\fmax,\fmed,\fmin$ be the $f_{ij}$'s arranged in non-increasing order, and let $\cmax$, $\cmed$ and $\cmin$ be the $c_{ij}$'s arranged in non-increasing order.

Using the rearrangement inequality, and noting that $\vf(S) \geq \cmax+\cmed \geq 1$, we have that
	\begin{align*}
		\frac{\cvar(f,S)}{\vf(S)} \leq \frac{\cmax\fmax + \cmed\fmed + \cmin\fmin}{\cmax + \cmed}.
	\end{align*}
The triangle inequality implies that $\fmed+\fmin \geq \fmax$ and hence that
  \[ 0  \leq (\cmax-\cmed) (\fmed +\fmin - \fmax)  + 2\fmin(\cmed-\cmin). \]
This can be rearranged to
  \[ 2(\cmax\fmax + \cmed\fmed + \cmin\fmin)
            \leq (\cmax+\cmed)(\fmax+\fmed+\fmin) \]
or equivalently,
  \[ \frac{\cmax\fmax + \cmed\fmed + \cmin\fmin}{\cmax + \cmed}
         \leq \frac{\fmax+\fmed+\fmin}{2}. \]
	Thus taking the supremum over all such $S$ will yield
  \begin{align*}
	\myvar(f,\sigma)
        \leq \frac{1}{2} \left( \fmax + \fmed + \fmin \right)
        = \frac{1}{2} \sum_{i < j} |f(\vecz_i)-f(\vecz_j)|.
	\end{align*}
\end{proof}

The requirement that the points in $\sigma$ are non-collinear in Theorem~\ref{var-triangles} is essential.

It is currently an open question as to whether $\myvar(f,\sigma)$ is always equal to the lower bound in (\ref{convex-bounds}) if $\sigma$ has more than 3 points. We note that this is the case if $f$ is real valued with $f(\vecz_1) \leq f(\vecz_2) \leq \dots \leq f(\vecz_m)$, since then the upper and lower bounds coincide.


\subsection{Lattice properties of $\bvreal$}

Here we establish some additional properties of $BV(\sigma)$, particularly that the space $BV_\mR(\sigma)$ of real-valued functions is a lattice under pointwise operations. We first note that $BV_\mR(\sigma)$ is closed under taking absolute values since, as stated in Lemma~\ref{lem:bvc:5540}, $\myvar(|f|,\sigma)\leq \myvar(f,\sigma)$.

\begin{lem}\label{absBV}
	If $f \in BV(\sigma)$, then $\norm{\,|f|\,}_{BV(\sigma)}  \leq \norm{f}_{BV(\sigma)}$ and so $|f| \in BV(\sigma)$.
\end{lem}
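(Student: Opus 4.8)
The plan is to reduce the claimed norm inequality $\norm{\,|f|\,}_{\BV(\sigma)} \le \norm{f}_{\BV(\sigma)}$ to its two constituent pieces: the supremum-norm part and the variation part. For the supremum part, observe that $\norminf{\,|f|\,} = \sup_{z\in\sigma} \bigl| |f(z)| \bigr| = \sup_{z\in\sigma} |f(z)| = \norminf{f}$, so that term is not merely bounded but equal. For the variation part, the work has already been done: part (5) of Lemma~\ref{lem:bvc:5540} asserts exactly that $\myvar(|f|,\sigma) \le \myvar(f,\sigma)$. Adding the two inequalities gives
\[
 \norm{\,|f|\,}_{\BV(\sigma)} = \norminf{\,|f|\,} + \myvar(|f|,\sigma) \le \norminf{f} + \myvar(f,\sigma) = \norm{f}_{\BV(\sigma)},
\]
and in particular $|f| \in \BV(\sigma)$ whenever $f \in \BV(\sigma)$.

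Since Lemma~\ref{lem:bvc:5540}(5) was only labelled "straightforward" and not proved in detail, it is probably worth including the one-line justification here for completeness. For any finite ordered list $S = \plist{\vecx_0,\dots,\vecx_n}$ of points of $\sigma$, the reverse triangle inequality gives $\bigl| |f(\vecx_i)| - |f(\vecx_{i-1})| \bigr| \le |f(\vecx_i) - f(\vecx_{i-1})|$ for each $i$, hence $\cvar(|f|,S) \le \cvar(f,S)$. Dividing by $\vf(S) \ge 1$ and taking the supremum over all lists $S$ yields $\myvar(|f|,\sigma) \le \myvar(f,\sigma)$.

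There is essentially no obstacle here: this lemma is a bookkeeping corollary of Lemma~\ref{lem:bvc:5540}(5), recording it at the level of the full $\BV$-norm so it can be cited in the subsequent discussion of lattice operations. The only thing to be slightly careful about is that $\norminf{\cdot}$ denotes the genuine supremum (as the paper emphasises), so the equality $\norminf{\,|f|\,} = \norminf{f}$ is immediate and requires no measure-theoretic caveat.
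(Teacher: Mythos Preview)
Your proof is correct and mirrors the paper's own treatment: the paper states this lemma without a separate proof, simply noting beforehand that $\myvar(|f|,\sigma)\le\myvar(f,\sigma)$ by Lemma~\ref{lem:bvc:5540}(5), which together with $\norminf{\,|f|\,}=\norminf{f}$ gives the result. Your inclusion of the reverse-triangle-inequality justification for part~(5) is a welcome addition, since the paper left that step as ``straightforward''.
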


Given $f,g:\sigma\to\mR$, let $f\vee g$ and $f\wedge g$ denote the pointwise maximum and minimum of $f$ and $g$ respectively.

\begin{prop}
	If $f,g\in BV_\mR(\sigma)$, then $f\vee g$ and $f\wedge g$ are in $BV(\sigma)$ with
	\begin{align*}
		\normbv{f\vee g},\normbv{f\wedge g}\leq \normbv{f}+\normbv{g}.
	\end{align*}
\end{prop}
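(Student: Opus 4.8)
The plan is to reduce everything to the pointwise identities
\[
 f \vee g = \tfrac{1}{2}\bigl(f+g+|f-g|\bigr), \qquad f \wedge g = \tfrac{1}{2}\bigl(f+g-|f-g|\bigr),
\]
valid for real-valued $f,g$. Since $f,g \in BV_\mR(\sigma)$, Lemma~\ref{lem:bvc2} shows $f-g \in BV(\sigma)$, and then Lemma~\ref{absBV} gives $|f-g| \in BV(\sigma)$ with $\||f-g|\|_{BV(\sigma)} \le \|f-g\|_{BV(\sigma)}$. In particular $f\vee g$ and $f \wedge g$, being $BV(\sigma)$-combinations of $f$, $g$ and $|f-g|$, lie in $BV(\sigma)$.

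For the norm bound, first I would record the elementary estimate $\|f-g\|_{BV(\sigma)} \le \|f\|_{BV(\sigma)} + \|g\|_{BV(\sigma)}$, obtained by combining parts (1) and (3) of Lemma~\ref{lem:bvc2}. Applying the triangle inequality (Lemma~\ref{lem:bvc2}(1)) and scalar homogeneity (Lemma~\ref{lem:bvc2}(3)) to the identity for $f\vee g$, together with Lemma~\ref{absBV}, yields
\[
 \|f\vee g\|_{BV(\sigma)} \le \tfrac12\bigl(\|f\|_{BV(\sigma)} + \|g\|_{BV(\sigma)} + \||f-g|\|_{BV(\sigma)}\bigr) \le \tfrac12\bigl(\|f\|_{BV(\sigma)}+\|g\|_{BV(\sigma)}+\|f-g\|_{BV(\sigma)}\bigr),
\]
and substituting the bound on $\|f-g\|_{BV(\sigma)}$ gives $\|f\vee g\|_{BV(\sigma)} \le \|f\|_{BV(\sigma)}+\|g\|_{BV(\sigma)}$. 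The identical computation with the sign of $|f-g|$ reversed handles $f\wedge g$.

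There is essentially no obstacle: the statement is a direct consequence of the already-established facts that $BV(\sigma)$ is a normed algebra and is closed under taking absolute values with norm non-increasing. The only thing worth remarking is that the claimed inequality is exactly what this naive triangle-inequality estimate produces, so no finer argument through lists and variation factors is needed; a sharper constant may well hold (compare the classical one-variable situation), but proving that is not required here.
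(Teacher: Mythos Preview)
Your proof is correct and follows essentially the same approach as the paper: both use the identities $f\vee g=\tfrac12(f+g+|f-g|)$ and $f\wedge g=\tfrac12(f+g-|f-g|)$, invoke Lemma~\ref{absBV} to control $|f-g|$, and then apply the triangle inequality to obtain the stated norm bound.
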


\begin{proof}
Suppose that $f,g\in BV(\sigma)$ are real valued. Noting that
\begin{align*}
f\vee g= \frac{1}{2}\big( f + g + |f-g| \big),
\end{align*}
and that $|f-g|\in BV(\sigma)$ by Lemma~\ref{absBV}, we find that $f\vee g\in BV(\sigma)$. Similarly, using $f\wedge g=\frac{1}{2}(f+g-|f-g|)$, we see that $f \wedge g\in BV(\sigma)$. The claimed bound follows by applying the triangle inequality to the identities for $f\vee g$ and $f\wedge g$.
\end{proof}

\begin{rem}
	If one is only interested in estimating the variation, then by Lemma~\ref{lem:bvc:5540} one has that
	\begin{align*}
		\myvar(f\vee g,\sigma),\myvar(f\wedge g,\sigma) \leq\myvar(f,\sigma)+\myvar(g,\sigma).
	\end{align*}
\end{rem}

\begin{rem}
	Though a lattice, $BV_\mR(\sigma)$ is not a Banach lattice under pointwise operations. Recall that for a lattice to be a Banach lattice, the norm must respect the lattice ordering. That is,  $\norm{|x|}\leq\norm{|y|}$ whenever $|x|\leq|y|$ (where $|x| = x\vee (-x)$). In $BV[0,1]$, consider the constant $f=3$ and the function $g=2-\chi_{\{0.5\}}$. Then $f> g\geq 1$ but $\norm{f} = 3 < \norm{g} = 4$.
\end{rem}

\section{Subalgebras}\label{subalgebras}

Before progressing further we shall demonstrate that $BV(\sigma)$ always contains a suitably large collections of useful functions. In particular, $BV(\sigma)$ always contains the polynomials in two real variables.

\subsection{Absolutely continuous functions}

In this section we shall consider $\sigma$ to be a subset of $\mR^2$.
Define $p_x,p_y: \sigma \to \mC$ by $p_x(x,y) = x$ and $p_y(x,y) = y$. By Theorem~\ref{ext-1var}, $p_x,p_y \in BV(\sigma)$.
 Let $\Pol$ denote the algebra of complex-valued polynomials in two variables,  $p(x,y) = \sum_{m,n=0}^N c_{mn}\, x^m y^n$. Since $BV(\sigma)$ is an algebra, the following is immediate.

\begin{lem}\label{p1-in-bv}
$\Pol$ is a subalgebra of $BV(\sigma)$.
\end{lem}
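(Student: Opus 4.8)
The plan is to exhibit $\Pol$ as the algebra generated (over $\mC$) by the two coordinate functions $p_x$ and $p_y$, and then invoke the fact, already established, that $BV(\sigma)$ is a commutative algebra containing the constants, $p_x$, and $p_y$. First I would note that $p_x, p_y \in BV(\sigma)$ is exactly the content of the sentence immediately preceding the lemma (an application of Theorem~\ref{ext-1var}, since each coordinate function is constant on lines parallel to one of the axes). The constant functions lie in $BV(\sigma)$ trivially, since $\myvar(c,\sigma) = 0$ and $\norm{c}_\infty = |c| < \infty$.

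Next I would observe that a typical element $p(x,y) = \sum_{m,n=0}^N c_{mn}\,x^m y^n$ of $\Pol$ is, when regarded as a function on $\sigma$, precisely $\sum_{m,n=0}^N c_{mn}\, p_x^m p_y^n$, a finite $\mC$-linear combination of products of the functions $p_x$ and $p_y$. Since Lemma~\ref{lem:bvc2} shows that $BV(\sigma)$ is closed under addition, scalar multiplication, and pointwise multiplication, it follows by an immediate induction on the number of factors (or simply by noting that a subalgebra of an algebra is closed under finite sums of finite products of its elements) that each such $p$ lies in $BV(\sigma)$. Closure under the algebra operations also shows that the set of these polynomial functions is itself closed under the operations, so it is a subalgebra. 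Hence $\Pol \subseteq BV(\sigma)$ and $\Pol$ is a subalgebra of $BV(\sigma)$, as claimed.

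There is no real obstacle here: the lemma is a direct bookkeeping consequence of the two facts that $BV(\sigma)$ is an algebra and that the coordinate functions and constants belong to it. The only mild subtlety worth a remark is that, strictly speaking, elements of $\Pol$ are formal polynomials while elements of $BV(\sigma)$ are functions on $\sigma$; the statement is to be understood via the natural evaluation homomorphism $p \mapsto p|_\sigma$, and "$\Pol$ is a subalgebra of $BV(\sigma)$" means that the image of this homomorphism is a subalgebra. (When $\sigma$ is finite this map need not be injective, but that does not affect the statement.) I would therefore keep the proof to one or two sentences, citing Theorem~\ref{ext-1var} for $p_x, p_y \in BV(\sigma)$ and Lemma~\ref{lem:bvc2} for closure under the algebra operations.
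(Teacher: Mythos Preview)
Your proposal is correct and matches the paper's approach exactly: the paper simply states that the result is immediate from $p_x,p_y\in BV(\sigma)$ (via Theorem~\ref{ext-1var}) together with the fact that $BV(\sigma)$ is an algebra. Your additional remark on the evaluation homomorphism and the possible non-injectivity for small $\sigma$ is a nice clarification that the paper itself makes informally just after the lemma.
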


The space $\Pol$ should more correctly be denoted as $\Pol(\sigma)$, and the reader is cautioned that the same function may have distinct representations as a polynomial if the set $\sigma$ is small.
Here and throughout the paper we shall usually use the shorter form of the notation if there is no risk of confusion.

\begin{defn}
The space of \textbf{absolutely continuous functions} of $\sigma$, denoted $\AC(\sigma)$, is defined to be the closure of $\Pol$ in $(\BV(\sigma),\|\cdot\|_{BV(\sigma)}) $.
\end{defn}

Since $BV$ convergence implies uniform convergence, it is
clear that all absolutely continuous functions are in fact
continuous. From Proposition~\ref{bv-notc}, we know that when $\sigma$ is infinite and compact, $AC(\sigma)$ is a proper Banach subalgebra of $BV(\sigma)$.

In the case that $\sigma = [a,b] \subseteq \mR$, this definition agrees with the more classical one, and in that case one can readily find continuous functions of bounded variation (such as the Cantor function) which are not absolutely continuous. Moreover, it follows from Corollary~\ref{bg-bv} that $AC(\mT)$ is isomorphic to $AC_{BG}(\mT)$, the Banach algebra of absolutely continuous functions as defined by Berkson and Gillespie in \cite{BG2}.

The following simple but useful lemma is worth recording.

\begin{lem}\label{re-im}
Suppose that $\sigma$ is a nonempty compact subset of $\mR^2$. Then $f\in AC(\sigma)$ if and only if $\text{Re}(f),\text{Im}(f)\in AC(\sigma)$.
\end{lem}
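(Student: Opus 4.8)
The plan is to prove the two implications separately, using the fact that $AC(\sigma)$ is by definition the closure of $\Pol$ in the $BV(\sigma)$-norm, together with the decomposition of $BV(\sigma)$ into real and imaginary parts recorded just after Lemma~\ref{lem:bvc2}.

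For the forward implication, suppose $f \in AC(\sigma)$, so there is a sequence of polynomials $p_k \in \Pol$ with $\normbv{p_k - f} \to 0$. Write $p_k = q_k + i r_k$ where $q_k = \Re(p_k)$ and $r_k = \Im(p_k)$; the key observation is that $q_k$ and $r_k$ are themselves (real-coefficient) polynomials in $x,y$, since the real and imaginary parts of $\sum c_{mn} x^m y^n$ are obtained simply by splitting the coefficients $c_{mn}$, and $x,y$ are real on $\sigma \subseteq \mR^2$. Hence $q_k, r_k \in \Pol$. Since for any $h \in BV(\sigma)$ one has $\norm{\Re h}_\infty \le \norm{h}_\infty$ and, by Lemma~\ref{lem:bvc:5540}(1),(3) applied to $\Re h = \tfrac12(h + \bar h)$ (noting $\myvar(\bar h,\sigma) = \myvar(h,\sigma)$), also $\myvar(\Re h, \sigma) \le \myvar(h,\sigma)$, we get $\norm{\Re h}_{BV(\sigma)} \le \norm{h}_{BV(\sigma)}$, and similarly for $\Im$. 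Applying this to $h = p_k - f$ gives $\normbv{q_k - \Re f} \le \normbv{p_k - f} \to 0$ and likewise $\normbv{r_k - \Im f} \to 0$, so $\Re f, \Im f \in AC(\sigma)$.

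For the reverse implication, suppose $\Re f, \Im f \in AC(\sigma)$. Then there are real polynomials $q_k, r_k \in \Pol$ with $\normbv{q_k - \Re f} \to 0$ and $\normbv{r_k - \Im f} \to 0$. Set $p_k = q_k + i r_k \in \Pol$; by Lemma~\ref{lem:bvc2}(1),(3) (the triangle inequality for the $BV$-norm),
\[
  \normbv{p_k - f} \le \normbv{q_k - \Re f} + \normbv{r_k - \Im f} \to 0,
\]
so $f \in AC(\sigma)$.

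There is really no serious obstacle here; the only point requiring a moment's care is the remark that $\myvar(\Re h,\sigma) \le \myvar(h,\sigma)$, which one could alternatively cite as an instance of Lemma~\ref{lem:bvc:5540}(5) after observing $\Re h$ can be written through $|\cdot|$-type estimates, but the cleanest route is the averaging identity $\Re h = \tfrac12(h+\bar h)$ combined with the obvious equality $\myvar(\bar h,\sigma) = \myvar(h,\sigma)$ (each term $|\bar h(\vecx_i) - \bar h(\vecx_{i-1})| = |h(\vecx_i)-h(\vecx_{i-1})|$). Everything else is a direct unwinding of the definition of $AC(\sigma)$ as a closure, so the lemma follows.
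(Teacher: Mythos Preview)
Your proof is correct and follows essentially the same approach as the paper's. The paper's argument is simply the one-line observation that $AC(\sigma)$ is a vector space closed under complex conjugation (so $\Re f = \tfrac12(f+\bar f)$ and $\Im f = \tfrac1{2i}(f-\bar f)$ lie in $AC(\sigma)$ whenever $f$ does, and conversely $f = \Re f + i\,\Im f$); you have unwound exactly this by working at the level of approximating polynomial sequences and using $\myvar(\bar h,\sigma)=\myvar(h,\sigma)$, which is precisely why conjugation is an isometry of $BV(\sigma)$ preserving $\Pol$.
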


\begin{proof}
This simply follows from the fact that $AC(\sigma)$ is a vector space that is closed under complex conjugation.
\end{proof}

\begin{thm}\label{ac-restrict}
Suppose that $\sigma_1,\sigma$ are nonempty compact subsets of the plane with $\sigma_1 \subseteq \sigma$. If $f \in AC(\sigma)$ then $f|\sigma_1 \in AC(\sigma_1)$, with $\norm{f|\sigma_1}_{BV(\sigma_1)} \le \norm{f}_{BV(\sigma)}$.
\end{thm}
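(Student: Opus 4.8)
The plan is to reduce the absolute-continuity statement to the corresponding statement for $BV(\sigma)$, which is already established as Theorem~\ref{bv-restrict}. The norm estimate $\norm{f|\sigma_1}_{BV(\sigma_1)} \le \norm{f}_{BV(\sigma)}$ is literally the content of Theorem~\ref{bv-restrict}, so the only genuinely new thing to prove is that the restriction of an absolutely continuous function is absolutely continuous, i.e. that the restriction map sends the closure of $\Pol(\sigma)$ into the closure of $\Pol(\sigma_1)$.

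First I would set up the restriction map $R: BV(\sigma) \to BV(\sigma_1)$ by $R(f) = f|\sigma_1$. By Theorem~\ref{bv-restrict}, $R$ is well-defined and is a norm-decreasing (hence bounded) linear map; it is clearly an algebra homomorphism. Next I would observe the key algebraic fact: if $p \in \Pol(\sigma)$ is represented by a polynomial expression $p(x,y) = \sum c_{mn} x^m y^n$, then its restriction to $\sigma_1$ is represented by the \emph{same} polynomial expression, so $R(p) \in \Pol(\sigma_1)$. Thus $R(\Pol(\sigma)) \subseteq \Pol(\sigma_1)$.

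Now the topological step: given $f \in AC(\sigma)$, choose a sequence $(p_k)$ in $\Pol(\sigma)$ with $\norm{p_k - f}_{BV(\sigma)} \to 0$. Since $R$ is norm-decreasing, $\norm{R(p_k) - R(f)}_{BV(\sigma_1)} \le \norm{p_k - f}_{BV(\sigma)} \to 0$. Each $R(p_k)$ lies in $\Pol(\sigma_1)$, so $R(f) = f|\sigma_1$ is a $BV(\sigma_1)$-limit of polynomials, hence lies in $AC(\sigma_1)$ by definition. Combining this with the norm bound from Theorem~\ref{bv-restrict} completes the proof.

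I do not anticipate a real obstacle here: the argument is the standard ``a bounded homomorphism maps a generating set into a generating set, hence maps the closed subalgebra it generates into the other'' pattern. The only point requiring a moment's care is the remark (already flagged in the paper after Lemma~\ref{p1-in-bv}) that a polynomial \emph{function} on $\sigma$ may have several polynomial representations when $\sigma$ is small; but this causes no difficulty, since for \emph{any} representation of $p$ the same formula restricts to a valid representation of $p|\sigma_1$, so $R(p) \in \Pol(\sigma_1)$ regardless. A short explicit proof along these lines suffices.
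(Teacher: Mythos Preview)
Your proposal is correct and is essentially identical to the paper's proof: take a sequence of polynomials $p_n \to f$ in $BV(\sigma)$, apply Theorem~\ref{bv-restrict} to get $p_n|\sigma_1 \to f|\sigma_1$ in $BV(\sigma_1)$, and conclude $f|\sigma_1 \in AC(\sigma_1)$ with the stated norm bound. The paper's writeup is slightly terser (it does not explicitly name the restriction map $R$), but the argument is the same.
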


\begin{proof}
If $f\in AC(\sigma)$, then there is a sequence of polynomials $\{p_n\}_{n=1}^\infty$ such that $p_n \to f$ as $n\to\infty$ in $BV(\sigma)$. By Theorem~\ref{bv-restrict}, $\{p_n|\sigma_1\}_{n=1}^\infty$ defines a sequence of polynomials in $AC(\sigma_1)$ such that $p_n|\sigma_1 \to f|\sigma_1$ in $BV(\sigma_1)$ and
\begin{align*}
\|f|\sigma_1\|_{BV(\sigma_1)}
  = \lim_{n\to\infty}\|p_n|\sigma_1\|_{BV(\sigma_1)}
  \leq \lim_{n\to\infty}\|p_n\|_{BV(\sigma)}
  = \|f\|_{BV(\sigma)}.
\end{align*}
\end{proof}

Let $\phi$ be an invertible affine map of the plane and let ${\hat \sigma} = \phi(\sigma)$. It is an easy exercise to check that $p \in \Pol(\sigma)$ if and only if ${\hat p} = p \circ \phi^{-1} \in \Pol({\hat \sigma})$. (We note that this is true whether one is considering real or complex affine maps.)  This immediately implies the following analogues of Theorems~\ref{aff-inv} and~\ref{ext-1var}.

\begin{thm}\label{ac-affine}
Let $\sigma$ be a nonempty compact subset of $\mR^2$ and let $\phi$ be an invertible affine map of the plane. Then $AC(\phi(\sigma))$ is isometrically isomorphic to $AC(\sigma)$.
\end{thm}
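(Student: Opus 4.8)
The statement to prove is Theorem~\ref{ac-affine}: for $\sigma$ a nonempty compact subset of $\mR^2$ and $\phi$ an invertible affine map of the plane, $AC(\phi(\sigma))$ is isometrically isomorphic to $AC(\sigma)$.

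\medskip

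The plan is to exploit Theorem~\ref{aff-inv}, which already gives an isometric Banach algebra isomorphism $\Psi: BV(\sigma) \to BV(\hat\sigma)$, $\Psi(f) = f \circ \phi^{-1}$, where $\hat\sigma = \phi(\sigma)$. (Strictly, Theorem~\ref{aff-inv} is stated for complex affine maps $z \mapsto \alpha z + \beta$; since any invertible real affine map of $\mR^2$ still preserves lines and maps the family of finite lists in $\sigma$ bijectively onto the family of finite lists in $\hat\sigma$ while preserving variation factors and curve variations, the same proof applies verbatim, and I would note this one-line remark rather than reprove it.) The key observation, flagged in the sentence immediately preceding the theorem in the excerpt, is that $p \in \Pol(\sigma)$ if and only if $\hat p = p \circ \phi^{-1} \in \Pol(\hat\sigma)$: this is the elementary fact that composing a polynomial in $x,y$ with an invertible affine substitution yields another polynomial in $x,y$, and that $\phi^{-1}$ is itself affine. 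Hence $\Psi$ maps $\Pol(\sigma)$ bijectively onto $\Pol(\hat\sigma)$.

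\medskip

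The core of the argument is then that an isometric isomorphism carries the closure of a subset onto the closure of its image. Concretely: $AC(\sigma)$ is by definition the $BV(\sigma)$-closure of $\Pol(\sigma)$, and $AC(\hat\sigma)$ is the $BV(\hat\sigma)$-closure of $\Pol(\hat\sigma)$. Since $\Psi$ is an isometric bijection it is in particular a homeomorphism, so $\Psi\bigl(\overline{\Pol(\sigma)}\bigr) = \overline{\Psi(\Pol(\sigma))} = \overline{\Pol(\hat\sigma)}$, i.e. $\Psi(AC(\sigma)) = AC(\hat\sigma)$. Therefore the restriction $\Psi|_{AC(\sigma)}$ is a well-defined bijection onto $AC(\hat\sigma)$; it inherits from $\Psi$ the properties of being linear, multiplicative, and norm-preserving, and it is surjective by the closure computation just made. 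That is exactly the assertion that $AC(\sigma)$ and $AC(\hat\sigma)$ are isometrically isomorphic as Banach algebras.

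\medskip

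There is essentially no hard step here --- the theorem is a formal consequence of Theorem~\ref{aff-inv} together with the stability of $\Pol$ under affine substitution. The only point requiring a modicum of care, and the one I would state explicitly, is the transfer of Theorem~\ref{aff-inv} from the complex-affine setting in which it is phrased to the real-affine setting demanded here; once that is granted, the proof is the two-line closure argument above. I would also remark in passing (as the excerpt does) that the analogue of Theorem~\ref{ext-1var} --- absolute continuity of functions constant on a family of parallel lines --- follows the same way, though that is not part of the statement to be proved.
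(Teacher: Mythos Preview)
Your proposal is correct and matches the paper's approach exactly: the paper simply remarks that $p \in \Pol(\sigma)$ iff $p\circ\phi^{-1} \in \Pol(\hat\sigma)$ (for real or complex affine $\phi$) and states that Theorem~\ref{ac-affine} then follows immediately from Theorem~\ref{aff-inv}, without writing out the closure argument that you have supplied.
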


%
%

%

\begin{thm}\label{ac-ext1}
 Suppose that $\sigma$ is a nonempty compact subset of the plane with projection $\sigma_\mR$ onto the real axis. If $u \in AC(\sigma_\mR)$ and ${\hat u}$ is the extension of $u$ to $\sigma$ given by ${\hat u}(x,y) = u(x)$, then ${\hat u} \in AC(\sigma)$.
\end{thm}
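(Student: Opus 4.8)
The plan is to reduce the statement to its one-variable analogue, namely the fact that $p_x \in AC(\sigma)$ follows from $AC(\sigma_\mR)$ being generated by $p$, together with the norm-estimate in Theorem~\ref{ext-1var}. Concretely, since $u \in AC(\sigma_\mR)$, there is a sequence of one-variable polynomials $q_n(x) \to u$ in $BV(\sigma_\mR)$. First I would form the two-variable polynomials $p_n(x,y) = q_n(x)$, which certainly lie in $\Pol(\sigma) \subseteq BV(\sigma)$; note $\widehat{q_n} = p_n$ in the notation of Theorem~\ref{ac-ext1}, where the "extension" of the one-variable function $q_n$ to $\sigma$ is literally $p_n$.

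The key step is then to apply Theorem~\ref{ext-1var} to the difference $u - q_n$: it gives
\begin{align*}
\normbv{\widehat{u} - p_n} = \normbv{\widehat{u - q_n}} \leq \norm{u - q_n}_{BV(\sigma_\mR)} \to 0
\end{align*}
as $n \to \infty$. Hence $\widehat{u}$ is a $BV(\sigma)$-limit of polynomials in $\Pol(\sigma)$, so $\widehat{u} \in AC(\sigma)$ by definition. One small point to check is that $u - q_n$ genuinely lies in $BV(\sigma_\mR)$ so that Theorem~\ref{ext-1var} applies — this is immediate since $u \in AC(\sigma_\mR) \subseteq BV(\sigma_\mR)$ and $q_n$ is a polynomial, hence in $BV(\sigma_\mR)$ by the remarks preceding Lemma~\ref{p1-in-bv} (or by Theorem~\ref{ext-1var} applied to $p_x$ and products/sums).

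Honestly, I do not expect any real obstacle here: the whole argument is a three-line consequence of Theorem~\ref{ext-1var} once one observes that the linear extension operator $u \mapsto \widehat{u}$ is norm-nonincreasing from $BV(\sigma_\mR)$ to $BV(\sigma)$ and sends one-variable polynomials into $\Pol(\sigma)$, so it sends the closure $AC(\sigma_\mR)$ into the closure $AC(\sigma)$. The only thing worth stating carefully is that $\widehat{q_n}$ is a genuine element of $\Pol(\sigma)$ and not merely of $BV(\sigma)$, which is clear since $q_n(x)$ is already a polynomial expression in the two variables $x,y$ (with no dependence on $y$).
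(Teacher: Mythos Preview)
Your proposal is correct and follows essentially the same approach as the paper's proof: take a sequence of one-variable polynomials $q_n \to u$ in $BV(\sigma_\mR)$, note that their extensions $\widehat{q_n}$ lie in $\Pol(\sigma)$, and apply Theorem~\ref{ext-1var} to $u - q_n$ to conclude $\widehat{q_n} \to \widehat{u}$ in $BV(\sigma)$. The paper's version is just a more compressed two-sentence statement of exactly this argument.
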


\begin{proof}
Clearly if $p: \sigma_\mR \to \mC$ is a polynomial of one real variable, then its extension ${\hat p}$ lies in $\Pol$. By Theorem~\ref{ext-1var}, if $p_n \to u$ in $BV(\sigma_\mR)$ then ${\hat p}_n \to {\hat u}$ in $BV(\sigma)$.
\end{proof}

Again, one might replace the real axis here with any other line in the plane.

\noindent Some natural questions that arise are:
\begin{enumerate}
\item If $\sigma_1\subset\sigma$ and $f\in AC(\sigma_1)$, does there exist $\hat{f}\in AC(\sigma)$ such that $\hat{f}|\sigma_1=f$?
\item If $\sigma$ can be decomposed as $\sigma=\sigma_1\cup\sigma_2$ and $f:\sigma\to\mC$ such that $f|\sigma_1\in AC(\sigma_1)$ and $f|\sigma_2\in AC(\sigma_2)$, then $f\in AC(\sigma)$?
\end{enumerate}
The first question is still open in general, but we answer the second in the negative.
\begin{ex}
Let $\sigma_1=\{0\}\cup\{\frac{1}{2k-1}\}_{k=1}^\infty$, $\sigma_2=\{0\}\cup\{\frac{1}{2k}\}_{k=1}^\infty$, and $\sigma=\sigma_1\cup\sigma_2$. Define $f:\sigma\to\mC$ by
\begin{align*}
    f(x)=\begin{cases}
       0	&        \text{ if  $x=0$},\\
      (-1)^k x &	\text{ if $x=\frac{1}{k}$}.
     \end{cases}
\end{align*}

First we observe that since $f|\sigma_1(x)=-x$ and $f|\sigma_2(x)=x$, we have that $f|\sigma_1$ and $f|\sigma_2$ are polynomials and hence absolutely continuous. On the other hand, $f$ is not of bounded variation since
\begin{align*}
\myvar(f,\sigma)&	=\sum_{k=1}^\infty\left|f(\tfrac{1}{k})-f(\tfrac{1}{k+1}) \right| \\
&	=	\sum_{k=1}^\infty\left|\frac{(-1)^k}{k}-\frac{(-1)^{k+1}}{k+1} \right| \\
&	=\sum_{k=1}^\infty \frac{1}{k}+\frac{1}{k+1},
\end{align*}
which is divergent.
\end{ex}
However, if one imposes some conditions on $\sigma_1$ and $\sigma_2$, then the second question can be answered positively. We shall say that two nonempty compact sets $\sigma_1$ and $\sigma_2$ \textbf{join convexly} if for all $\vecx \in \sigma_1\setminus \sigma_2$ and $\vecy \in \sigma_2 \setminus \sigma_1$ there exists $\vecw$ on the line joining $\vecx$ and $\vecy$ with $\vecw \in \sigma_1 \cap \sigma_2$. Clearly if $\sigma_1 \cup \sigma_2$ is convex then the two sets join convexly. The following result was stated in \cite{AD3} for convex sets, although the proof only uses the  property of joining convexly. Below we present the slightly more general version along with a proof using the updated definitions of variation in the plane.

\begin{thm}\label{variation-join}
 Suppose that $\sigma_1, \sigma_2 \subseteq \mR^2$ are
nonempty compact sets which are disjoint except at their boundaries and that $\sigma_1$ and $\sigma_2$ join convexly. Let  $\sigma = \sigma_1 \cup \sigma_2$. If $f:\sigma \to \mC$,
then
  \[ \max\{\myvar(f,\sigma_1),\myvar(f,\sigma_2)\}
     \le \myvar(f,\sigma)
     \le \myvar(f,\sigma_1) + \myvar(f,\sigma_2) \]
and hence
  \[ \max\{\norm{f}_{\BV(\sigma_1)},\norm{f}_{\BV(\sigma_2)} \}
  \le  \normbv{f}
  \le \norm{f}_{\BV(\sigma_1)} + \norm{f}_{\BV(\sigma_2)}. \]
Thus, if $f|\sigma_1 \in \BV(\sigma_1)$ and $f|\sigma_2 \in
\BV(\sigma_2)$, then $f \in \BV(\sigma)$.

\end{thm}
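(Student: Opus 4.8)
The plan is to establish the two variation estimates; the norm estimates and the final assertion about $\BV(\sigma)$ then follow at once, using $\norminf{f} = \max\{\norminf{f|\sigma_1},\norminf{f|\sigma_2}\}$ (as $\sigma = \sigma_1\cup\sigma_2$) together with $\max\{a,b\}\le a+b$ for nonnegative $a,b$. Note that only the convex joining hypothesis will be used; the condition that $\sigma_1,\sigma_2$ meet only at their boundaries plays no role in the argument.

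The lower bound $\max\{\myvar(f,\sigma_1),\myvar(f,\sigma_2)\} \le \myvar(f,\sigma)$ is immediate from Theorem~\ref{bv-restrict} (or directly: every finite list of points of $\sigma_i$ is a finite list of points of $\sigma$, so the supremum defining $\myvar(f,\sigma)$ runs over a larger collection). For the upper bound write $V_i = \myvar(f,\sigma_i)$, which we may assume finite, fix a finite list $S = \plist{\vecx_0,\dots,\vecx_n}$ of points of $\sigma$, and aim to show $\cvar(f,S)\le \vf(S)(V_1+V_2)$. First I would \textbf{regularise} $S$: whenever a segment $\ls[\vecx_j,\vecx_{j+1}]$ has one endpoint in $\sigma_1\setminus\sigma_2$ and the other in $\sigma_2\setminus\sigma_1$, use convex joining to choose $\vecw\in\sigma_1\cap\sigma_2$ on that segment and insert $\vecw$ between $\vecx_j$ and $\vecx_{j+1}$. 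Performing this for every such segment yields a list $\hat S = \plist{\vecy_0,\dots,\vecy_m}$ in which every segment $\ls[\vecy_{i-1},\vecy_i]$ has \emph{both} endpoints in $\sigma_1$ or both in $\sigma_2$ (a brief case check confirms no bad segments are created, since $\vecx_j,\vecw\in\sigma_1$ and $\vecw,\vecx_{j+1}\in\sigma_2$). Because each inserted $\vecw$ lies on the segment it subdivides, the piecewise linear curve $\gamma_{\hat S}$ coincides with $\gamma_S$, so $\vf(\hat S)=\vf(S)$; this can be seen by checking the position of $\vecw$ relative to an arbitrary line $\ell$ against Definition~\ref{defn:x-seg}, or directly from Proposition~\ref{vf-component} since the curves agree up to reparametrisation. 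Also $\cvar(f,S)\le\cvar(f,\hat S)$ by the triangle inequality.

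Next I would \textbf{split the curve variation by set}. Let $I_1$ be the indices $i$ for which both endpoints of $\ls[\vecy_{i-1},\vecy_i]$ lie in $\sigma_1$, and $I_2$ the remaining indices; by regularisation, $i\in I_2$ forces both endpoints of that segment into $\sigma_2$. Then $\cvar(f,\hat S)=\Sigma_1+\Sigma_2$ with $\Sigma_k=\sum_{i\in I_k}|f(\vecy_i)-f(\vecy_{i-1})|$. To bound $\Sigma_1$, let $S_1$ be the sublist of $\hat S$ formed by the $\vecy_i$ lying in $\sigma_1$, in order; it is a list in $\sigma_1$ obtained from $\hat S$ by deleting points, so $\vf(S_1)\le\vf(\hat S)=\vf(S)$ by Proposition~\ref{vf-lemma}. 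The bookkeeping step is: a consecutive pair of $S_1$ is either a segment of $\hat S$ lying in $I_1$, or else it straddles a maximal run of $\hat S$ lying in $\sigma_2$ whose two endpoints lie in $\sigma_1\cap\sigma_2$, contributing only an extra nonnegative term $|f(\vecv)-f(\vecu)|$. Hence $\Sigma_1\le\cvar(f,S_1)\le\vf(S_1)\,V_1\le\vf(S)\,V_1$, and symmetrically $\Sigma_2\le\vf(S)\,V_2$. Combining, $\cvar(f,S)\le\cvar(f,\hat S)=\Sigma_1+\Sigma_2\le\vf(S)(V_1+V_2)$; taking the supremum over $S$ gives $\myvar(f,\sigma)\le V_1+V_2$.

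I expect the main obstacle to be this regularisation bookkeeping rather than any hard estimate: one must check carefully that after inserting the convex‑joining points every segment of $\hat S$ genuinely stays inside a single $\sigma_i$ (so that $I_2$ consists of $\sigma_2$‑segments), and that deleting the $\sigma_2\setminus\sigma_1$ points from $\hat S$ adds only nonnegative terms to the curve variation, with each such term correctly identified as the jump across a maximal $\sigma_2$‑run whose endpoints lie in $\sigma_1\cap\sigma_2$. The small geometric fact that inserting a point lying on a segment leaves the variation factor unchanged (not just non‑decreasing, as in Proposition~\ref{vf-lemma}) should be stated explicitly, as the whole argument relies on $\vf(\hat S)=\vf(S)$.
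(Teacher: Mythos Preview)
Your proposal is correct and follows essentially the same route as the paper: insert convex-joining points on the mixed segments to obtain a list with unchanged variation factor and no larger curve variation, then form the two sublists $S_1,S_2$ by deleting the points not in $\sigma_i$, use $\vf(S_i)\le\vf(\hat S)=\vf(S)$ (Proposition~\ref{vf-lemma}), and observe that the curve variation of $\hat S$ is dominated by $\cvar(f,S_1)+\cvar(f,S_2)$. The paper phrases the last splitting step slightly differently---rather than partitioning segments into $I_1,I_2$ it simply notes that every consecutive pair of $\hat S$ survives in at least one $S_i$---but the content is identical.
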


\begin{proof}
The left hand inequalities are a simple consequence of Theorem~\ref{bv-restrict}. 

Suppose then that $S=[\vecz_0,\vecz_1,\dots,\vecz_n]$ is a finite list of points in $\sigma$.
Since $\sigma_1$ and $\sigma_2$ join convexly, for any $j$ for which $\vecz_{j-1}$ and $\vecz_{j}$ do not lie in the same $\sigma_i$, we can find a point $\vecw_j$ in $\sigma_1 \cap \sigma_2$ which lies on the line segment $\ls[\vecz_{j-1},\vecz_{j}]$. Inserting each such $\vecw_j$ into the list $S$ between $\vecz_{j-1}$ and $\vecz_{j}$ produces an extended list $S^+=[\vecz_0^+,\vecz_1^+,\dots,\vecz_{m}^+]$ with the property that for each $j$ either $\{\vecz_{j-1}^+,\vecz_j^+\} \subseteq \sigma_1$, or $\{\vecz_{j-1}^+,\vecz_j^+\} \subseteq \sigma_2$ (or both).
Note that adding these points does not change the variation factor, and can only increase the curve variation of~$f$. That is $\vf(S)=\vf(S^+)$ and $\cvar(f,S)\leq\cvar(f,S^+)$.

For $i = 1,2$, let $S_i^+=[\vecz_0^{(i)},\vecz_1^{(i)},\dots,\vecz_{n_i}^{(i)}]$ be the list obtained by removing from $S^+$ all points which do not lie in $\sigma_i$.
Every pair of consecutive points $\vecz_{j-1}^+$ and $\vecz_{j}^+$ in $S^+$ is in at least one of $S_1^+$ and $S_2^+$. Thus
\begin{align*}
\sum_{j=1}^m |f(\vecz_j^+)-f(\vecz_{j-1}^+)|	&	\leq \sum_{i=1}^2\sum_{j=1}^{n_i} \left|f\big(\vecz_j^{(i)}\big)-f\big(\vecz_{j-1}^{(i)}\big)\right|,
\end{align*}
where an empty sum is interpreted to have a value of zero. Hence
\begin{align*}
\myvar(f,S) &	=\frac{\cvar(f,S)}{\vf(S)}\\
&	\leq\frac{\cvar(f,S^+)}{\vf(S^+)}\\
&	=\frac{1}{\vf(S^+)}\sum_{j=1}^m|f(\vecz_j^+)-f(\vecz_{j-1}^+)|\\
&	\leq \sum_{i=1}^2 \frac{1}{\vf(S_i^+)} \sum_{j=1}^{n_i} \left|f\big(\vecz_j^{(i)}\big)-f\big(\vecz_{j-1}^{(i)}\big)\right|\\
&	= \frac{\cvar(f|\sigma_1,S_1^+)}{\vf(S_1^+)}+\frac{\cvar(f|\sigma_2,S_2^+)}{\vf(S_2^+)}\\
&	\leq\myvar(f|\sigma_1,\sigma_1)+\myvar(f|\sigma_2,\sigma_2).
\end{align*}
The result follows from the fact that $\|f\|_\infty$ is equal to $\|f|\sigma_1\|_\infty$ or $\|f|\sigma_2\|_\infty$.

\end{proof}

When $\sigma\subset\mR$, it follows from the classical definition of absolute continuity that $f\in AC(\sigma)$ implies $|f|\in AC(\sigma)$, and so the real-valued absolutely continuous functions $AC_\mR(\sigma)$ form a lattice. Moreover, we have seen that $\BV_\mR(\sigma)$ is a lattice under pointwise operations.
\bigskip

\noindent\textbf{Open Problem.}  For general compact $\sigma \subseteq \mR^2$, is $AC_\mR(\sigma)$ a lattice under pointwise operations?
\bigskip

The obstacle in proving that $AC_\mR(\sigma)$ is a lattice is that it is unclear whether $f\in AC(\sigma)$ implies $|f|\in AC(\sigma)$. For a positive answer, it would be sufficient to have the following result, which is certainly true in the case of $\sigma\subset\mR$.

\begin{conj}
	Suppose that $\sigma$ is a nonempty compact subset of $\mC$, $f\in AC(\sigma)$ and $\varepsilon>0$. Then there is a compact neighbourhood $U_\varepsilon$ of $Z(f)$ such that $\myvar(f, U_\varepsilon) < \varepsilon$.
\end{conj}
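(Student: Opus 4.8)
The plan is to reduce the conjecture, by a polynomial approximation, to a statement about the continuity ``from above'' of the set function $U \mapsto \myvar(f,U)$, and then to attack that statement for polynomials in the spirit of the one-variable proof.

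First, the reduction. Write $Z = \{z \in \sigma : f(z) = 0\}$ for the zero set of $f$. Since $f$, being absolutely continuous, is continuous, $Z$ is a compact subset of $\sigma$ and $f|_Z \equiv 0$, so $\myvar(f,Z) = 0$. For $\rho > 0$ set $U_\rho = \{z \in \sigma : \dist(z,Z) \le \rho\}$; these are compact neighbourhoods of $Z$ in $\sigma$ that decrease to $Z$ as $\rho \downarrow 0$. Given $\varepsilon > 0$, pick $p \in \Pol$ with $\normbv{f-p} < \varepsilon/4$. Applying Lemma~\ref{lem:bvc:5540}(1) on the set $U_\rho$, then Theorem~\ref{bv-restrict}, gives
\begin{align*}
 \myvar(f,U_\rho) &\le \myvar(f-p,U_\rho) + \myvar(p,U_\rho) \\
  &\le \normbv{f-p} + \myvar(p,U_\rho) < \tfrac{\varepsilon}{4} + \myvar(p,U_\rho),
\end{align*}
while Lemma~\ref{lem:bvc:5540}(4) on $Z$, together with $\myvar(f,Z) = 0$, gives $\myvar(p,Z) \le \myvar(p-f,Z) \le \normbv{f-p} < \varepsilon/4$. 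Thus it would suffice to prove that, for every polynomial $p$,
\[
 \myvar(p,U_\rho) \longrightarrow \myvar(p,Z) \qquad \text{as } \rho \downarrow 0 :
\]
one then chooses $\rho$ with $\myvar(p,U_\rho) < \myvar(p,Z) + \varepsilon/4 < \varepsilon/2$ and concludes that $\myvar(f,U_\rho) < \varepsilon$. Since $\myvar(p,U_\rho) \ge \myvar(p,Z)$ is automatic from Theorem~\ref{bv-restrict}, the content is the limiting upper bound, i.e.\ that $\myvar(p,\cdot)$ is continuous from above at the compact set $Z$.

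To prove this for a polynomial $p$ one would like to imitate the argument when $\sigma \subseteq \mR$, where one extends $f$ to $F \in AC[a,b]$, uses $\myvar(f,E) = \int_E |F'|$ and the fact that $F' = 0$ a.e.\ on $Z$ (by the Lebesgue density theorem, $Z$ being closed with $F|_Z \equiv 0$), and then applies absolute continuity of the integral together with outer regularity of Lebesgue measure. The plane analogue would require a finite Borel measure $\mu_p$ on $\sigma$, encoding the directional data hidden in the variation factor, with $\myvar(p,U) = \int_U |\grad p|\,d\mu_p$ for compact $U$, together with the vanishing of $\grad p$ $\mu_p$-a.e.\ near $Z$ (using $\myvar(p,Z) < \varepsilon/4$ in place of ``$p$ vanishes on $Z$''). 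Failing such a representation, a direct attack is to take a list $S$ in the collar $U_\rho$, replace every point of $S$ lying in $U_\rho \setminus Z$ by a nearest point of $Z$ to get a list $\hat S$ in $Z$, and note that $|\cvar(p,S) - \cvar(p,\hat S)| \le 2(\#S)\,\omega_p(\rho)$, where $\omega_p$ is the modulus of continuity of $p$, while $\cvar(p,\hat S) \le \myvar(p,Z)\,\vf(\hat S)$; the estimate would then close if $\vf(\hat S)$ could be controlled by $\vf(S)$.

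The main obstacle is precisely this last point. Unlike the orthogonal projections onto lines handled in Proposition~\ref{vf-proj}, the nearest-point map onto a general compact set $Z$ is neither affine nor even well-behaved, so it can fold $\gamma_S$ onto itself and increase the variation factor; and the crude term $2(\#S)\,\omega_p(\rho)$ is worthless against a long ``zig--zag'' list confined to a narrow strip, for which $\#S/\vf(S)$ is unbounded. Controlling $\vf(\hat S)$ seems to need geometric information about the particular set $Z$ --- already when $Z$ is a circle one is led to ask whether radially projecting a list onto that circle can increase its variation factor by more than a bounded factor. The measure-theoretic route, on the other hand, needs exactly what is currently missing in the plane: a usable structure theory for $BV(\sigma)$ and $AC(\sigma)$ --- an integral representation of the localized variation, and a Lebesgue-differentiation theorem adapted to it. A plausible intermediate target, itself apparently open, is to show that $U \mapsto \myvar(f,U)$ is continuous from above for every $f \in AC(\sigma)$; proving that, or finding a rerouting of low-variation-factor lists through $Z$ that does not inflate the variation factor, is where the real difficulty lies.
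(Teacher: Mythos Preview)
The statement you are attempting is labelled a \emph{Conjecture} in the paper and is explicitly presented there as open; the paper gives no proof, only the remark that it holds when $\sigma \subseteq \mR$ and that it would imply that $AC_\mR(\sigma)$ is a lattice. There is therefore no proof of the paper's to compare against.

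Your proposal is not a proof either, and you say so yourself. The reduction step is sound: approximating $f$ by a polynomial $p$ with $\normbv{f-p} < \varepsilon/4$ and applying Lemma~\ref{lem:bvc:5540} and Theorem~\ref{bv-restrict} does reduce the question to showing $\myvar(p,U_\rho) \to \myvar(p,Z)$ as $\rho \downarrow 0$ for a fixed polynomial $p$ (with $Z = Z(f)$ and $U_\rho$ determined by $f$, not $p$). The obstacles you then identify are genuine and are precisely why the conjecture remains open. The nearest-point retraction onto a general compact $Z$ can indeed inflate the variation factor without bound, so the estimate $|\cvar(p,S) - \cvar(p,\hat S)| \le 2(\#S)\,\omega_p(\rho)$ is useless absent control of $\#S/\vf(S)$; and no integral representation of the localized two-dimensional variation is currently available to run the measure-theoretic argument. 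Your analysis accurately locates the difficulty but does not resolve it, which matches the status of the problem in the paper.
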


\subsection{Lipschitz functions}

In this section we shall consider $\sigma$ to be a subset of $\mC$. By the standard identifications,
 the identity function $\zeta: \sigma \to \mC$, $\zeta(x+iy) = x+iy = p_x(x,y)+i p_y(x,y)$ lies in $\Pol$, and so it is certainly always a function of bounded variation. Recall that the diameter of $\sigma$ is defined as $\diam{\sigma} = \max\{|\vecx - \vecy| \st \vecx,\vecy \in \sigma\}$. Let $|\sigma_\mR|$ and
$|\sigma_{i\mR}|$ denote the diameters of the projections of $\sigma$ onto the real and imaginary axes. Then, by the proof of Theorem~\ref{ext-1var}, we have that $\myvar(p_x,\sigma)\leq |\sigma_\mR|$ and $\myvar(p_y,\sigma) \leq |\sigma_{i\mR}|$, and so
  \begin{equation}\label{Vsigma-bounds}
   \max\{|\sigma_\mR|,|\sigma_{i\mR}|\}
   \leq \diam{\sigma}
   \leq \myvar(\zeta,\sigma)
   \leq \myvar(p_x,\sigma) + \myvar(p_y,\sigma)
   \leq |\sigma_\mR|+|\sigma_{i\mR}|.
  \end{equation}

\begin{defn}
Supppose that $\sigma$ is a nonempty compact subset of $\mC$. The \textbf{variation constant} of $\sigma$ is the number $C_\sigma = \myvar(\zeta,\sigma)$.
\end{defn}

The upper and lower bounds for $C_\sigma$ given in (\ref{Vsigma-bounds}) can both be obtained. Certainly if $\sigma$ lies in a line, then $C_\sigma = \diam{\sigma} = |\sigma_\mR|$ (and of course $|\sigma_{i\mR}| = 0$). On the other hand, if $\sigma = \{0,1,i,1+i\}$, Theorem~\ref{conv-bounds-thm} implies that $C_\sigma \geq 2 = |\sigma_{\mR}|+|\sigma_{i\mR}|$.


For a set $\sigma$ containing at least 2 elements, the Banach algebra $\Lip(\sigma)$ of
Lipschitz functions on $\sigma$ is the space
of all functions $f$ such that
$\norm{f}_{\Lip(\sigma)} = \norm{f}_\infty + L_\sigma(f)$ is finite,
where
  \[ L_\sigma(f) =
              \sup \Bigl\{ \frac{|f(\vecx)-f(\vecx')|}{|\vecx-\vecx'|} \st  \vecx\ne  \vecx' \in \sigma \Bigr\}. \]
(Again one might include the trivial case of a singleton set by defining $L_\sigma(f)$ to be always zero in that case.)

\begin{thm}\label{Lip-fns}
Let $\sigma$ be a nonempty compact subset of $\mC$. Then for $f \in \Lip(\sigma)$,
\begin{align*}
	\normbv{f} \le \max\{1,C_\sigma\} \norm{f}_{\Lip(\sigma)}
\end{align*}
and so $\Lip(\sigma) \subseteq BV(\sigma)$.
\end{thm}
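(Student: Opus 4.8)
The plan is to estimate $\myvar(f,\sigma)$ directly from the definition using the Lipschitz bound on the increments of $f$ and the definition of the variation constant $C_\sigma$. Fix a finite ordered list $S = [\vecx_0,\dots,\vecx_n]$ of points in $\sigma$. For each $i$ we have $|f(\vecx_i) - f(\vecx_{i-1})| \le L_\sigma(f)\,|\vecx_i - \vecx_{i-1}|$, and summing gives $\cvar(f,S) \le L_\sigma(f)\,\cvar(\zeta,S)$, since $\cvar(\zeta,S) = \sum_{i=1}^n |\vecx_i - \vecx_{i-1}|$. Dividing by $\vf(S)$ yields
\[
  \frac{\cvar(f,S)}{\vf(S)} \le L_\sigma(f)\,\frac{\cvar(\zeta,S)}{\vf(S)} \le L_\sigma(f)\,\myvar(\zeta,\sigma) = L_\sigma(f)\,C_\sigma.
\]
Taking the supremum over all lists $S$ gives $\myvar(f,\sigma) \le C_\sigma\, L_\sigma(f)$.

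From here the norm estimate is routine: adding the sup-norm term,
\[
  \normbv{f} = \norm{f}_\infty + \myvar(f,\sigma) \le \norm{f}_\infty + C_\sigma\, L_\sigma(f) \le \max\{1,C_\sigma\}\bigl(\norm{f}_\infty + L_\sigma(f)\bigr) = \max\{1,C_\sigma\}\,\norm{f}_{\Lip(\sigma)}.
\]
This shows that $f \in BV(\sigma)$ with the claimed bound, and hence $\Lip(\sigma) \subseteq BV(\sigma)$. One should also dispose of the trivial case where $\sigma$ is a singleton, but there every function has zero variation and the inequality is immediate (with $L_\sigma(f)$ taken to be zero).

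I do not anticipate any real obstacle here: the argument is a one-line comparison of $\cvar(f,S)$ with $\cvar(\zeta,S)$ at the level of each individual list, exploiting that the variation factor $\vf(S)$ depends only on the geometry of $S$ and not on $f$, so it can be factored through unchanged. The only minor point to be careful about is to invoke the already-established fact that $\myvar(\zeta,\sigma) = C_\sigma$ is finite (which follows from $\zeta \in \Pol \subseteq BV(\sigma)$, or directly from the bound $C_\sigma \le |\sigma_\mR| + |\sigma_{i\mR}|$ in~(\ref{Vsigma-bounds})), so that the right-hand side makes sense.
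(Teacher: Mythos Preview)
Your proof is correct and follows essentially the same approach as the paper: bound each increment by $L_\sigma(f)\,|\vecx_i-\vecx_{i-1}|$, recognize the resulting sum as $L_\sigma(f)\,\cvar(\zeta,S)$, divide by $\vf(S)$, and use $C_\sigma = \myvar(\zeta,\sigma)$. The paper's argument is identical up to notation.
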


\begin{proof}
Suppose that $f \in \Lip(\sigma)$ and that $S = \plist{\vecx_0,\dots,\vecx_n}$ is a list of points in $\sigma$. Then
  \begin{align*}
   \frac{\cvar(f,S)}{\vf(S)} & = \frac{1}{\vf(S)} \sum_{i=1}^n |f(\vecx_i) - f(\vecx_{i-1})| \\
   &\le \frac{L_\sigma(f)}{\vf(S)} \sum_{i=1}^n |\vecx_i - \vecx_{i-1}| \\
   &\le L_\sigma(f) C_\sigma.
  \end{align*}
It follows that
  \[ \normbv{f} = \norminf{f}+ \myvar(f,\sigma) \le \norminf{f} + L_\sigma(f) C_\sigma \le \max\{1,C_\sigma\} \norm{f}_{\Lip(\sigma)}. \]
\end{proof}

If $\sigma \subseteq \mR$ then every Lipschitz function is absolutely continuous and indeed $AC(\sigma)$ is the closure of $\Lip(\sigma)$ in that case. For more general sets this inclusion may fail. Before giving the example  we require the following lemma from \cite{AD1}.

\begin{lem}\label{ac-var-lim} Let $\sigma = [0,1]^2$ and for $t \in [0,1]$, let $\sigma_t = \{(x,t) \st x\in[0,1]\}$. For all $f \in AC(\sigma)$,
$\displaystyle \lim_{t \to 0^+} \myvar(f,\sigma_t) = \myvar(f,\sigma_0)$.
\end{lem}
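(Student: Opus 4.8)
The plan is to reduce to polynomials by density and then prove the statement for polynomials by an elementary continuity argument, gluing the two by a standard $3\varepsilon$ estimate.

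\textbf{Setup.} First I would note that each $\sigma_t = \{(x,t) \st x\in[0,1]\}$ lies in a horizontal line, so by affine invariance (Theorem~\ref{aff-inv}) together with Proposition~\ref{varreal}, $\myvar(g,\sigma_t)$ is exactly the classical one-variable variation on $[0,1]$ of the function $x \mapsto g(x,t)$. In particular, by Theorem~\ref{bv-restrict}, restriction to $\sigma_t$ is norm non-increasing, so for any $f,g \in BV(\sigma)$ and any $t$ (including $t=0$), Lemma~\ref{lem:bvc:5540}(4) gives $|\myvar(f,\sigma_t) - \myvar(g,\sigma_t)| \le \myvar(f-g,\sigma_t) \le \myvar(f-g,\sigma) \le \norm{f-g}_{BV(\sigma)}$. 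This uniform-in-$t$ comparison is the only thing I will need about how the line variations relate to the planar variation.

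\textbf{The polynomial case.} Fix $p \in \Pol$ and write $p_t(x) = p(x,t)$, a polynomial in $x$ whose coefficients depend polynomially on $t$. By the identification above and the classical formula for the variation of a $C^1$ function, $\myvar(p,\sigma_t) = \myvar_{[0,1]}(p_t) = \int_0^1 |p_t'(x)|\,dx$. Since $\partial_x p$ is continuous on the compact square $[0,1]^2$, it is uniformly continuous there, so $p_t' \to p_0'$ uniformly on $[0,1]$ as $t\to 0$, and hence $\int_0^1 |p_t'| \to \int_0^1|p_0'|$. (Equivalently, one may argue directly: $|\myvar(p,\sigma_t) - \myvar(p,\sigma_0)| \le \myvar_{[0,1]}(p_t - p_0) = \int_0^1 |\partial_x p(x,t) - \partial_x p(x,0)|\,dx \to 0$.) Thus $\lim_{t\to 0^+}\myvar(p,\sigma_t) = \myvar(p,\sigma_0)$ for every polynomial $p$; note the estimate is in fact two-sided in $t$, so the one-sidedness of the limit is immaterial.

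\textbf{Conclusion.} Given $f \in AC(\sigma)$ and $\varepsilon>0$, choose $p \in \Pol$ with $\norm{f-p}_{BV(\sigma)} < \varepsilon$. By the Setup, $|\myvar(f,\sigma_t) - \myvar(p,\sigma_t)| < \varepsilon$ for all $t \ge 0$, so
\[
  |\myvar(f,\sigma_t) - \myvar(f,\sigma_0)|
    \le |\myvar(f,\sigma_t) - \myvar(p,\sigma_t)|
      + |\myvar(p,\sigma_t) - \myvar(p,\sigma_0)|
      + |\myvar(p,\sigma_0) - \myvar(f,\sigma_0)|
    < 2\varepsilon + |\myvar(p,\sigma_t) - \myvar(p,\sigma_0)|.
\]
Letting $t \to 0^+$ and using the polynomial case yields $\limsup_{t\to 0^+} |\myvar(f,\sigma_t) - \myvar(f,\sigma_0)| \le 2\varepsilon$, and since $\varepsilon$ was arbitrary the limit is $0$. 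I do not expect any genuine obstacle here: the only points requiring care are identifying $\myvar(\cdot,\sigma_t)$ with the classical $1$-D variation (via Proposition~\ref{varreal} and affine invariance) and invoking $\myvar(g) = \int|g'|$ for $C^1$ functions on an interval; everything else is the triangle inequality.
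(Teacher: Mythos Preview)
Your proof is correct and follows essentially the same route as the paper's: approximate $f$ by a polynomial, use the restriction bound $\myvar(f-p,\sigma_t)\le\myvar(f-p,\sigma)$ uniformly in $t$, handle the polynomial case via $\myvar(p,\sigma_t)=\int_0^1|\partial_x p(x,t)|\,dx$, and finish with a $3\varepsilon$ estimate. The only cosmetic difference is that the paper bounds $|\partial_x p(x,t)-\partial_x p(x,0)|$ via the Mean Value Theorem and $\sup|\partial^2 p/\partial x\partial y|$, whereas you invoke uniform continuity of $\partial_x p$ on the square; both are equally valid.
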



\begin{proof}
Suppose that $f \in AC(\sigma)$. Fix $\varepsilon > 0$. By definition, there exists $p \in \Pol$ such that $\norm{f-p}_{BV(\sigma)} < \varepsilon/3$. This implies in particular that (using Lemma~\ref{lem:bvc:5540}) that for all $t$, $|\myvar(f,\sigma_t)-\myvar(p,\sigma_t)| \le \myvar(f-p,\sigma_t) \le \myvar(f-p,\sigma) < \varepsilon/3$.

Let $M = \sup\{\left|\frac{\partial^2 p}{\partial x \partial y}(x,y)\right| \st (x,y) \in \sigma\}$. By the Mean Value Theorem, there exist values $c(x,t) \in (0,1)$ such that
  \[ \frac{\partial p}{\partial x}(x,t)
   - \frac{\partial p}{\partial x}(x,0)
       = t \frac{\partial^2 p}{\partial x \partial y}(x,c(x,t)). \]
Suppose that $0 < t < \varepsilon/(3M)$.
By the Reverse Triangle Inequality then
  \begin{align*}
  |\myvar(p,\sigma_t) - \myvar(p,\sigma_0)|
  & = \left| \int_0^1
     \left| \frac{\partial p}{\partial x}(x,t) \right|
        - \left| \frac{\partial p}{\partial x}(x,0) \right|
                  \, dx \right| \\
  &\le  \int_0^1 t \left|\frac{\partial^2 p}{\partial x \partial y}(x,c(x,t)) \right| \, dx \\
  &< \frac{\varepsilon}{3M} \, M = \frac{\varepsilon}{3}.
  \end{align*}
Thus
  \begin{align*}
  |\myvar(f,\sigma_t) - \myvar(f,\sigma_0)| 
   &\le  |\myvar(f,\sigma_t) - \myvar(p,\sigma_t)|
      + |\myvar(p,\sigma_t) - \myvar(p,\sigma_0)| \\
   & \qquad\qquad   + |\myvar(p,\sigma_0) - \myvar(f,\sigma_0)| \\
      &< \varepsilon.
   \end{align*}
\end{proof}
%

The following example of a function which is in $\Lip(\sigma)$ but not in $AC(\sigma)$ was first given in \cite{Ash}.

\begin{ex}
Let $\psi: \mR \to \mR$ be the $1$-periodic function satisfying $\psi(x) = |x|$ for $-\frac{1}{2} \le x \le \frac{1}{2}$. Let $\sigma = [0,1]^2$ and for $t \in [0,1]$ let $\sigma_t = [0,1]\times\{t\}$.
For $n = 0,1,2,\dots$, let $t_n = 2^{-n}$ and define $f$ on $\sigma_{t_n}$ by $f(x,2^{-n}) = 2^{-n} \psi(2^n x)$. For each $x \in [0,1]$, now fix $f(x,t)$ to be linear in $t$ on each interval of the form $(t_{n+1},t_n)$. Finally let $f(x,0) = 0$ for all $x \in [0,1]$.

The graph of $f$ on each rectangle of the form $[k/2^m,(k+1)/2^m] \times [t_{n+1},t_n]$ is just a scaled version of graph of $f$ on $[0,1] \times [\frac{1}{2},1]$. One can use this to see that $f$ is indeed Lipschitz with $L(f) = 1$. Note however that $\myvar(f,\sigma_{t_n}) = 1$ for all $n$, while $\myvar(\sigma_0) = 0$, and so by Lemma~\ref{ac-var-lim}, $f \not\in AC(\sigma)$.
\end{ex}

Despite such examples, it is often easier to calculate $\norm{f}_{\Lip(\sigma)}$ than $\normbv{f}$. So the estimates in Theorem~\ref{Lip-fns} give a useful tool for proving convergence in $AC(\sigma)$. In particular, if $\{f_k\}$ is a sequence in $AC(\sigma)$ and $\norm{f-f_k}_{\Lip(\sigma)} \to 0$ then $f_k \to f$ in $BV(\sigma)$ and so $f \in AC(\sigma)$.

\subsection{Differentiable functions}\label{diff-fns}

One expects that sufficiently smooth functions should be absolutely continuous. Making that precise requires a degree of care since our functions are defined on compact rather than open subsets of the plane.

Even in the classical case of $\sigma = [0,1]$, there are continuous functions which fail to be of bounded variation. Indeed, it is easy to construct functions $f \in C[0,1]$ such that $f$ is infinitely differentiable on $(0,1)$ but $f$ is not of bounded variation. The function
  \[ f(x) = \begin{cases}
             x^2 \sin(x^{-2})   & x \ne 0, \\
             0                  & x =0
             \end{cases} \]
is differentiable on any open neighbourhood of $[0,1]$, but it does not lie in $BV[0,1]$.

Of course if $f'$ exists everywhere and is bounded, then $f$ is Lipschitz and hence $f \in BV[0,1]$ and indeed, in this classical setting, $f \in AC[0,1]$. We have then that $C^1[0,1] \subseteq \Lip[0,1] \subseteq AC[0,1]$.


For this section we shall consider the plane as being $\mR^2$ since we do not want to require complex differentiability of our functions.

\begin{defn} Suppose that $\sigma$ is a nonempty compact subset of $\mR^2$ and that $k$ is a positive integer. We shall denote by $C^k(\sigma)$ the vector space of all functions $f: \sigma \to \mC$ for which there exists an open neighbourhood $U$ of $\sigma$ and an extension $F$ of $f$ to $U$ such that $F\in C^k(\sigma)$.
	Similarly, we shall denote by $C^\infty(\sigma)$ the space of functions which have a $C^\infty$ extension on some open neighbourhood of $\sigma$.
\end{defn}

The reader is cautioned that there are several nonequivalent definitions of $C^k(\sigma)$  used in different areas of analysis;  see for example \cite{DD} or \cite{FLW}.

It is worth noting that if $f \in C^k(\sigma)$ then in fact $f$ admits a $C^k$ extension to the whole of $\mR^2$ and hence in particular to any open subset of the plane.

We show in \cite{DLS2} that in the general case, every $C^1$ function is absolutely continuous. This requires a significant amount of extra machinery, so here we will just show the easier result that being $C^2$ is sufficient. We begin with the case that $\sigma$ is a rectangle.

\begin{thm}\label{C2-rect}
	Suppose that $R$ is a rectangle in the plane. Then $C^2(R) \subseteq AC(R)$.
\end{thm}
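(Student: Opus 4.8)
The plan is to reduce the problem to a one-dimensional approximation in each variable and then combine the two directions. By affine invariance (Theorem~\ref{ac-affine}) we may assume $R = [0,1]^2$. Given $f \in C^2(R)$, we want to produce polynomials $p_n \in \Pol$ with $\|f - p_n\|_{BV(R)} \to 0$. The natural candidates are tensor-product Bernstein polynomials, or more simply the partial sums of a Taylor-type expansion; in either case the key is not just uniform convergence of $p_n$ to $f$ but uniform smallness of $\myvar(f - p_n, R)$.

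The main tool will be Theorem~\ref{Lip-fns}: since $C_R$ is finite, $\|g\|_{BV(R)} \le \max\{1,C_R\}\, \|g\|_{\Lip(R)}$, so it suffices to approximate $f$ in the Lipschitz norm by polynomials. Thus the problem becomes: given $f \in C^2(R)$, find $p_n \in \Pol$ with $\|f - p_n\|_\infty \to 0$ and $L_R(f - p_n) \to 0$. Since $f$ extends to a $C^2$ function $F$ on an open neighbourhood of $R$, both partial derivatives $\partial F/\partial x$ and $\partial F/\partial y$ are $C^1$, hence Lipschitz, on $R$. Now classical multivariate approximation (e.g. the Bernstein operator, or Jackson-type theorems, or the Stone--Weierstrass-plus-smoothing argument applied to $F$ on a slightly larger rectangle) gives polynomials $p_n$ such that $p_n \to F$, $\partial p_n/\partial x \to \partial F/\partial x$, and $\partial p_n/\partial y \to \partial F/\partial y$, all uniformly on $R$. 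From uniform convergence of the gradients, the Lipschitz constant $L_R(f - p_n)$ — which is controlled by $\sup_R |\nabla(F - p_n)|$ via the mean value theorem on the convex set $R$ — tends to zero. Combined with $\|f - p_n\|_\infty \to 0$, this gives $\|f - p_n\|_{\Lip(R)} \to 0$, hence $\|f - p_n\|_{BV(R)} \to 0$ by Theorem~\ref{Lip-fns}, so $f \in AC(R)$.

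Concretely I would carry out the steps in this order: (1) reduce to $R = [0,1]^2$ by Theorem~\ref{ac-affine}; (2) extend $f$ to $F \in C^2$ on an open neighbourhood of $R$, and restrict attention to a closed rectangle $R' \supseteq R$ on which $F$ and its first partials are uniformly continuous; (3) invoke simultaneous polynomial approximation of $F$ and its first-order partials in the uniform norm on $R'$ (the Bernstein polynomials of $F$ on a rectangle do this, since Bernstein operators reproduce derivatives in the limit); (4) use the mean value theorem on the convex set $R$ to bound $L_R(f - p_n) \le \sup_{R}\|\nabla(F - p_n)\| \to 0$; (5) apply Theorem~\ref{Lip-fns} to conclude $\|f - p_n\|_{BV(R)} \to 0$, so $f$ lies in the closure of $\Pol$, which is $AC(R)$.

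The main obstacle is step (3): one needs the polynomial approximants to converge to $f$ simultaneously with their first derivatives converging to the first derivatives of $f$, uniformly on $R$. Plain Weierstrass approximation gives only $C^0$ convergence, which is not enough to control the Lipschitz (or variation) norm. The cleanest fix is to note that the two-dimensional Bernstein polynomial $B_n(F)(x,y) = \sum_{j,k} F(j/n,k/n)\binom{n}{j}\binom{n}{k} x^j(1-x)^{n-j} y^k (1-y)^{n-k}$ satisfies $\partial_x B_n(F) \to \partial_x F$ and $\partial_y B_n(F) \to \partial_y F$ uniformly whenever $F \in C^1$, a standard (if slightly technical) fact about Bernstein operators; applying this to $F \in C^2 \subseteq C^1$ on a neighbourhood closes the argument. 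Alternatively, one can mollify $F$ first to get a real-analytic (indeed polynomially well-approximable) function whose derivatives are uniformly close to those of $F$, then approximate that by polynomials — but the Bernstein route is more self-contained and is the one I would write up.
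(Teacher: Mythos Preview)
Your proposal is correct and shares the paper's overall architecture: approximate $f$ by polynomials in the Lipschitz norm (controlling $L_R(f-p_n)$ via the gradient on the convex set $R$), then invoke Theorem~\ref{Lip-fns} to pass to $BV(R)$. The difference lies in how the simultaneous $C^1$ approximation is obtained. You appeal to the (true, standard) fact that two-variable Bernstein polynomials of a $C^1$ function converge together with their first partials; this is a clean black box and in fact only uses $f\in C^1$, so it is stronger than what is being claimed. The paper instead keeps everything self-contained and uses only the ordinary Weierstrass theorem: it approximates the \emph{second} partials $F_{xx},F_{xy},F_{yy}$ uniformly by polynomials and then integrates twice along coordinate axes to build a polynomial $p$ with $\|F-p\|_\infty$, $\|F_x-p_x\|_\infty$, $\|F_y-p_y\|_\infty$ all small. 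That construction is where the $C^2$ hypothesis is actually consumed, and it avoids importing any facts about Bernstein operators. Either route closes the argument; yours is shorter but less self-contained, while the paper's is elementary and explains why $C^2$ (rather than $C^1$) suffices for \emph{this} proof, deferring the sharper $C^1$ result to \cite{DLS2}. One small point: for the Mean Value Theorem step the paper first reduces to real-valued $F$ via Lemma~\ref{re-im}; you should either do the same or replace the MVT by the line-integral estimate $|(F-p)(\vecx)-(F-p)(\vecx')|\le \|\vecx-\vecx'\|\sup_R|\nabla(F-p)|$, which holds for complex-valued functions.
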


\begin{proof}
	Suppose that $F \in C^2(R)$. By Lemma~\ref{re-im} and affine invariance, we may assume that $F$ is real and $R=[0,1]^2$.
	
	Fix $\varepsilon > 0$.
	Then there exist polynomials (in two variables)
	$g^{xx}$, $g^{xy}$ and $g^{yy}$ such that on the square
	\[ \norm{F_{xx} - g^{xx}}_\infty < \varepsilon, \qquad
	\norm{F_{xy} - g^{xy}}_\infty < \varepsilon, \qquad
	\norm{F_{yy} - g^{yy}}_\infty < \varepsilon. \]
	Now define $h^x: R \to \mR$ by
	\[ h^x(x,y) = F_x(0,0) + \int_0^x g^{xx}(t,0) \, dt
	+ \int_0^y g^{xy}(x,s) \, ds. \]
	Note that $h^x$ is a polynomial, and that for $(x,y) \in R$,
	\begin{align}\label{sup-dif}
		| F_x(x,y) - h^x(x,y)|
		& = \smod{ F_x(0,0) + \int_0^x F_{xx}(t,0) \, dt
			+ \int_0^y F_{xy}(x,s) \, ds - h^x(x,y)} \notag\\
		&\le \int_0^x | F_{xx}(t,0) - g^{xx}(t,0)| \, dt
		+ \int_0^y | F_{xy}(x,s) - g^{xy}(x,s)| \, ds \notag\\
		&< 2 \varepsilon.
	\end{align}
	Thus $\norm{F_x - h^x}_\infty < 2 \varepsilon$. Similarly, define
	the polynomial $h^y$ by
	\[ h^y(x,y) = f_y(0,0) + \int_0^x g^{xy}(t,y) \, dt
	+ \int_0^y g^{yy}(0,s) \, ds. \]
	A similar calculation shows that $\norm{f_y - h^y}_\infty < 2
	\varepsilon$. Note also that
	\[ \frac{\partial h^y}{\partial x} = g^{xy}. \]
	Now define a polynomial $p$ by
	\[ p(x,y) = F(0,0) + \int_0^x h^x(t,0)\, dt
	+ \int_0^y h^y(x,s) \, ds. \]
	Repeating the calculation in (\ref{sup-dif}) shows that
	$\norm{F-p}_\infty < 4 \varepsilon$. Next, for $(x,y) \in R$,
	\begin{align*}
		\frac{\partial p}{\partial x}
		&= h^x(x,0) + \int_0^y \frac{\partial h^y}{\partial x}(x,s) \, ds
		\\
		&= h^x(x,0) + \int_0^y g^{xy}(x,s) \, ds
	\end{align*}
	and
	\[ \frac{\partial p}{\partial y}
	= h^y(x,y). \]
	Thus
	\begin{align*}
		| F_x(x,y) - p_x(x,y) |
		&= \smod{ F_x(x,0) + \int_0^y F_{xy}(x,s) \, ds
			- h^x(x,0) - \int_0^y g^{xy}(x,s) \, ds } \\
		&\le | F_x(x,0) - h^x(x,0)|
		+ \int_0^y | F_{xy}(x,y) - g^{xy}(x,s)| \, ds \\
		&< 3\varepsilon
	\end{align*}
	and
	\[
	| F_y(x,y) - p_y(x,y) |
	= | F_y(x,y) - h^y(x,y) | < 2 \varepsilon.
	\]
	Given distinct $\vecx,\vecx' \in [0,1]^2$, let $\vecu = (\vecx - \vecx')/\norm{\vecx - \vecx'}$.
	Noting that $F$ was assumed to be real valued, it now follows from the Mean Value Theorem that there exists $\vecv$ on the line segment joining $\vecx$ and $\vecx'$ such that
	\[ \frac{|(F-p)(\vecx) - (F-p)(\vecx')|}{\norm{\vecx - \vecx'}}
	= |\nabla (F-p)(\vecv) \cdot \vecu| \le \sqrt{13} \varepsilon \]
	and hence
	$\norm{F-p}_{\Lip(R)} < (4 + \sqrt{13}) \varepsilon$.
	
	We can therefore find a sequence $p_n$ of polynomials such that
	\[ \lim_{n \to \infty} \norm{F-p_n}_{\Lip(R)} = 0\]
	and hence by Theorem~\ref{Lip-fns}, $F \in \AC(R)$.
\end{proof}

It is worth noting that it is vital in this result that $f$ be differentiable on the boundary of $R$, not just on the interior.

\begin{thm}
	If $\sigma\subset\mC$ is nonempty and compact, then $C^2(\sigma)\subseteq AC(\sigma)$.
\end{thm}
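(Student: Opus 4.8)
The plan is to reduce the general case to the rectangle case already established in Theorem~\ref{C2-rect}, using the restriction result of Theorem~\ref{ac-restrict}.

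First I would unwind the definition of $C^2(\sigma)$. Given $f \in C^2(\sigma)$, by definition there is an open neighbourhood $U$ of $\sigma$ and an extension $F$ of $f$ to $U$ with $F \in C^2(U)$. As recorded in the remark immediately preceding Theorem~\ref{C2-rect}, such an extension may in fact be taken to be $C^2$ on the whole of $\mR^2$; so without loss of generality we may assume $F \in C^2(\mR^2)$ with $F|\sigma = f$. Next, since $\sigma$ is compact it is bounded, so we may choose a closed rectangle $R$ (with sides parallel to the coordinate axes, say) with $\sigma \subseteq R$. Then $F$ restricts to a function on $R$ which has a $C^2$ extension to an open neighbourhood of $R$ — namely $F$ itself — so $F|R \in C^2(R)$.

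Now Theorem~\ref{C2-rect} applies directly: $F|R \in AC(R)$. Finally I would invoke Theorem~\ref{ac-restrict} with the inclusion $\sigma \subseteq R$: since $F|R \in AC(R)$ and $\sigma$ is a nonempty compact subset of $R$, we conclude $(F|R)|\sigma \in AC(\sigma)$. But $(F|R)|\sigma = F|\sigma = f$, so $f \in AC(\sigma)$, as required.

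There is essentially no hard step here; the only point requiring a word of justification is the passage from a $C^2$ extension on some neighbourhood of $\sigma$ to a $C^2$ extension on all of $\mR^2$ (equivalently, on a neighbourhood of the chosen rectangle $R$), and this is exactly the remark stated just before Theorem~\ref{C2-rect}, which I would simply cite. Everything else is immediate from the two quoted theorems.
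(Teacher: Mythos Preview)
Your proposal is correct and follows essentially the same route as the paper: extend to a $C^2$ function on all of $\mR^2$, restrict to a rectangle $R \supseteq \sigma$, apply Theorem~\ref{C2-rect}, then restrict back to $\sigma$ via Theorem~\ref{ac-restrict}. The only cosmetic difference is that the paper explicitly invokes Taylor's theorem and the Whitney Extension Theorem to justify the global $C^2$ extension, whereas you cite the remark preceding Theorem~\ref{C2-rect} that records this fact.
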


\begin{proof}
	Suppose that $f\in C^2(\sigma)$. By definition, $f$ has an extension to a $C^2$ function defined on some open neighbourhood of $\sigma$. Taylor's theorem implies that $f$ satisfies the hypotheses of the Whitney Extension Theorem \cite{W} and hence $f$ may be extended to a $C^2$ function $F$ on $\mR^2$.
	
	Let $R$ denote a rectangle whose interior contains $\sigma$. Then the previous theorem tells us that $F|R\in AC(R)$, and hence $f=F|\sigma\in AC(\sigma)$.
\end{proof}
Since the polynomials are elements of $C^2(\sigma)$, we have the following corollary.
\begin{cor}
	The space $C^2(\sigma)$ is dense in $AC(\sigma)$.
\end{cor}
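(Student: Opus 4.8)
The plan is to exploit the inclusion chain $\Pol \subseteq C^2(\sigma) \subseteq AC(\sigma)$. The first inclusion holds because every polynomial in two real variables is certainly $C^2$ on all of $\mR^2$, so its restriction to $\sigma$ lies in $C^2(\sigma)$; the second inclusion is precisely the theorem just proved. Since $AC(\sigma)$ is by definition the closure of $\Pol$ in $(BV(\sigma),\normbv{\cdot})$, taking closures in the chain above gives
\[ AC(\sigma) = \overline{\Pol} \subseteq \overline{C^2(\sigma)} \subseteq \overline{AC(\sigma)} = AC(\sigma), \]
where the last equality uses that $AC(\sigma)$ is closed in $BV(\sigma)$. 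Hence $\overline{C^2(\sigma)} = AC(\sigma)$, which is exactly the assertion that $C^2(\sigma)$ is dense in $AC(\sigma)$.

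There is essentially no obstacle here: the statement is an immediate formal consequence of the preceding theorem together with the definition of $AC(\sigma)$. The only point worth stating explicitly is that the relevant closures are taken in the $BV(\sigma)$ norm (not the uniform norm), so that the density of $\Pol$ that defines $AC(\sigma)$ is exactly the density needed to conclude. I would therefore keep the proof to a single short paragraph recording the inclusion chain and the closure argument above.
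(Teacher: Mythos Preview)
Your proof is correct and is essentially the same as the paper's: the paper simply notes that the polynomials lie in $C^2(\sigma)$, so the density of $\Pol$ in $AC(\sigma)$ immediately gives the density of $C^2(\sigma)$. Your closure-chain formulation just makes this one-line observation explicit.
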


We note that not every absolutely continuous function is differentiable. In \cite{DLS2} we show that any function which is continuous and piecewise planar is absolutely continuous.

\section{Absolute continuity is a local property}

In this section we shall show that being in $\AC(\sigma)$ is determined by the behaviour of the function in a neighbourhood of each point.

Recall that the classical definition of absolute continuity for $f: [0,1] \to~\mC$ is that for all $\varepsilon > 0$ there exists $\delta > 0$ such that if $\{[x_i,x_i']\}_{i=1}^n$ is any collection of disjoint subintervals of total length less than $\delta$, then $\sum_{i=1}^n |f(x_i)-f(x_i')| < \varepsilon$.

Suppose that $f: [0,1] \to \mC$, and that for all $x \in [0,1]$ there exists an open interval $V_x$ containing $x$ such that $f|\overline{V}_x \in AC(\overline{V}_x)$. By compactness we can cover $[0,1]$ with a finite collection of these intervals $V_{x_1},\dots,V_{x_m}$. It is then easy to show that if $f$ satisfies the classical definition on each interval $\overline{V}_x$ then it must satisfy it on the whole interval $[0,1]$. Conversely, if $f \not\in AC[0,1]$ there must be some point $x_0 \in [0,1]$ such that $f$ is not absolutely continuous on the closure of any open interval containing $x_0$. The aim of this section is to show that this property extends to the new definition of absolute continuity given in this paper.

We shall say that a set $U$ is a \textit{compact neighbourhood} of a point $\vecx \in \sigma$ (with respect to $\sigma$) if there exists an open set $V\subseteq\mC$ containing $\vecx$ such that $U = \sigma \cap \overline{V}$.

\begin{thm}\label{patching-lemma}[Patching Lemma]
	Suppose that $\sigma$ is a nonempty compact subset of $\mC$ and $f : \sigma \to \mC$. Then $f \in \AC(\sigma)$ if and only if for every point $\vecx \in \sigma$ there exists a compact neighbourhood $U_\vecx$ of $\vecx$ in $\sigma$ such that $f|U_\vecx \in \AC(U_\vecx)$.
\end{thm}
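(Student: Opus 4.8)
The forward implication is immediate: if $f\in\AC(\sigma)$, then Theorem~\ref{ac-restrict} gives $f|U_\vecx\in\AC(U_\vecx)$ for \emph{every} compact subset $U_\vecx$ of $\sigma$. The content is the converse, and my plan is a localization via a smooth partition of unity which reduces everything to a single ``gluing'' estimate for the plane variation.

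\textbf{Reduction to a localized claim.} After shrinking each $V_\vecx$ (legitimate by Theorem~\ref{ac-restrict}) we may take $U_\vecx=\sigma\cap\overline{V_\vecx}$ with $V_\vecx$ open and bounded. The relatively open sets $\{\sigma\cap V_\vecx\}_{\vecx\in\sigma}$ cover the compact set $\sigma$, so finitely many $V_1,\dots,V_m$ suffice, with $f|U_j\in\AC(U_j)$ where $U_j=\sigma\cap\overline{V_j}$. Pick a smooth partition of unity $\phi_1,\dots,\phi_m$ on $\mR^2$ with $\phi_j\ge0$, $\supp\phi_j\subseteq V_j$ compact, and $\sum_j\phi_j\equiv1$ near $\sigma$. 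Then $f=\sum_j(\phi_j|\sigma)\,f$ on $\sigma$, so it is enough to show each $(\phi_j|\sigma)\,f\in\AC(\sigma)$. Since $\phi_j|U_j\in C^2(U_j)\subseteq\AC(U_j)$ and $\AC(U_j)$ is an algebra, $(\phi_j|\sigma)\,f$ restricts to an element of $\AC(U_j)$, and it vanishes on $\sigma\setminus K_j$ with $K_j:=\supp\phi_j\subseteq V_j$ compact. So the theorem reduces to: \emph{if $V\subseteq\mC$ is open, $U=\sigma\cap\overline V$, $K\subseteq V$ is compact, and $g\colon\sigma\to\mC$ vanishes on $\sigma\setminus K$ with $g|U\in\AC(U)$, then $g\in\AC(\sigma)$.} (Some such hypothesis is essential: the example preceding Theorem~\ref{variation-join} shows that $f|\sigma_1,f|\sigma_2\in\AC$ does not force $f\in\BV(\sigma)$.)

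\textbf{The localized claim.} Choose $\psi\in C_c^\infty(\mR^2)$ with $0\le\psi\le1$, $\psi\equiv1$ on $K$, and $K':=\supp\psi\subseteq V$, so $g=\psi g$ on $\sigma$. Take polynomials $p_k$ with $\norm{p_k|U-g|U}_{\BV(U)}\to0$ and set $e_k:=(\psi p_k)|\sigma-g=\psi(p_k-g)$. Then $(\psi p_k)|\sigma\in C^\infty(\sigma)\subseteq\AC(\sigma)$, the function $e_k$ vanishes on $\sigma\setminus K'$ (so $\norm{e_k}_{\infty,\sigma}=\norm{e_k}_{\infty,U}$), and $e_k|U=(\psi|U)(p_k|U-g|U)\to0$ in $\BV(U)$; hence $\norm{e_k}_{\infty,U}\to0$ and $\myvar(e_k,U)\to0$. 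It therefore suffices that $\myvar(e_k,\sigma)\to0$, for then $(\psi p_k)|\sigma\to g$ in $\BV(\sigma)$ and $g\in\AC(\sigma)$. Fix an open $W$ with $K'\subseteq W\subseteq\overline W\subseteq V$ and set $\eta=\dist(K',\mC\setminus W)>0$. I would prove
\[
  \myvar(e_k,\sigma)\le\myvar(e_k,U)+C\,\norm{e_k}_{\infty,U},
\]
with $C$ depending only on $K'$ and $W$. For a list $S=[\vecz_0,\dots,\vecz_n]$ in $\sigma$, group the points lying in $\overline W\cap\sigma\,(\subseteq U)$ into maximal runs. Because $e_k$ vanishes off $K'\subseteq W$, every nonzero term of $\cvar(e_k,S)$ is either internal to a run, or is a single boundary term $\abs{e_k(\vecz_i)}$ with $\vecz_i$ an endpoint of a run lying in $K'$ (its run-neighbour being outside $\overline W$, hence a zero of $e_k$). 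Concatenating (in order) those runs that meet $K'$ gives a sublist $T$ of $S$ contained in $U$, so $\vf(T)\le\vf(S)$ by the remark following Proposition~\ref{vf-lemma}; the internal terms then contribute at most $\cvar(e_k|U,T)\le\vf(S)\,\myvar(e_k,U)$, and the boundary terms at most $2r'\norm{e_k}_{\infty}$, where $r'$ is the number of runs meeting $K'$. Dividing by $\vf(S)$ and taking the supremum, the displayed bound follows once we know $r'\le C_0\,\vf(S)$ for a constant $C_0$ depending only on $K'$ and $W$.

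\textbf{The main obstacle.} Everything hinges on this last inequality $r'\le C_0\vf(S)$ --- a purely geometric fact about finite lists, and the one place where the failure of gluing (the example before Theorem~\ref{variation-join}) is sidestepped: a list that enters $K'$, leaves $W$, re-enters $K'$, and so on $r'$ times must ``spend'' a comparable amount of variation factor. Cover $K'$ by $N$ disks of radius $\eta/3$; by the pigeonhole principle one disk $D=B(\vecc,\eta/3)$ contains the chosen $K'$-point of at least $r'/N$ of the runs, and between two consecutive such runs $\gamma_S$ must reach a vertex at distance $\ge\eta$ from $\vecc$, hence on the far side of one of the $8$ lines $\{\vecx:(\vecx-\vecc)\cdot\vecu^{(s)}=\eta/2\}$ for $8$ evenly spaced unit vectors $\vecu^{(s)}$; pigeonholing again over these $8$ lines, one of them carries a crossing segment of $S$ (in the sense of Definition~\ref{defn:x-seg}) on each of $\gtrsim r'/(8N)$ pairwise disjoint arcs of $\gamma_S$, forcing $\vf(S)\gtrsim r'/(8N)$. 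With $r'\le C_0\vf(S)$ in hand, the displayed bound yields $\myvar(e_k,\sigma)\to0$, finishing the localized claim and the theorem. I expect the delicate bookkeeping to be precisely this double pigeonhole --- fixing the directions, and checking that the resulting arcs are genuinely disjoint and each really contains a crossing segment of the list.
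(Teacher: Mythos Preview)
Your overall architecture matches the paper's exactly: a smooth partition of unity reduces the converse to an extension lemma (extend by zero a function in $\AC(U)$ whose support lies well inside the patch, to an element of $\AC(\sigma)$), and that lemma is proved by approximating with $\psi p$ for a bump $\psi$ and a polynomial $p$, then bounding $\myvar$ of the error on $\sigma$ in terms of its $\BV$-norm on $U$.

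The one substantive difference is in the geometric estimate. You allow an arbitrary compact $K'\subseteq W\subseteq V$ and then work to prove $r'\le C_0\,\vf(S)$ via a double pigeonhole (cover $K'$ by small disks, then test against eight half-planes through the chosen centre). Your sketch does go through: with $\vecc\in K'$, any vertex outside $\overline W$ lies at distance $\ge\eta$ from $\vecc$, every point of $D$ lies at distance $<\eta/3<\eta/2$, and Proposition~\ref{vf-component} turns each near--far transition into a distinct component of $L(S,\ell)$; one gets $r'\le C\,N\,\vf(S)$ with $N$ the number of covering disks. But the paper sidesteps all of this by taking the neighbourhoods $V_\vecx$ to be \emph{open rectangles} (which is free, by Theorem~\ref{ac-restrict}). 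The extension lemma (Lemma~\ref{ext-gen}) is then stated for a closed rectangle $R$ with $\supp g\subset\interior{R}$, and the quantity you call $r'$ becomes simply $|J_3|$, the number of segments with exactly one endpoint in $\sigma_R$; each such segment crosses one of the four lines containing the edges of $R$, so $|J_3|\le 4\,\vf(S)$ with no pigeonholing at all. Your route yields a slightly more general localized lemma (arbitrary compact support inside an arbitrary open set) at the cost of the extra combinatorics; the paper's choice of rectangular patches makes the key inequality a one-liner.
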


The main step in proving the Patching Lemma is the following extension result. As usual we shall let $\interior{R}$ denote the interior of a set $R$, and let $\supp g$ denote the support of a function $g$.

\begin{lem}\label{ext-gen} Let $\sigma$ be a nonempty compact subset of $\mC$.
	Suppose that $R$ is a closed rectangle in $\mC$ with $\sigma_R=R \cap \sigma \ne \emptyset$.
	Suppose that $g \in \AC(\sigma_R)$ with $\supp g \subset \interior{R}$. Then the function
	\[ \tilde g(\vecx) = \begin{cases}
		g(\vecx) & \textup{ if $\vecx \in \sigma_R$},\\
		0          &  \textup{ if  $\vecx \in \sigma \setminus \sigma_R$}
	\end{cases}
	\]
	lies in $\AC(\sigma)$.
\end{lem}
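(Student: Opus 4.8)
The plan is to produce $\tilde g$ as a $BV(\sigma)$-limit of polynomials by combining a polynomial approximation of $g$ on $\sigma_R$ with a smooth cutoff supported in $\interior R$. Fix $\varphi\in C^\infty(\mR^2)$ with $0\le\varphi\le 1$, with $\varphi\equiv 1$ on a neighbourhood of $\supp g$, and with $\supp\varphi$ a compact subset of $\interior R$. Choose polynomials $p_n$ with $p_n\to g$ in $BV(\sigma_R)$ and set $q_n=\varphi\,p_n$. Each $q_n$ is $C^\infty$ on the plane, so on a rectangle $R'$ whose interior contains $\sigma$ we have $q_n|_{R'}\in C^2(R')\subseteq\AC(R')$ by Theorem~\ref{C2-rect}, whence $q_n|_\sigma\in\AC(\sigma)$ by Theorem~\ref{ac-restrict}. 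Since $\varphi g=g$ on $\sigma_R$ (because $\varphi=1$ where $g$ is nonzero) and $\varphi$ vanishes off $\interior R$, a direct check shows that $q_n|_\sigma-\tilde g$ is precisely the extension by zero to $\sigma$ of $\varphi(p_n-g)|_{\sigma_R}$; this is a $BV(\sigma_R)$-function supported in the fixed compact set $\supp\varphi\subset\interior R$, and $\norm{\varphi(p_n-g)}_{BV(\sigma_R)}\le\norm{\varphi}_{BV(\sigma_R)}\norm{p_n-g}_{BV(\sigma_R)}\to 0$ by the Banach-algebra inequality. So the lemma reduces to the boundedness statement: if $h\in BV(\sigma_R)$ has $\supp h$ a compact subset of $\interior R$, then the extension by zero $\tilde h$ lies in $BV(\sigma)$ with $\normbv{\tilde h}$ controlled by $\norm{h}_{BV(\sigma_R)}$. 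Granting this, $q_n|_\sigma\to\tilde g$ in $BV(\sigma)$, and since $\AC(\sigma)$ is closed, $\tilde g\in\AC(\sigma)$.

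For that reduction I would prove $\myvar(\tilde h,\sigma)\le\myvar(h,\sigma_R)+10\norm{h}_\infty$ (the constant is irrelevant). Fix a finite list $S$ of points of $\sigma$. Because $S$ is finite one may choose a rectangle $R_1$ with $\supp h\subset\interior{R_1}$, $R_1\subset\interior R$, and $\partial R_1\cap S=\emptyset$; call a point of $S$ \emph{hot} if it lies in $R_1$. Then every point where $\tilde h\ne 0$ is hot, and every hot point actually lies in the open rectangle $\interior{R_1}$. Let $S'$ be the sublist of hot points of $S$; it is a list in $\sigma_R$, and $\vf(S')\le\vf(S)$ by Proposition~\ref{vf-lemma}. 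Comparing $\cvar(\tilde h,S)$ with $\cvar(h,S')$ term by term, the contributions agree on pairs of hot points that are consecutive in $S$, while each pair of consecutive hot points separated in $S$ by a nonempty block of non-hot points (an \emph{excursion}) adds at most an extra $2\norm{h}_\infty$ to $\cvar(\tilde h,S)$, and the two end-blocks of $S$ together add at most $2\norm{h}_\infty$. The crucial point is that each excursion produces a distinct type-(1) crossing segment of $S$ on one of the four lines carrying the sides of $R_1$, namely the segment joining the hot point to the first non-hot point of the block: its endpoints are strictly separated by one such line, the hot endpoint being in the open rectangle and the other endpoint lying beyond one of the four lines. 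Distinct excursions give crossing segments with disjoint index intervals, so the number of excursions is at most $4\,\vf(S)$. Hence $\cvar(\tilde h,S)\le\cvar(h,S')+(8\vf(S)+2)\norm{h}_\infty$, and dividing by $\vf(S)\ge 1$ and using $\cvar(h,S')/\vf(S)\le\cvar(h,S')/\vf(S')\le\myvar(h,\sigma_R)$ yields the claimed bound; taking the supremum over $S$ finishes the reduction.

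I expect the excursion count to be the only genuine obstacle; what makes it work is that for a \emph{fixed} finite list $S$ we may nudge the sides of the auxiliary rectangle $R_1$ off the finitely many coordinates occurring in $S$, so that every departure of $\gamma_S$ from $R_1$ becomes a bona fide type-(1) crossing of a definite line, with the universal constant $4$ (one per side) absorbing the rest. Everything else — existence of the cutoff $\varphi$, the identification of $q_n|_\sigma-\tilde g$ as a zero-extension, the Banach-algebra estimate, the term-by-term comparison of curve variations, and closedness of $\AC(\sigma)$ — is routine.
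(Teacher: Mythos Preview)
Your proposal is correct and follows essentially the same route as the paper: multiply a polynomial approximant by a smooth cutoff supported in $\interior R$, observe that the resulting smooth function lies in $\AC(\sigma)$, and then control the $BV(\sigma)$-norm of the difference by bounding the number of ``bad'' edges (those with exactly one endpoint in the rectangle) by $4\,\vf(S)$ via the four side-lines. The paper carries this out with the fixed rectangle $R$ and the index sets $J_1,J_2,J_3$ rather than your per-$S$ nudged rectangle $R_1$ and excursion count, yielding the slightly sharper constant $5$ in place of your $11$, but the idea is identical.
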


\begin{figure}[ht!]
	\begin{center}
		\begin{tikzpicture}[scale=1]
			%
			%
            \draw[fill,blue!10] (2,2.5) circle (0.6cm);
			\draw[blue,thick] (2,2.5) circle (0.6cm);
			\draw[fill,blue!10] (4.5,2) ellipse (0.8cm and 3cm);
			\draw[blue,thick] (4.5,2) ellipse (0.8cm and 3cm);
			\draw[red] (1,-0.5) -- (6,-0.5) -- (6,4.5)  -- (1,4.5) -- (1,-0.5);

			\fill[gray] (2,2.5) circle (0.4cm);
			\draw[black] (2,2.5) circle (0.4cm);
			\fill[gray] (4.5,2.7) ellipse (0.6cm and 0.8cm);
			\draw[black] (4.5,2.7) ellipse (0.6cm and 0.8cm);
			\fill[gray] (4.7,1.2) ellipse (0.2cm and 0.25cm);
			\draw[black] (4.7,1.2) ellipse (0.2cm and 0.25cm);
			
			\draw[black] (1.3,0) node[above] {$R$};
			\draw[black] (3,1.7) node {$\sigma$};
			\draw[black] (3,3.4) node {$\supp g$};
			
			\draw[black] (2.8,1.8) -- (2.3,2.1);
			\draw[black] (3.2,1.7) -- (3.8,1.7);
			
			\draw[black] (2.2,2.7) -- (2.6,3.2);
			\draw[black] (3.6,3.4) -- (4.1,3);
		\end{tikzpicture}
	\end{center}
	\caption{The setting for Lemma~\ref{ext-gen}.}
\end{figure}
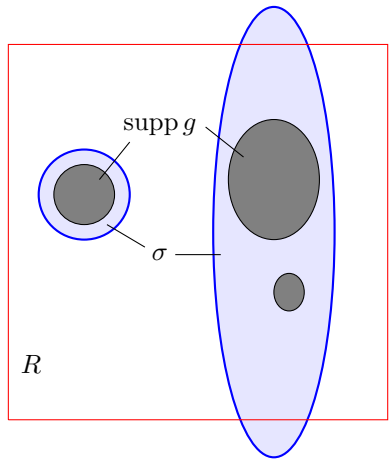


\begin{proof}
	By the affine invariance of absolute continuity, it suffices to consider the case that  $R = [0,1] \times [0,1]$. Suppose that $g\in AC(\sigma_R)$ satisfies supp$(g)\subset\text{int}(R)$.
	Choose a closed square $R_0 = [a,b] \times [a,b]$ (with sides parallel to those of $R$) such that $\supp g \subseteq R_0 \subseteq \interior{R}$.  Let $\chi_1: [0,1] \to [0,1]$ be a $C^\infty$ bump function which is zero at and near the endpoints, 1 on an open neighbourhood of $[a,b]$ and monotonic on $[0,a]$ and $[b,1]$. Let $\chi: R \to [0,1]$, $\chi(x,y) = \chi_1(x) \chi_1(y)$. Then $\chi \in C^\infty(R)$ and, using  Theorem~\ref{ext-1var},
	\begin{align*}
		\norm{\chi}_{\BV(R)}\leq 
                      \norm{\chi_1}_{BV[0,1]}^2 = 9.
	\end{align*}
	
	

	Fix $\varepsilon > 0$.
	Since $g \in \AC(\sigma_R)$, there exists a polynomial $p \in C^\infty(\sigma_R)$ such that $9\norm{g-p}_{\BV(\sigma_R)} < \varepsilon$. Clearly  $\chi p \in C^\infty(\sigma_R)$ and
	\[ \norm{g - \chi p }_{\BV(\sigma_R)} = \norm{\chi(g - p)}_{\BV(\sigma_R)} \le \norm{\chi}_{\BV(\sigma_R)} \norm{g-p}_{\BV(\sigma_R)} < \varepsilon.\]
	Let $\tilde p$ denote the extension of $\chi p$ to $\sigma$ determined by setting
	\[ {\tilde p}(\vecx) = \begin{cases}
		\chi p(\vecx)      & \text{$\vecx \in \sigma_R$,}\\
		0                     & \text{$\vecx \in \sigma \setminus \sigma_R$.}
	\end{cases}
	\]
	Note that $\tilde p \in C^\infty(\sigma) \subseteq \AC(\sigma)$. Let $h = \tilde g -\tilde p$. It will be shown that $\norm{h }_{\BV({\sigma})} < 5 \varepsilon$.
	
	Let $S = [\vecx_0,\vecx_1,\dots,\vecx_m]$ be an ordered list of elements in $\sigma$.
	Partition the indices $1,\dots,m$ into
	\begin{align*}
		J_1 &= \{j \st \hbox{$\vecx_{j-1},\vecx_j \in \sigma_R$}\}, \\
		J_2 &= \{j \st \hbox{$\vecx_{j-1},\vecx_j \not\in \sigma_R$}\}, \\
		J_3 &= \{1,\dots,m\} \setminus (J_1 \cup J_2).
	\end{align*}
Then (interpreting empty sums as being zero)
	\begin{align}\label{sum-of-diffs}
		\sum_{j=1}^m | h(\vecx_j) - h(\vecx_{j-1}) |
		&= \sum_{i=1}^3 \sum_{j\in J_i} | h(\vecx_j) - h(\vecx_{j-1}) |
                          \notag\\
		&\le \sum_{j\in J_1} | h(\vecx_j) - h(\vecx_{j-1}) | + 0
		+ |J_3| \norm{h}_\infty.
	\end{align}
	
	\begin{figure}[ht!]
		\begin{center}
			\begin{tikzpicture}[scale=1]
				%
				%
        \draw[fill,blue!10] (2,2.5) circle (0.6cm);
		\draw[blue,thick] (2,2.5) circle (0.6cm);
		\draw[fill,blue!10] (4.5,2) ellipse (0.8cm and 3cm);
	    \draw[blue,thick] (4.5,2) ellipse (0.8cm and 3cm);
		\draw[red] (1,0) -- (6,0) -- (6,4)  -- (1,4) -- (1,0);
				
				\path node (z0) at (1.7,2.8) [circle,draw,fill=black!50,inner sep=0pt, minimum width=4pt] {}
				node (z1) at (2.3,2.3) [circle,draw,fill=black!50,inner sep=0pt, minimum width=4pt] {}
				node (z2) at (4.3,4.6) [circle,draw,fill=black!50,inner sep=0pt, minimum width=4pt] {}
				node (z3) at (4.7,4.2) [circle,draw,fill=black!50,inner sep=0pt, minimum width=4pt] {}
				node (z4) at (4,3) [circle,draw,fill=black!50,inner sep=0pt, minimum width=4pt] {}
				node (z5) at (5,3) [circle,draw,fill=black!50,inner sep=0pt, minimum width=4pt] {}
				node (z6) at (4.8,-0.4) [circle,draw,fill=black!50,inner sep=0pt, minimum width=4pt] {}
				node (z7) at (4.2,-0.4) [circle,draw,fill=black!50,inner sep=0pt, minimum width=4pt] {}
				node (z8) at (3.9,1) [circle,draw,fill=black!50,inner sep=0pt, minimum width=4pt] {};
				\draw[black,thick] (z0) -- (z1) -- (z2) -- (z3) -- (z4) -- (z5) -- (z6) -- (z7) -- (z8);
				\draw[black] (z0) node[right] {$\vecx_0$};
				\draw[black] (z1) node[left] {$\vecx_1$};
				\draw[black] (z2) node[right] {$\vecx_2$};
				\draw[black] (z3) node[left] {$\vecx_3$};
				\draw[black] (z4) node[below] {$\vecx_4$};
				\draw[black] (z5) node[above] {$\vecx_5$};
				\draw[black] (z6) node[right] {$\vecx_6$};
				\draw[black] (z7) node[left] {$\vecx_7$};
				\draw[black] (z8) node[right] {$\vecx_8$};
				
				\draw[black] (1.3,0) node[above] {$R$};
				\draw[black] (3,1.7) node {$\sigma$};
				
				\draw[black] (2.8,1.8) -- (2.3,2.1);
				\draw[black] (3.2,1.7) -- (3.8,1.7);
			\end{tikzpicture}
		\end{center}
		\caption{In this example $J_1 = \{1,5\}$, $J_2 = \{3,7\}$, $J_3 = \{2,4,6,8\}$ and $S_R = [\vecx_0,\vecx_1,\vecx_4,\vecx_5]$.}
	\end{figure}
	
	Suppose first that $J_1 \ne \emptyset$. Since $\sum_{j\in J_1} |h(\vecx_j)-h(\vecx_{j-1})|$ is not necessarily summing over a list, we will add extra terms so that the resulting sum is over a list. Let $S_R = [\vecx_{j_0},\dots,\vecx_{j_\ell}]$ be the sublist of $S$ obtained by omitting points that are not in $\sigma_R$.
	Note that $S_R$ is a nonempty list of points in $\sigma_R$. Thus
	\begin{align*}
		\sum_{j \in J_1} | h(\vecx_j) - h(\vecx_{j-1}) |
		& \le \sum_{i = 1}^\ell | h(\vecx_{j_i}) - h(\vecx_{j_{i-1}})| = \cvar(h,S_R).
	\end{align*}
	Of course if $J_1 = \emptyset$ then this estimate holds trivially.
	
	We now need to deal with the edges corresponding to $J_3$.
	Note that each element of $J_3$ corresponds to $\gamma_S$ crossing one of the four edge lines of $R$. It follows that $|J_3| \le 4 \vf(S)$. Substituting this into Equation~\ref{sum-of-diffs}  with $\vf(S_R) \le \vf(S)$ (Lemma~\ref{vf-lemma}) gives that
	\clearpage
	\begin{align*}
		 \frac{1}{\vf(S)}\sum_{j=1}^m | h(\vecx_j) - h(\vecx_{j-1}) |
		& \le \frac{\cvar(h,S_R)}{\vf(S_R)} + 4 \norm{h}_\infty \\
		& \le 4 \norm{h}_{\BV(\sigma_R)} \\
		& < 4 \varepsilon.
	\end{align*}
	Taking the supremum over all ordered lists $S$ and adding another $\norm{h}_\infty$ shows that $\norm{h}_{BV(\sigma)} < 5 \varepsilon$. Since $h=\tilde{g}-\tilde{p}$, it follows that $\tilde g \in \AC(\sigma)$.
\end{proof}

\medskip\noindent
\textit{Proof of Theorem \ref{patching-lemma}.}
The forward implication is obvious.

For the reverse implication, for each $\vecx\in\sigma$, choose a compact neighbourhood
$U_\vecx = \mathrm{cl}(V_\vecx) \cap \sigma$ such that $f|U_\vecx \in \AC(U_\vecx)$. One may, by taking a further restriction, assume that each $V_\vecx$ is a rectangle.

By compactness  we can choose a finite open subcover $V_1,\dots,V_m$
of $\sigma$. Now choose $C^\infty$ functions $\chi_1,\dots,\chi_m:
\mR^2 \to [0,1]$ such that $\supp \chi_j \subseteq V_j$ for each $j$
and $\sum_{j=1}^m \chi_j = 1$ on $\sigma$. Let $f_j = \chi_j f$ so that $f_j|_{U_{\vecx}}\in AC(U_\vecx)$. Then $\supp f_j \subseteq V_j$ and hence, by
Lemma~\ref{ext-gen},
$f_j$ lies in $\AC(\sigma)$.
Since  $\sum_{j=1}^m
 f_j = f$ we have $f \in \AC(\sigma)$.
\hfill\qed

\begin{cor}\label{disjoint}
	Suppose that $\sigma_1$ and $\sigma_2$ are disjoint nonempty compact sets in the plane, that $\sigma = \sigma_1 \cup \sigma_2$ and that  $f: \sigma \to \mC$. If $f|\sigma_1 \in \AC(\sigma_1)$ and $f|\sigma_2 \in \AC(\sigma_2)$ then $f \in \AC(\sigma)$.
\end{cor}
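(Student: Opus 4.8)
\textbf{Proof of Corollary \ref{disjoint}.}
The plan is to deduce this directly from the Patching Lemma (Theorem~\ref{patching-lemma}), the only real point being to observe that the disjointness of two compact sets gives a positive separation, so that a sufficiently small compact neighbourhood of any point of $\sigma$ meets only one of the two pieces.

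First I would set $d = \dist(\sigma_1,\sigma_2)$. Since $\sigma_1$ and $\sigma_2$ are disjoint nonempty compact sets, $d > 0$. Fix $\vecx \in \sigma$; without loss of generality $\vecx \in \sigma_1$. Let $V_\vecx$ be the open ball of radius $d/2$ centred at $\vecx$, and put $U_\vecx = \overline{V_\vecx} \cap \sigma$. Then $U_\vecx$ is a compact neighbourhood of $\vecx$ in $\sigma$, and since every point of $U_\vecx$ lies within distance $d/2 < d$ of $\vecx \in \sigma_1$, we have $U_\vecx \cap \sigma_2 = \emptyset$, i.e.\ $U_\vecx \subseteq \sigma_1$. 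By hypothesis $f|\sigma_1 \in \AC(\sigma_1)$, so Theorem~\ref{ac-restrict} gives $f|U_\vecx = (f|\sigma_1)|U_\vecx \in \AC(U_\vecx)$. The case $\vecx \in \sigma_2$ is identical with the roles of $\sigma_1$ and $\sigma_2$ interchanged.

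Thus every point of $\sigma$ admits a compact neighbourhood on which $f$ is absolutely continuous, and the Patching Lemma yields $f \in \AC(\sigma)$. There is no real obstacle here beyond noting that the positive separation $d > 0$ is exactly what makes the neighbourhoods $U_\vecx$ land inside a single piece; the substantive work has already been done in Theorem~\ref{patching-lemma}.
\hfill\qed
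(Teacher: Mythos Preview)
Your proof is correct and is exactly the intended argument: the paper states this corollary immediately after the Patching Lemma without an explicit proof, and your verification that disjoint compact sets have positive separation---so that small compact neighbourhoods lie entirely in one piece, whence Theorem~\ref{ac-restrict} and Theorem~\ref{patching-lemma} apply---is precisely what is needed.
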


\section{Additional Properties of $AC(\sigma)$}

The following collection of results are useful in spectral theory contexts, such as the theory of $\mathfrak{U}$-spectral operators in \cite{CF}. To apply that theory, one would like to know that $\AC(\sigma)$ is topologically admissible and inverse closed.

\begin{defn}
	Let $\Omega$ be a nonempty closed subset of the complex plane. An algebra $\mathfrak{U}$ of complex-valued functions defined on $\Omega$ is \textbf{normal} if for every finite open cover $\{U_i\}_{i=1}^n$ of $\Omega$, there exists $\{f_i\}_{i=1}^n\subseteq \mathfrak{U}$ such that for all $i$
	\begin{enumerate}
		\item $f_i(\Omega) \subseteq [0,1]$;
		\item $\supp(f_i) \subseteq U_i$;
		\item $\sum_{i=1}^n f_i = 1$ on $\Omega$.
	\end{enumerate}
	$\mathfrak{U}$ is \textbf{admissible} if it is normal and
	\begin{enumerate}
		\item the constant function 1 and the identity function $\zeta$ are in $\mathfrak{U}$;
		\item for every $f\in\mathfrak{U}$ and every $\xi\notin\supp(f)$, the function $f_\xi$ defined by
		\begin{align*}
			f_\xi(z) = \begin{cases}
				\frac{f(z)}{\xi - z} & \text{ if $ z\neq\xi$},\\
				0 & \text{ if $z=\xi$}
			\end{cases}
		\end{align*}
		is in $\mathfrak{U}$.
	\end{enumerate}
	If $\mathfrak{U}$ is a subalgebra of $C(\Omega)$, then $\mathfrak{U}$ is \textbf{topologically admissible} if it is admissible and
	\begin{enumerate}
		\item $\mathfrak{U}$ is equipped with a locally convex topology $\tau$ such that whenever $(f_n)_{n\in\mN}$ is a Cauchy sequence in $\tau$ and converges pointwise to 0, then $f_n\to0$ in $\tau$;
		\item for every $f\in \AC(\sigma)$, the map $\xi\mapsto f_\xi$ is continuous on $\mC\backslash\supp(f)$.
	\end{enumerate}
	
	Lastly, $\mathfrak{U}$ is \textbf{inverse closed} if whenever $f\in\mathfrak{U}$ and $|f|>0$, then $\frac{1}{f}\in\mathfrak{U}$.
\end{defn}

\begin{rem}
For $\xi\notin\supp(f)$, it may be helpful to think of $f_\xi$ being defined by
\begin{align*}
			f_\xi(z) = \begin{cases}
				\frac{f(z)}{\xi - z} & \text{ if $z\in\supp(f)$},\\
				0 & \text{ if $z\notin\supp(f)$}.
			\end{cases}
		\end{align*}
\end{rem}

\begin{prop}\label{inv-closed}
	$\AC(\sigma)$ is inverse closed.
\end{prop}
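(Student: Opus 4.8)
The plan is to show that if $f \in AC(\sigma)$ with $|f| > 0$ on $\sigma$, then $1/f \in AC(\sigma)$, using the Patching Lemma (Theorem~\ref{patching-lemma}) together with the polynomial-density definition of $AC(\sigma)$. First I would observe that it suffices, by the Patching Lemma, to show that $1/f$ is locally absolutely continuous: for each $\vecx \in \sigma$ I need to produce a compact neighbourhood $U_\vecx$ of $\vecx$ on which $(1/f)|U_\vecx \in AC(U_\vecx)$. By Theorem~\ref{ac-restrict}, $f|U_\vecx \in AC(U_\vecx)$ for any compact neighbourhood $U_\vecx$, so the local problem reduces to inverting an absolutely continuous function on a small set. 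The point of localising is that on a sufficiently small neighbourhood $U_\vecx$, $f$ is close in $BV$-norm (indeed in sup-norm) to the nonzero constant $f(\vecx)$, which makes a Neumann-series argument feasible.

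More precisely, fix $\vecx \in \sigma$ and set $c = f(\vecx) \ne 0$. Since $f \in AC(\sigma)$ it is continuous, and $AC(\sigma)$-elements have the property that their local variation can be made small: I would first choose a compact neighbourhood $U = U_\vecx$ of $\vecx$ with $\myvar(f, U)$ small and $\norm{f - c}_{\infty, U}$ small --- the latter from continuity, and the former is where some care is needed (see below). Write $f|U = c(1 - h)$ where $h = 1 - f|U/c$, so $\norm{h}_{BV(U)} = \norm{f - c}_{\infty,U}/|c| + \myvar(f,U)/|c|$. If $U$ is chosen so that this is $< 1$, then $1 - h$ is invertible in the Banach algebra $BV(U)$ via the Neumann series $\sum_{k=0}^\infty h^k$, which converges in $BV(U)$-norm; moreover since $h \in AC(U)$ (as $f|U \in AC(U)$ and $AC(U)$ is a closed subalgebra containing the constants), each partial sum lies in $AC(U)$, and hence the limit $(1-h)^{-1}$ lies in $AC(U)$. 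Therefore $(1/f)|U = c^{-1}(1-h)^{-1} \in AC(U)$. Applying the Patching Lemma gives $1/f \in AC(\sigma)$.

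The main obstacle is the claim that one can choose the compact neighbourhood $U_\vecx$ so that $\myvar(f|U_\vecx, U_\vecx)$ is small. Continuity of $f$ controls $\norm{f - c}_\infty$ on a small neighbourhood, but it does not by itself control the variation --- a continuous $BV$ function can have substantial variation on arbitrarily small sets. This is exactly the phenomenon captured (for functions vanishing somewhere) by the Conjecture stated earlier in the paper, and its absence in general is the reason inverse-closedness is nontrivial. The resolution is to use that $f \in AC(\sigma)$, not merely $f \in BV(\sigma)$: approximate $f$ by a polynomial $p$ with $\norm{f - p}_{BV(\sigma)} < \delta$, and then note that a polynomial, being $C^1$ (indeed $C^2$, hence Lipschitz on compacta by Theorem~\ref{Lip-fns}), has $\myvar(p, U) \le L_\sigma(p)\, C_U \to 0$ as the diameter of $U$ shrinks, since $C_U \le \diam{U} \cdot(\text{const})$ by \eqref{Vsigma-bounds}. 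Combining $\myvar(f,U) \le \myvar(f - p, U) + \myvar(p, U) < \delta + L_\sigma(p)\diam{U}$ with the analogous sup-norm estimate, one can make $\norm{f - c}_{BV(U)}$ as small as desired by first choosing $\delta$ small and then shrinking $U$. This makes the Neumann-series step rigorous and completes the argument.
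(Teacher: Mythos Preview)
Your proposal is correct and takes a genuinely different route from the paper's proof.

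The paper argues directly and globally: choose polynomials $p_n \to f$ in $BV(\sigma)$, note that eventually $|p_n| > \delta/2$ so that $1/p_n \in C^\infty(\sigma) \subseteq AC(\sigma)$, and then verify by an explicit (and somewhat laborious) term-by-term estimate that $\norm{1/p_n - 1/f}_{BV(\sigma)} \to 0$, expanding $\tfrac{1}{p_n} - \tfrac{1}{f} = \tfrac{f - p_n}{p_n f}$ and bounding the curve variation over an arbitrary list.

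Your approach is more structural: localise via the Patching Lemma and then invert by a Neumann series in the Banach algebra $BV(U)$, using that $AC(U)$ is a closed subalgebra. The step that does the real work is your observation that $\myvar(f,U) \to 0$ as $\diam{U} \to 0$ for any $f \in AC(\sigma)$, proved by sandwiching between a polynomial approximant (whose variation on $U$ is at most $L_\sigma(p)\,C_U \le 2L_\sigma(p)\diam{U}$ by Theorem~\ref{Lip-fns} and \eqref{Vsigma-bounds}) and a global $BV$-error. This auxiliary fact is of independent interest---it is a pointwise analogue of the open Conjecture stated at the end of \S\ref{subalgebras}---and your argument makes transparent exactly where absolute continuity (rather than mere bounded variation) is used. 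The paper's proof, by contrast, is self-contained and avoids appealing to the Patching Lemma, at the cost of a heavier computation.
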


\begin{proof}
	Suppose that $f\in \AC(\sigma)$ satisfies $|f|>0$, so that $\delta:=\min\limits_{z\in\sigma}|f(z)|>0$. Let $(p_n)_{n\in\mN}$ be a sequence of polynomials converging to $f$ in $BV(\sigma)$. Without loss, we may assume that $2|p_n|>\delta$, and hence $\frac{1}{p_n}\in C^\infty(\sigma)$, for all $n$. Firstly, we have that
	\begin{align*}
		\norm{\tfrac{1}{p_n}-\tfrac{1}{f}}_\infty = \norm{\tfrac{p_n-f}{p_nf}}_\infty \leq \tfrac{2}{\delta^2}\|p_n-f\|_\infty \to 0.
	\end{align*}
Also, the quantities $\norm{p_n}_\infty$ and $\myvar(p_n,\sigma)$ are uniformly bounded in $n$.
	Now given a list of points in $\sigma$, say $S=[z_j]_{j=0}^m$, we have that
	\begin{align*}
  & \; \frac{1}{\vf(S)}  \cvar\left(\tfrac{p_n-f}{p_nf},S\right)\\
	&\leq  \frac{4}{\delta^4\vf(S)} \sum_{j=1}^m\Big| p_n(z_{j-1})f(z_{j-1})\big[ p_n(z_j)-f(z_j) \big]\\*
	& \hskip 3cm
        - p_n(z_{j})f(z_j)\big[ p_n(z_{j-1}) - f(z_{j-1}) \big] \Big|\\
	& = 	\frac{4}{\delta^4\vf(S)} \sum_{j=1}^m \Big| f(z_{j-1})\big[ p_n(z_{j-1})-p_n(z_j) \big] \big[ p_n(z_j)-f(z_j) \big]\\*
	& \hskip 2.5cm
         + p_n(z_j)\big[ f(z_{j-1})-f(z_j) \big] \big[ p_n(z_{j-1})-f(z_{j-1}) \big]\\*
	&\hskip 2.5cm \qquad  + p_n(z_j)f(z_{j-1})\big[ p_n(z_j)-f(z_j)-p_n(z_{j-1})+f(z_{j-1}) \big] \Big|\\
	&\leq \frac{4}{\delta^4}\Big( \norm{f}_\infty \norm{p_n - f}_\infty \myvar(p_n,\sigma)
                     + \norm{p_n}_\infty \norm{p_n-f}_\infty \myvar(f,\sigma) \\*
	&\hskip 5.5cm
                         +\norm{p_n}_\infty \norm{f}_\infty \myvar(p_n-f,\sigma) \Bigr) \\*
 	&	\to 0.
	\end{align*}
	Thus $\frac{1}{f}\in \AC(\sigma)$.
\end{proof}

From this result we have some simple corollaries.

\begin{cor}
Suppose that $f,g \in \AC(\sigma)$ and that $g \ne 0$ on $\sigma$. Then $\frac{f}{g} \in \AC(\sigma)$.
\end{cor}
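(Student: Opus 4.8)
The plan is to derive this immediately from the inverse closedness of $\AC(\sigma)$ established in Proposition~\ref{inv-closed}, together with the fact that $\AC(\sigma)$ is an algebra.

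First I would note that since $g$ is continuous (being in $\AC(\sigma) \subseteq C(\sigma)$) and nonvanishing on the compact set $\sigma$, the quantity $\min_{z \in \sigma} |g(z)|$ is strictly positive; in other words $|g| > 0$ in the sense used in Proposition~\ref{inv-closed}. Applying that proposition gives $\tfrac{1}{g} \in \AC(\sigma)$. Since $\AC(\sigma)$ is a Banach algebra (it is by definition the closure of $\Pol$ in $BV(\sigma)$, which is itself a Banach algebra, so $\AC(\sigma)$ is a closed subalgebra), the product $\tfrac{f}{g} = f \cdot \tfrac{1}{g}$ again lies in $\AC(\sigma)$.

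There is essentially no obstacle here: the only point requiring a word of justification is the passage from ``$g \ne 0$ on $\sigma$'' to ``$|f| > 0$'' in the hypothesis of Proposition~\ref{inv-closed}, and this is just compactness plus continuity. The entire content of the result is carried by Proposition~\ref{inv-closed}; the corollary is a one-line consequence.
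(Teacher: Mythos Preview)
Your proposal is correct and matches the paper's own treatment: the corollary is stated without proof immediately after Proposition~\ref{inv-closed}, so the intended argument is exactly the one you give (inverse closedness plus the algebra property). The only slip is a typo --- you wrote ``$|f| > 0$'' where you meant ``$|g| > 0$'' --- but the reasoning is sound.
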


\begin{cor}
	If $\lambda\in\sigma$ is an isolated point, then the function $g_\lambda:\sigma\to\mC$ defined by
	\begin{align*}
		g_\lambda(z) = \begin{cases}
			0 & \qquad \text{if } z=\lambda,\\
			(z-\lambda)^{-1} &\qquad \text{if } z\neq\lambda,
		\end{cases}
	\end{align*}
	is in $AC(\sigma)$.
\end{cor}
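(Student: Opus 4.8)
The plan is to write $\sigma$ as a disjoint union of two compact pieces on which $g_\lambda$ is manifestly absolutely continuous, and then invoke Corollary~\ref{disjoint}. First, if $\sigma = \{\lambda\}$ then $g_\lambda \equiv 0$ is a polynomial and there is nothing to prove, so I would assume $\sigma$ has at least two points. Since $\lambda$ is an isolated point of $\sigma$, the singleton $\sigma_1 = \{\lambda\}$ is relatively open in $\sigma$, so $\sigma_2 = \sigma \setminus \{\lambda\}$ is relatively closed, hence compact; it is nonempty and disjoint from $\sigma_1$, and $\sigma = \sigma_1 \cup \sigma_2$. Note that $\dist(\lambda,\sigma_2) > 0$, since $\sigma_2$ is compact and does not contain $\lambda$.

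Next I would examine the two restrictions. On $\sigma_1$ the function $g_\lambda|\sigma_1$ is identically zero, hence trivially in $\AC(\sigma_1)$. On $\sigma_2$, the function $q(z) = z - \lambda$ is (up to an additive constant) the identity function, so $q \in \Pol(\sigma_2) \subseteq \AC(\sigma_2)$, and $q$ does not vanish anywhere on $\sigma_2$. Applying the preceding corollary (with $f$ the constant function $1$ and $g = q$, now on the compact set $\sigma_2$ in place of $\sigma$) gives that $g_\lambda|\sigma_2 = 1/q \in \AC(\sigma_2)$.

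Finally, since $\sigma_1$ and $\sigma_2$ are disjoint nonempty compact sets with $\sigma = \sigma_1 \cup \sigma_2$, and $g_\lambda|\sigma_1 \in \AC(\sigma_1)$ while $g_\lambda|\sigma_2 \in \AC(\sigma_2)$, Corollary~\ref{disjoint} immediately yields $g_\lambda \in \AC(\sigma)$. There is no substantive obstacle here beyond bookkeeping: one just has to dispose of the trivial singleton case and observe that the isolatedness of $\lambda$ makes $\sigma_2$ compact. All the real content is already contained in the inverse-closedness result (Proposition~\ref{inv-closed}) and its quotient corollary, together with the patching machinery behind Corollary~\ref{disjoint}.
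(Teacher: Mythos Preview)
Your argument is correct, but it takes a different route from the paper's. You decompose the domain as $\sigma = \{\lambda\} \cup (\sigma\setminus\{\lambda\})$, verify that the restrictions of $g_\lambda$ to each piece are absolutely continuous (trivially on the singleton, and via inverse-closedness of $AC(\sigma_2)$ on the complement), and then invoke Corollary~\ref{disjoint} to patch. The paper instead works entirely inside $AC(\sigma)$: since $\lambda$ is isolated, the characteristic function $\chi_{\{\lambda\}}$ lies in $AC(\sigma)$ (it agrees on $\sigma$ with a $C^\infty$ bump function), so $f_\lambda := \zeta - \lambda + \chi_{\{\lambda\}}$ is in $AC(\sigma)$ and is nowhere zero; Proposition~\ref{inv-closed} then gives $1/f_\lambda \in AC(\sigma)$, and one checks that $g_\lambda = 1/f_\lambda - \chi_{\{\lambda\}}$.

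Both arguments rest on Proposition~\ref{inv-closed}. Yours is perhaps more transparent and leans on the patching machinery already developed in Section~4; the paper's is a short algebraic trick that avoids splitting the domain and needs only the (immediate) fact that $\chi_{\{\lambda\}}\in C^\infty(\sigma)\subseteq AC(\sigma)$ for isolated $\lambda$. Neither approach has any real advantage over the other here.
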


\begin{proof}
	Since $\lambda\in\sigma$ is isolated, then $\chi_{\{\lambda\}}$ is in $\AC(\sigma)$. Again denoting the identity function on $\sigma$ by $\zeta$, the function $f_\lambda=\zeta-\lambda + \chi_{\{\lambda\}}$ is in $\AC(\sigma)$ and satisfies $|f_\lambda|>0$. By Proposition~\ref{inv-closed}, $\frac{1}{f_\lambda}\in \AC(\sigma)$, and so $g_\lambda = \frac{1}{f_\lambda}-\chi_{\{\lambda\}} \in AC(\sigma)$.
\end{proof}

\begin{thm}
	If $\sigma$ is a nonempty compact subset of $\mR^2$, then $\AC(\sigma)$ is topologically admissible and inverse closed.
\end{thm}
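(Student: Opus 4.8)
The plan is to verify the three properties in the definition of topological admissibility---normality, admissibility, and the continuity conditions on $\xi \mapsto f_\xi$---together with inverse closedness, which has already been established in Proposition~\ref{inv-closed}. Most of the pieces are already available, so this is largely an assembly job, with one genuinely substantive estimate to carry out.

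First, normality of $\AC(\sigma)$ is essentially the content of the Patching Lemma's proof: given a finite open cover $\{U_i\}$ of $\sigma$ (extended to an open cover of a neighbourhood if necessary, so that we may find rectangles and a $C^\infty$ partition of unity), choose $C^\infty$ bump functions $\chi_i : \mR^2 \to [0,1]$ with $\supp \chi_i \subseteq U_i$ and $\sum_i \chi_i = 1$ on $\sigma$. Since each $\chi_i \in C^\infty(\sigma) \subseteq \AC(\sigma)$ by the corollary following Theorem~\ref{C2-rect}, the restrictions $f_i = \chi_i|\sigma$ are the required functions. Next, for admissibility we note $1, \zeta \in \Pol \subseteq \AC(\sigma)$, and for $f \in \AC(\sigma)$ and $\xi \notin \supp(f)$ the function $f_\xi$ is $f$ multiplied by $g(z) = (\xi - z)^{-1}$ on a neighbourhood of $\supp(f)$ and $0$ elsewhere; since $\dist(\xi, \supp f) > 0$, the function $z \mapsto (\xi-z)^{-1}$ is $C^\infty$ on a neighbourhood of $\supp f$, and one can splice it with $0$ off $\supp f$ using a bump function as in Lemma~\ref{ext-gen} to realise $f_\xi$ as a product of $f$ with an $\AC(\sigma)$ function (alternatively, write $f_\xi = (\zeta - \xi + \chi)^{-1}$-type expressions locally and invoke Proposition~\ref{inv-closed}); either way $f_\xi \in \AC(\sigma)$.

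For topological admissibility we equip $\AC(\sigma)$ with its norm topology $\tau$ coming from $\|\cdot\|_{BV(\sigma)}$. The first condition is immediate: a Cauchy sequence in $\|\cdot\|_{BV(\sigma)}$ converges to some $g \in \AC(\sigma)$, and since $BV$-convergence implies uniform convergence, pointwise convergence to $0$ forces $g = 0$. The remaining, and main, task is the continuity of $\xi \mapsto f_\xi$ from $\mC \setminus \supp(f)$ into $(\AC(\sigma), \|\cdot\|_{BV(\sigma)})$. Fix $\xi_0 \notin \supp(f)$ and write $d = \dist(\xi_0, \supp f) > 0$; for $\xi$ with $|\xi - \xi_0| < d/2$ one has $f_\xi - f_{\xi_0} = f \cdot h_\xi$ where $h_\xi(z) = (\xi-z)^{-1} - (\xi_0-z)^{-1} = (\xi_0 - \xi)(\xi-z)^{-1}(\xi_0-z)^{-1}$ on $\supp f$ and $0$ off $\supp f$. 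Using Lemma~\ref{lem:bvc:5540}(2) one estimates $\myvar(f h_\xi, \sigma) \le \|f\|_\infty \myvar(h_\xi,\sigma) + \|h_\xi\|_\infty \myvar(f,\sigma)$, so it suffices to bound $\|h_\xi\|_\infty$ and $\myvar(h_\xi,\sigma)$ by $O(|\xi - \xi_0|)$. The sup norm bound is clear. For the variation, since $h_\xi$ is supported inside a fixed compact set at distance $\ge d/2$ from $\xi,\xi_0$, one covers $\supp f$ by a rectangle $R \subset \interior{R'}$ avoiding $\xi,\xi_0$, notes $h_\xi = \tilde{k}_\xi$ with $k_\xi = (\xi_0-\xi)(\xi-\cdot)^{-1}(\xi_0-\cdot)^{-1}$ smooth on $R'$, and applies the extension estimate in the spirit of Lemma~\ref{ext-gen} (with the explicit constant $5$ appearing there, or its analogue) together with the Lipschitz/$C^2$ bounds of Theorems~\ref{Lip-fns} and~\ref{C2-rect}: on $R'$ the function $k_\xi$ has $C^2$ norm (and hence $BV(\sigma_{R'})$ norm) bounded by $C(d)\,|\xi - \xi_0|$, so $\myvar(h_\xi, \sigma) \le C'(d)\,|\xi-\xi_0|$.

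I expect the continuity of $\xi \mapsto f_\xi$ to be the main obstacle, precisely because it requires controlling the variation norm---not merely the sup norm---of the difference $f_\xi - f_{\xi_0}$, and the variation norm interacts with multiplication only through the submultiplicative-type inequality of Lemma~\ref{lem:bvc:5540}(2); one must therefore produce a genuine $O(|\xi-\xi_0|)$ bound on $\myvar(h_\xi,\sigma)$, and the cleanest route is to route the smooth, compactly supported function $h_\xi$ through the Lipschitz bound of Theorem~\ref{Lip-fns} after a bump-function splicing as in Lemma~\ref{ext-gen}, keeping careful track that all constants depend only on $d = \dist(\xi_0,\supp f)$ and the fixed data $\|f\|_{BV(\sigma)}$. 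Everything else---normality, the algebraic admissibility conditions, the first topological condition, and inverse closedness---follows directly from results already in hand (the Patching Lemma machinery, $\Pol \subseteq C^\infty(\sigma) \subseteq \AC(\sigma)$, completeness of $BV(\sigma)$, and Proposition~\ref{inv-closed}).
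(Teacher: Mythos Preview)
Your proposal is correct, and for normality, admissibility, the first topological condition, and inverse closedness it matches the paper's proof essentially verbatim. For the continuity of $\xi \mapsto f_\xi$ you take a genuinely different route. The paper estimates $\cvar(f_\xi - f_\eta,S)/\vf(S)$ directly: it partitions the indices of a list $S$ into sets $J_1,J_2,J_3$ according to whether $z_{j-1},z_j$ lie in $\supp f$, then performs an explicit algebraic expansion of the resolvent-difference terms, eventually bounding everything by quantities of size $O(|\xi-\eta|)$ with constants depending on $d_\xi$, $\|f\|_{BV(\sigma)}$, $\|\zeta\|_{BV(\sigma)}$ and $\|s\|_{BV(\sigma)}$. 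Your approach instead factors $f_\xi - f_{\xi_0} = f\cdot \hat h_\xi$ with $\hat h_\xi$ a globally smooth function of Lipschitz norm $O(|\xi-\xi_0|)$, then invokes submultiplicativity and Theorem~\ref{Lip-fns}. This is shorter and more conceptual; the paper's bare-hands computation avoids appealing to the $\Lip\hookrightarrow BV$ embedding but at the cost of considerably more algebra.

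One small point to tighten: as written, your $h_\xi$ (zero off $\supp f$) is \emph{not} equal to the extension $\tilde k_\xi$ of $k_\xi$ from a rectangle, since $\tilde k_\xi$ would be nonzero on $\sigma_R\setminus\supp f$. The bump-function splicing you mention in the next sentence is exactly the fix: choose $\chi\in C^\infty(\mR^2)$ with $\chi\equiv 1$ on $\supp f$ and $\supp\chi$ contained in a $d/4$-neighbourhood of $\supp f$, set $\hat h_\xi = \chi k_\xi$, and observe $f\cdot h_\xi = f\cdot\hat h_\xi$ since $f$ vanishes where they differ. Then $\hat h_\xi\in C^\infty(\sigma)$ with $L_\sigma(\hat h_\xi) = O(|\xi-\xi_0|)$ (the derivatives of $k_\xi$ on $\supp\chi$ are $O(d^{-3})$, and $\|k_\xi\|_\infty = O(d^{-2})$, both multiplied by $|\xi-\xi_0|$), so Theorem~\ref{Lip-fns} gives $\|\hat h_\xi\|_{BV(\sigma)} = O(|\xi-\xi_0|)$ and the argument closes. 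Make the replacement $h_\xi\to\hat h_\xi$ explicit and you are done.
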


\begin{proof}
	Firstly, $\AC(\sigma)$ is normal since $C^\infty(\sigma)\subseteq \AC(\sigma)$ is normal, and we have already seen that $\AC(\sigma)$ is inverse closed, so all that remains is to show that $\AC(\sigma)$ is topologically admissible. So suppose that $f\in \AC(\sigma)$ and $\xi \in \mC\backslash\supp(f)$. Then there is an open set $U\supseteq \supp(f)$ disjoint from some neighbourhood of $\xi$ and a $g\in C^\infty(\mR^2)$ such that $g|_{\supp(f)}=1$ and $g|_{U^c}=0$. Then $g_\xi$ defined by
	\begin{align*}
		g_\xi(z) = \begin{cases}
			0 & \qquad \text{ if } z\in U^c,\\
			\frac{g(z)}{\xi -z} &\qquad \text{ if } z\in U
		\end{cases}
	\end{align*}
	is in $C^\infty(\mR^2)$ and $fg_\xi = f_\xi$ on $\sigma$. Thus $f_\xi\in \AC(\sigma)$ and $\AC(\sigma)$ is admissible.
	
	As $\AC(\sigma)$ is complete and the norm topology on $\AC(\sigma)$ is stronger than the topology of pointwise convergence, any Cauchy sequence that converges pointwise to 0 must converge to 0 in $\|\cdot\|_{BV(\sigma)}$. So it remains to check that $\xi\mapsto f_\xi$ is continuous on $\mC\backslash\supp(f)$ for all $f\in \AC(\sigma)$. Of course this is true when $f=0$, so we assume that $f \neq 0$ from now.
	
	We will denote by $\zeta$ the identity function on $\sigma$, by $M_\sigma$ the maximum of $1$ and $\|\zeta\|_{BV(\sigma)}$, and by $s$ the square function $s(z)=z^2$. Let $ \varepsilon>0$ and $\xi\in\mC\backslash\supp(f)$. For $w\in\mC$, define $d_w = \inf\limits_{z\in\supp(f)}|w-z|$ and let $\eta\in \mC\backslash\supp(f)$ satisfy
	\begin{align*}
		|\xi-\eta|< \min \left\{ 1, \frac{ d_\xi}{2},  \frac{d_\xi^2\varepsilon}{4 \|f\|_{BV(\sigma)}},  \frac{d_\xi^4\varepsilon}{8 \norm{f}_{BV(\sigma)} \big( (|\xi|^2 + 3|\xi|+1) M_\sigma + \|s\|_{BV(\sigma)} \big)} \right\}.
	\end{align*}
	Noting that $2d_\eta \geq d_\xi$ (as $2|\xi-\eta| < d_\xi$), we have that
	\begin{align} \label{sup-est}
		\|f_\xi-f_\eta\|_\infty \leq \|f\|_\infty \sup_{z\in\supp(f)}\left| \frac{\xi-\eta}{(\xi-z)(\eta-z)} \right| \leq \frac{2\|f\|_\infty |\xi-\eta|}{d_\xi^2}.
	\end{align}
	Moreover, suppose $S=[z_j]_{j=0}^n$ is a finite sequence in $\sigma$ and define
	\begin{align*}
		J_1 &	= \{j\,:\, z_j\in\supp(f),z_{j-1}\notin\supp(f) \}\\
		J_2 &	= \{j\,:\, z_{j-1}\in\supp(f),z_j\notin\supp(f) \}\\
		J_3 &	= \{1,2,\dots,n\}\backslash(J_1\cup J_2).
	\end{align*}
Then we have that
\begin{align*}
		&\frac{\cvar(f_\xi - f_\eta , S)}{\vf(S)} \\
 =\, & \frac{1}{\vf(S)} \sum_{j\in J_1}\left|\frac{ f(z_j) (\xi-\eta) }{ (\xi-z_j) (\eta-z_j) }\right| + \frac{1}{\vf(S)} \sum_{j\in J_2}\left|\frac{ f(z_{j-1}) (\xi-\eta) }{ (\xi-z_{j-1}) (\eta-z_{j-1}) }\right| \\
		& \hspace*{.5cm} +\frac{1}{\vf(S)} \sum_{j\in J_3}\left| f(z_j)\left[ \frac{\xi-\eta}{(\xi-z_j)(\eta-z_j)}\right] - f(z_{j-1}) \left[ \frac{\xi-\eta}{(\xi-z_{j-1})(\eta-z_{j-1})} \right] \right|\\
\, =& \frac{|\xi-\eta|}{\vf(S)} \sum_{j\in J_1}\left|\frac{ f(z_j) - f(z_{j-1}) }{ (\xi-z_j) (\eta-z_j) }\right| + \frac{|\xi-\eta|}{\vf(S)} \sum_{j\in J_2}\left|\frac{ f(z_j)-f(z_{j-1}) }{ (\xi-z_{j-1}) (\eta-z_{j-1}) }\right|\\
		&\hspace*{.5cm}+  \frac{|\xi-\eta|}{\vf(S)} \sum_{j\in J_3} \left| \frac{ f(z_j)(\xi-z_{j-1})(\eta-z_{j-1}) - f(z_{j-1}) (\xi-z_j)(\eta-z_j) }{ (\xi-z_j)(\eta-z_j)(\xi-z_{j-1})(\eta-z_{j-1}) } \right|\\
\, \leq &	\frac{2|\xi-\eta|\myvar(f,\sigma)}{d_\xi^2} + \frac{4|\xi-\eta|}{d_\xi^4\vf(S)} \sum_{j\in J_3} \Big| \xi\eta\big[ f(z_j)-f(z_{j-1}) \big] \\
		&\hspace*{.5cm} - (\xi+\eta)\big[ z_{j-1}f(z_j) - z_jf(z_{j-1}) \big] + z_{j-1}^2f(z_j) - z_j^2f(z_{j-1})  \Big|\\
		\leq \, & \frac{4|\xi-\eta|}{d_\xi^4} \Big( |\xi\eta|\myvar(f,\sigma)  + |\xi+\eta| \big( \|\zeta\|_{\BV(\sigma)}  \|f\|_{\BV(\sigma)} \big) + \|s\|_{\BV(\sigma)}\|f\|_{\BV(\sigma)}\Big) \\
		&\hspace*{.5cm} + \frac{2|\xi-\eta|\myvar(f,\sigma)}{d_\xi^2}.
\end{align*}
Recalling that $|\xi-\eta|<1$, we then have that
\begin{align*}
		&\frac{\cvar(f_\xi - f_\eta , S)}{\vf(S)}< \, \frac{4 |\xi-\eta|}{d_\xi^4} \norm{f}_{BV(\sigma)} \left( |\xi|^2 + |\xi| + \left( 2|\xi|+1 \right) \|\zeta\|_{BV(\sigma)} + \|s\|_{BV(\sigma)} \right)\\
		&\hspace*{3.5cm} + \frac{2|\xi-\eta|\myvar(f,\sigma)}{d_\xi^2} \\
		& \qquad \leq\frac{4|\xi-\eta|}{d_\xi^4} \norm{f}_{BV(\sigma)}\big( (|\xi|^2+3|\xi|+1) M_\sigma + \|s\|_{BV(\sigma)} \big) + \frac{2|\xi-\eta|\myvar(f,\sigma)}{d_\xi^2} \\
		& \qquad <  \frac{\varepsilon}{2} + \frac{2|\xi-\eta|\myvar(f,\sigma)}{d_\xi^2}.
	\end{align*}
	Taking the supremum over all $S$ then yields
 	\begin{align*}
		\myvar(f_\xi-f_\eta,\sigma) \leq \frac{\varepsilon}{2} + \frac{2|\xi-\eta|\myvar(f,\sigma)}{d_\xi^2}.
	\end{align*}
	Combining this with \eqref{sup-est} we have that
	\begin{align*}
		\|f_\xi-f_\eta\|_{BV(\sigma)} \leq \frac{\varepsilon}{2} + \frac{2|\xi-\eta| \normbv{f}}{d_\xi^2} < \varepsilon.
	\end{align*}
	So $\xi\mapsto f_\xi$ is continuous on $\mC\backslash\supp(f)$ for all $f\in \AC(\sigma)$ and thus $\AC(\sigma)$ is topologically admissible.
\end{proof}

\section*{Acknowledgements}
The work of the third author was supported by the Research Training Program of the Department of Education and Training of the Australian Government.

%
%
\bibliographystyle{amsalpha}

\begin{thebibliography}{18}


\bibitem{AS2} S. Al-shakarchi  and I. Doust,
   \textit{$AC(\sigma)$ spaces for polygonally inscribed curves},
   \journalname{Banach J. Math. Anal.} \textbf{15}, 31 (2021).

\bibitem{AS3} S. Al-shakarchi  and I. Doust,
   \textit{Isomorphisms of $BV(\sigma)$ spaces},
   \journalname{Oper. Matrices}
    \textbf{15} (2021), 1581--1596.

\bibitem{Ash} B. Ashton,
     Functions of Bounded Variation in Two Variables and $AC(\sigma)$ Operators,
     PhD Thesis, University of New South Wales, 2000.

\bibitem{AD1} B. Ashton and I. Doust,
    \textit{Functions of bounded variation on compact subsets
        of the plane},
    \journalname{Studia Math.}
    \textbf{169} (2005), 163--188.

\bibitem{AD2} B. Ashton and I. Doust,
    \textit{A comparison of algebras of functions of bounded variation},
    \journalname{Proc. Edinb. Math. Soc. (2)}
     \textbf{49} (2006), 575--591.

\bibitem{AD3} B. Ashton and I. Doust,
    \textit{$\AC(\sigma)$ operators},
    \journalname{J. Operator Theory} \textbf{65} (2011), 255--279.


 \bibitem{BG1} E. Berkson and T. A. Gillespie,
 \textit{Absolutely continuous functions of two vairables and well-bounded operators},
 \journalname{J. London Math. Soc. (2)} \textbf{30} (1984), 305--321.

\bibitem{BG2} E. Berkson and T. A. Gillespie,
   \textit{AC functions on the circle and spectral families},
   \journalname{J. Operator Theory} \textbf{13} (1985), 33--47.

\bibitem{BG3} E. Berkson and T. A. Gillespie,
   \textit{Ste{\v{c}}kin's Theorem, transference, and spectral decompositions},
   \journalname{J. Funct. Anal.}
   \textbf{70} (1987), 140--170.



\bibitem{BGD} E. Berkson, T. A. Gillespie and I. Doust,
   \textit{Properties of $AC$-operators},
   \journalname{Acta Sci. Math. (Szeged)}
   \textbf{37} (1997), 249--271.


\bibitem{BGM} E. Berkson, T. A. Gillespie and P. S. Muhly,
  \textit{Abstract spectral decompositions guaranteed by the Hilbert transform},
  \journalname{Proc. London Math. Soc. (3)}
   \textbf{53} (1986), 489--517.


\bibitem{CA} J. A. Clarkson and C. R. Adams,
  \textit{On definitions of bounded variation for functions of two variables},
  \journalname{Trans. Amer. Math. Soc.}
  \textbf{35} (1933), 824--854.

\bibitem{CD} Q. Cheng and I. Doust,
  \textit{Compact well-bounded operators},
  \journalname{Glasg. Math. J.}
  \textbf{43} (2001) 467--475.

\bibitem{CF} I. Colojoar\v{a} and C. Foia\c{s}, \textit{Theory of Generalized Spectral Operators}, Gordon and Breach, New York-London-Paris 1968.

\bibitem{DD} H. G. Dales and A. M. Davie,
  \textit{Quasianalytic Banach function algebras},
  \journalname{J. Funct. Anal.} \textbf{13} (1973), 28--50.

\bibitem{DA} I. Doust and S. Al-shakarchi,
  \textit{Isomorphisms of $AC(\sigma)$ spaces for countable sets},
    In: B{\"o}ttcher A., Potts D., Stollmann P., Wenzel D. (eds),
    The Diversity and Beauty of Applied Operator Theory.
    Operator Theory: Advances and Applications, \textbf{268} (2018), pp 193--206. Birkh{\"a}user, Cham.

\bibitem{DL} I. Doust and M. Leinert,
   \textit{Isomorphisms of $AC(\sigma)$ spaces},
   \journalname{Studia Math.} \textbf{228} (2015), 7--31.

\bibitem{DLS2} I. Doust, M. Leinert and A. Stoneham,
  \textit{Approximation in $AC(\sigma)$ and $BV(\sigma)$},
  ArXiV: xxxx.

\bibitem{Fol} G. B. Folland,
Real analysis, Modern techniques and their applications.
Pure and Applied Mathematics (New York), John Wiley and Sons, New York, 1984.

\bibitem{FLW} L. Frerick, L. Looseveldt and J. Wengenroth,
  \textit{Continuously differentiable functions on compact sets},
  \journalname{Results Math.} \textbf{75} (2020),
  Paper No.~177.




\bibitem{NM} F. Nakamura and M. C. Mackey,
  \textit{Asymptotic (statistical) periodicity in two-dimensional maps},
  \journalname{Discrete Contin. Dyn. Syst. Ser. B},
 dx.doi.org/10.3934/dcdsb.2021227.


\bibitem{Rin1} J. R. Ringrose,
   \textit{On well-bounded operators},
   \journalname{J. Austral.  Math. Soc.} \textbf{1} (1960), 334--343.

\bibitem{Rin2} J. R. Ringrose,
   \textit{On well-bounded operators II},
   \journalname{Proc. London. Math. Soc. (3)} \textbf{13} (1963), 613--638.

\bibitem{S} S. Saks,
     Theory of the integral, 2nd Revised Ed., Hafner, New York, 1937.

\bibitem{Sm} D. R. Smart,
    \textit{Conditionally convergent spectral expansions},
    \journalname{J. Austral. Math. Soc.} \textbf{1} (1959/1960), 319--333.

\bibitem{W} H. Whitney,
   \textit{Analytic extensions of differentiable functions defined in closed sets},
   \journalname{Trans. Amer. Math. Soc.} \textbf{36} (1934),  63--89.

\bibitem{Wil} J. Wilson,
   \textit{The relationship between polar and $AC$ operators},
   Glasg. Math. J. \textbf{41} (1999), 431--439.





\end{thebibliography}

\end{document}